\setlist[itemize]{topsep=0ex,itemsep=0ex,parsep=0.4ex}
\setlist[enumerate]{topsep=0ex,itemsep=0.1ex,parsep=0.4ex}
\newtheorem{theorem}{Theorem}[section]
\newtheorem{definition}[theorem]{Definition}
\newtheorem{proposition}[theorem]{Proposition}
\newtheorem{conjecture}[theorem]{Conjecture}
\newtheorem{lemma}[theorem]{Lemma}
\newtheorem{observation}[theorem]{Observation}
\newtheorem{fact}[theorem]{Fact}
\newtheorem*{remarks*}{Remarks}
\newtheorem*{remark*}{Remark}
\newtheorem{claim}[theorem]{Claim}
\crefname{equation}{}{}
\crefname{item}{}{}
\crefname{item}{}{}
\numberwithin{equation}{section}
\newenvironment{proofclaim}[1][Proof of claim]{\begin{proof}[#1]}{\end{proof}}
\newcommand{\eps}{\varepsilon}
\newcommand{\N}{\mathbb{N}}
\newcommand{\taux}{T_{\text{aux}}}
\newcommand{\var}{h}
\renewcommand{\geq}{\geqslant}
\renewcommand{\leq}{\leqslant}
\DeclarePairedDelimiter{\ceil}{\lceil}{\rceil}
\newcommand{\defn}[1]{\textcolor{red!60!black}{\emph{#1}}}
\DeclareMathOperator{\gs}{gs}
\title{On the gracesize of trees}
\author{Shoham Letzter\thanks{Department of Mathematics, University College London, Gower Street, WC1E 6BT, UK. Emails: \tt{\{\href{mailto:s.letzter@ucl.ac.uk}{s.letzter},\href{mailto:a.pokrovskiy@ucl.ac.uk}{a.pokrovskiy},\href{mailto:ella.williams.23@ucl.ac.uk}{ella. williams.23}\}@ucl.ac.uk}.} \thanks{Research supported by the Royal Society.} \and Alexey Pokrovskiy\footnotemark[1] \and Ella Williams\footnotemark[1] \thanks{Research supported by the Martingale Foundation.}}
\date{}
\begin{document}

\setlength{\baselineskip}{14.8pt}

\maketitle

\begin{abstract}

\setlength{\baselineskip}{13pt}
    \setlength{\parskip}{\medskipamount}
    \setlength{\parindent}{0pt}
    \noindent
    
An $n$-vertex tree $T$ is said to be \textit{graceful} if there exists a bijective labelling $\phi:V(T)\to \{1,\ldots,n\}$ such that the edge-differences $\{|\phi(x)-\phi(y)| : xy\in E(T)\}$ are pairwise distinct. The longstanding graceful tree conjecture, posed by R\'{o}sa in the 1960s, asserts that every tree is graceful.
The \textit{gracesize} of an $n$-vertex tree $T$, denoted $\gs(T)$, is the maximum possible number of distinct edge-differences over all bijective labellings $\phi:V(T)\to \{1,\ldots,n\}$. The graceful tree conjecture is therefore equivalent to the statement that $\gs(T)=n-1$ for all $n$-vertex trees.

We prove an asymptotic version of this conjecture by showing that for every $\varepsilon>0$, there exists $n_0$ such that every tree on $n>n_0$ vertices satisfies $\gs(T)\ge (1-\varepsilon)n$. In other words, every sufficiently large tree admits an almost graceful labelling.
\end{abstract}

\section{Introduction}
 Within extremal combinatorics, a common question to pose is ``which properties must a host graph possess in order to guarantee that it contains a given subgraph?'', with classical results such as theorems of Mantel \cite{Mantel} and Dirac \cite{dirac1952some} providing sufficient conditions for the existence of triangles and spanning cycles respectively. Colouring analogues of these problems have been extensively studied, most notably in Ramsey theory, which investigates the appearance of monochromatic subgraphs in edge-coloured graphs. A complementary perspective is given by anti-Ramsey theory, initiated by Erd\H{o}s, Simonovits, and S\'{o}s \cite{Erdos1975antiramsey}, which concerns the existence of rainbow subgraphs, in which all edges must receive distinct colours. 
In this paper, we use this colouring framework to study a classical labelling problem in combinatorics: the graceful tree conjecture. Specifically, we tackle the problem by reformulating the conjecture in terms of finding rainbow trees in appropriately edge-coloured complete graphs, connecting graceful labellings to tools from extremal and probabilistic graph theory.

     A \defn{labelling} of a graph $G$ on $m$ edges is an injective mapping $\phi : V(G) \rightarrow \mathbb{N}$, and we say $\phi$ is \defn{graceful} if its image is $\{1,\ldots,m+1\}$ and the values of $|\phi(x) - \phi(y)|$ are pairwise distinct over all edges $xy \in E(G)$.  A graph is said to be \defn{graceful} if it admits a graceful labelling.
Graceful labellings were first introduced by R\'{o}sa \cite{rosa1966} in 1966 (as $\beta$-valuations) and later renamed by Golomb \cite{GOLOMB197223}. Given a labelling $\phi$, we call the values of $|\phi(x) - \phi(y)|$ for $xy \in E(G)$ the \defn{edge-differences} of $\phi$. The main theme of research in this area is to determine which graphs are graceful. Since any graceful graph on $n$ vertices necessarily satisfies $e(G)\leq n-1$, almost all graphs are not graceful simply because they are too dense, as noted by Erd\H{o}s (unpublished, see \cite{GOLOMB197223,graham_sloane1980}). Trees are an interesting class of graphs to consider, since the set of edge-differences of any graceful labelling $\phi$ of an $n$-vertex tree must exactly coincide with the set $\{1,\dots,n-1\}$.
This resulted in the graceful tree conjecture, posed in the 1960s, and usually accredited to R\'{o}sa.
\begin{conjecture}[Graceful tree conjecture]\label{conj:gtc}
    Every tree admits a graceful labelling.
\end{conjecture}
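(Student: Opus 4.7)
My plan is to bootstrap the asymptotic theorem stated in the abstract into the full conjecture via an absorption argument in the rainbow-tree reformulation. Colour each edge $ij$ of $K_n$ by $|i-j|$; then a graceful labelling of an $n$-vertex tree $T$ is precisely a rainbow copy of $T$ in this edge-coloured $K_n$. The asymptotic result produces an embedding that is rainbow on all but $\varepsilon n$ edges, so the task reduces to removing these final $O(\varepsilon n)$ defects, and a natural mechanism for this is absorption.

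First I would split on the structure of $T$. For trees in which a linear fraction of the edges are leaf-edges attached to a bounded-degree skeleton, the known constructive toolkit for caterpillars, spiders and related families should suffice: label the skeleton gracefully by existing methods, then greedily assign the remaining labels to leaves to avoid conflicts. The substantive case is when $T$ has mostly internal edges. There, I would identify a small absorber sub-tree $A \subseteq T$ together with a reserved label pool $L \subseteq \{1,\dots,n\}$, and apply the asymptotic theorem to embed $T \setminus A$ into $\{1,\dots,n\} \setminus L$ using at most $\varepsilon n$ repeated differences. The absorber would then be embedded into $L$, the embedding chosen from a sufficiently large pre-approved family of graceful extensions so that the differences it realises exactly fill in the set of missing ones.

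The central step, and the main obstacle, is proving that such a universal absorber exists. Concretely, one needs an absorber-existence lemma saying that every tree with many internal edges contains a sub-tree $A$ admitting a rich enough family of graceful embeddings into generic label pools that it can achieve any prescribed sub-multiset of the unused differences of the correct size. Unlike in the Hamilton cycle setting, here the host carries rigid arithmetic structure — each colour class is essentially a near-matching and the classes are arithmetically correlated across scales — while the guest tree is adversarial, so one cannot freely re-route edges to repair a conflict without disturbing many others. It is genuinely unclear that every tree contains such a flexible sub-structure, and producing one would likely require a structure theorem localising the defects of any almost-graceful embedding onto a controllable sub-tree, or an entirely new algebraic invariant on trees. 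This is the reason the conjecture has resisted proof for over half a century despite the approximate version being within reach; any genuine resolution will have to find a new source of flexibility beyond what absorption in the standard sense provides.
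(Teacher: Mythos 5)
You have not produced a proof, and you should be clear-eyed that the paper does not contain one either: \cref{conj:gtc} is stated as an open conjecture, and the paper's contribution is precisely the asymptotic statement \cref{thm:gracesize}, not the full conjecture. So there is no argument in the paper to compare yours against; any complete proof here would be a major new result.

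The genuine gap in your proposal is the one you yourself flag: the absorber-existence lemma. Everything hinges on finding, inside an \emph{arbitrary} tree $T$, a subtree $A$ and a label pool $L$ such that $A$ admits graceful embeddings into $L$ realising any prescribed set of $O(\varepsilon n)$ missing differences. You give no construction of such an $A$, no candidate family of flexible embeddings, and no argument that the rigid arithmetic structure of the difference colouring (each colour class being a near-perfect matching with strong correlations across colours) permits this kind of local repair. Since the guest tree is adversarial, one cannot even guarantee that $T$ contains any subtree with useful combinatorial slack -- consider, say, a spider with a few very long legs, where every subtree is a union of paths and the achievable difference sets of path embeddings into a fixed pool are highly constrained. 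Your first case is also not closed: ``label the skeleton gracefully by existing methods'' assumes a graceful labelling of an arbitrary bounded-degree tree, which is itself open (and is exactly the regime where \cref{thm:AAGH} gives only a range-relaxed labelling). In short, the proposal correctly identifies the rainbow reformulation (\cref{conj:gracefulrainbow}) and correctly diagnoses why absorption is hard here, but it reduces the conjecture to an unproved and possibly false lemma, so it does not advance beyond the asymptotic result the paper actually establishes.
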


Graceful labellings were initially motivated as a method for proving Ringel's conjecture \cite{Ringel1963}, a notable conjecture from 1963 which proposed that for every $(n+1)$-vertex tree $T$, the complete graph $K_{2n+1}$ can be decomposed into edge-disjoint copies of $T$.  \Cref{conj:gtc} implies a stronger variant of Ringel's conjecture, attributed to Kotzig, suggesting that this decomposition can be done by cyclically shifting the copies of the tree. 
Indeed, if there exists a graceful labelling of an $(n+1)$-vertex tree $T$, say $\phi:V(T) \rightarrow \{1,\ldots,n+1\}$, then one can construct an embedding of $T$ in $K_{2n+1}$ using only the first $n+1$ vertices, by embedding each vertex of $T$ to its image under $\phi$. Cyclically shifting this copy of $T$ $2n+1$ times by adding $1$~modulo~$2n+1$ to each label at each shift, we get $2n+1$ copies of $T$, which can be seen to be pairwise edge-disjoint, and which cover all edges of $K_{2n+1}$ using the properties of the graceful labelling $\phi$. Ringel's conjecture (and its stronger variant) was recently proven for all sufficiently large $n$ by Montgomery, Pokrovskiy and Sudakov \cite{montgomery2021ringel} and independently Keevash and Staden \cite{keevash2025ringel}. The strength of the graceful tree conjecture illustrates that it stands to be a challenging and interesting problem in itself. 

Over the years, research towards \cref{conj:gtc} has been fruitful, yet in full generality it remains open. Despite a resolution for certain classes of trees (see e.g.\ Table 1 of \cite{gallianlabellingsurvey}), there seems to be much more difficulty in proving broader statements.
The best known general result about \cref{conj:gtc} due to Adamaszek, Allen, Grosu and Hladk\'{y} considers a variant of graceful labellings where a small number of additional labels are allowed. Using terminology of Van Bussel \cite{VanBussel2002}, given $m>n$ and an $n$-vertex tree $T$, an injective mapping $\phi:V(T)\rightarrow \{1,\ldots, m\}$ for which the edge-differences of $\phi$ are pairwise distinct is called a \textit{range-relaxed graceful labelling} of $T$. In 2020, they proved the following notable result.
\begin{theorem}[Adamaszek, Allen, Grosu and Hladk\'{y} \cite{adamaszek2020almost}] \label{thm:AAGH}
     For all $\eps> 0$ there exist $\eta, n_0 > 0$ such that for all $n>n_0$, the following holds.
     If $T$ is an $n$-vertex tree and $\Delta(T) \leq \frac{\eta n}{\log n}$, then there exists a range-relaxed graceful labelling $\phi : V(T) \rightarrow \{1,\ldots,(1 + \varepsilon)n\}$.
\end{theorem}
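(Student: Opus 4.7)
The plan is to recast the problem as a rainbow embedding question. Set $N := \lceil(1+\eps)n\rceil + 1$ and colour edge $\{i,j\}$ of $K_N$ by $|i-j|$; then a range-relaxed graceful labelling $\phi:V(T)\to[N]$ is exactly a rainbow copy of $T$ in this edge-coloured $K_N$ (each difference appearing at most once), so the task becomes finding such a rainbow copy.

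To build this copy, I would root $T$ at a vertex $r$ and list its vertices $v_1 = r, v_2, \dots, v_n$ in BFS order, so that for each $i\geq 2$ the vertex $v_i$ has a unique already-embedded neighbour $p(v_i)$. The vertices can then be embedded one at a time: for each $i\geq 2$, pick $\phi(v_i)$ at random from the set of labels $x\in[N]$ that are still unused and satisfy $|x-\phi(p(v_i))|\notin D_i$, where $D_i$ is the set of already-used edge-differences. A naive count gives that after $t$ steps the number of forbidden labels is about $3t$, which saturates the budget $(1+\eps)n$, so the distribution over candidates cannot be uniform over all valid labels; it must instead be restricted to a window around $\phi(p(v_i))$ whose length depends on how many siblings of $v_i$ remain to be placed, in order to keep an appropriate density of free labels and free colours throughout the process.

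The main obstacle will be handling the final $o(n)$ vertices and the children of high-degree vertices. I would tackle both by an absorption argument: reserve a small pool $R\subseteq[N]$ of roughly $\eps n/2$ labels and identify in advance a substructure $A\subseteq T$ (for example a long bare path, or a well-distributed collection of leaves) with the property that, for any plausible leftover of the random stage of size at most $|R|$, the vertices of $A$ together with labels in $R$ admit a rainbow completion of the embedding. Such an absorber can be produced using the standard structural fact that every tree contains either a long bare path or a large set of leaves sharing few parents. The degree hypothesis $\Delta(T)\leq\eta n/\log n$ enters in the concentration analysis of the random stage: via martingale concentration (e.g.\ Azuma--Hoeffding) applied to the sequence of random choices, one needs the number of usable labels and usable colours at each step to be tightly concentrated, and the $\log n$ factor is precisely what guarantees that processing all children of a single maximum-degree vertex does not make the candidate sets fluctuate too much. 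I expect proving this concentration, particularly near the end of the process where the pool of free labels is smallest, to be the main technical difficulty.
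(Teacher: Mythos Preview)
The paper does not prove this theorem. Theorem~\ref{thm:AAGH} is quoted as a known result of Adamaszek, Allen, Grosu and Hladk\'{y}~\cite{adamaszek2020almost} and is used only as context and motivation; the paper's own contribution is \cref{thm:gracesize} (and its technical core \cref{maintheorem}), which has no maximum-degree hypothesis and is proved by an entirely different route (tree splitting via \cref{lem:structure4}, then rainbow embedding of an auxiliary contracted tree via \cref{lem:onesmallcompsv2} and \cref{lem:TminusJrooted_comps}). There is therefore no paper proof of \cref{thm:AAGH} to compare your proposal against.

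As for the proposal itself: it is a plan rather than a proof. The high-level ingredients you list --- rainbow reformulation, random sequential embedding in a degeneracy order, an absorber built from bare paths or well-spread leaves, and martingale concentration using the degree bound --- are indeed the ingredients of the original argument in~\cite{adamaszek2020almost}, so the outline is on the right track. But every substantive step is deferred: you do not specify the distribution over candidate labels, do not construct the absorber or verify its key property, and explicitly flag the concentration analysis as the ``main technical difficulty'' without attempting it. In particular, the claim that restricting to a window ``whose length depends on how many siblings remain'' keeps the right density of free labels and colours is exactly the heart of the matter, and nothing in the proposal substantiates it. So while the strategy is broadly correct, what you have written does not constitute a proof.
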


Alternatively, one could relax the notion of graceful labellings by allowing a small number of edge-differences to coincide, and we follow this approach. Introduced by
Erd\H{o}s, Hell and Winkler \cite{ERDOS1988117}, and denoted by \defn{$\gs(T)$}, the \defn{gracesize} of an $n$-vertex tree $T$ is the maximum size of the set of edge-differences over all bijective labellings $\phi : V(T) \rightarrow \{1,\ldots,n\}$. 
This was mentioned as a dual notion to the bandsize of a tree, 
which instead considers the minimum size of this set. Gracesize naturally relates to the graceful tree conjecture since such a labelling $\phi$ of $T$ is graceful if and only if the set of its edge-differences has size $|E(T)| = n-1$. Therefore, one can restate the graceful tree conjecture as: every $n$-vertex tree has gracesize $n-1$. 
It was suggested in \cite{ERDOS1988117} that it may be an interesting problem to find non-trivial lower bounds on the gracesize. 

In 1995, R\'osa and Širáň \cite{rosa_siran} proved that $\gs(T) \geq 5(n-1)/7$ for every $n$-vertex tree $T$. Without too much difficulty one can show that $\gs(T) = (1-o(1))n$ for all trees with maximum degree $o(n/\log n)$ as a corollary of \cref{thm:AAGH}\footnote{ 
One can argue similarly to our deduction of \cref{thm:gracesize} from \cref{lem:mainGL}.}. The main result of the paper confirms this bound on the gracesize of all sufficiently large trees, irrespective of maximum degree.
\begin{theorem}\label{thm:gracesize}
    For all $\eps> 0$ there exists $n_0$ such that for all $n>n_0$, the following holds. If $T$ is an $n$-vertex tree then 
    $\gs(T)\geq (1-\varepsilon)n$.
\end{theorem}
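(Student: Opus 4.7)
The plan is to follow the strategy hinted at in the excerpt: first strengthen \cref{thm:AAGH} by removing its maximum-degree hypothesis, and then deduce \cref{thm:gracesize} from that strengthening. Concretely, I would aim to establish a \emph{main labelling lemma}: for every $\eps > 0$ there exists $n_0$ such that every tree on $n \ge n_0$ vertices admits a range-relaxed graceful labelling $\phi : V(T) \to \{1, \ldots, (1+\eps)n\}$, with no restriction on $\Delta(T)$.

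Granted this lemma, the reduction to \cref{thm:gracesize} is short. Set $m = \lceil \eps n / 3 \rceil$. If $T$ has at least $m$ leaves, remove any $m$ of them to form $T'$ on $n - m$ vertices and apply the main labelling lemma to $T'$ with parameter $\eps/3$; the resulting labelling has image contained in $\{1, \ldots, (1 + \eps/3)(n - m)\} \subseteq \{1, \ldots, n\}$, since $(1+\eps/3)(1-\eps/3) < 1$. Extend to a bijection $\phi : V(T) \to \{1, \ldots, n\}$ by assigning the unused $m$ labels to the removed leaves arbitrarily. The $n - 1 - m$ edges of $T'$ inherit pairwise distinct differences, yielding $\gs(T) \ge n - 1 - m \ge (1 - \eps)n$. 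If instead $T$ has fewer than $m$ leaves, then the identity $\sum_{v \in V(T)} (\deg v - 2) = -2$ forces at least $n - 2m$ vertices of $T$ to have degree exactly $2$; pick $m$ such vertices and contract each (replacing the vertex by an edge between its two neighbours), producing a tree $T'$ on $n - m$ vertices. Apply the main lemma as before and afterwards uncontract, assigning each contracted vertex one of the remaining labels. The $n - 2m - 1$ edges of $T$ that were not involved in any contraction carry pairwise distinct differences, so $\gs(T) \ge n - 2m - 1 \ge (1 - \eps)n$.

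The main task, and the main obstacle, is proving the main labelling lemma. The AAGH proof embeds vertices one at a time via a random/greedy scheme; the max-degree bound controls the number of ``forbidden'' labels (those producing a repeated edge-difference) at each step, and this control collapses when $\Delta(T)$ is large. The natural approach is: (i) isolate the set $H$ of vertices of degree at least $D := \eta n / \log n$; by double-counting edges, $|H| = O(\log n / \eta)$. (ii) Pre-embed $H$ together with its immediate neighbourhood ``by hand'', choosing labels carefully so that all edges incident to $H$ receive pairwise distinct differences, and so that a large, well-structured set of free labels remains (for instance by placing $H$-vertices near the two endpoints of $\{1, \ldots, (1+\eps)n\}$ to generate large, spread-out differences). (iii) Label the forest $T - H$, which has maximum degree below $D$, by an AAGH-style probabilistic embedding restricted to the free labels and avoiding the differences already committed in step (ii).

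The hardest step is (ii). The vertices of $H$ are responsible for a substantial fraction of the edges of $T$ and must be labelled so as to simultaneously yield pairwise distinct incident differences, leave a residual labelling problem solvable by the low-max-degree random machinery, and tolerate the small errors that the random phase is bound to introduce. I expect that a flexible absorption or rotation mechanism, in the spirit of the tools developed by Montgomery, Pokrovskiy and Sudakov for their proof of Ringel's conjecture, will be required, adapted here to the ordering of integer differences on $\{1, \ldots, (1+\eps)n\}$ rather than to cyclic shifts on $\mathbb{Z}/(2n+1)\mathbb{Z}$.
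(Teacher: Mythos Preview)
Your reduction from the main labelling lemma to \cref{thm:gracesize} is correct, though more elaborate than needed: the paper simply peels off $\eps n/2$ leaves \emph{one at a time} (so a new leaf always exists), applies the main lemma to the remaining subtree, and extends arbitrarily. Your two-case split (many leaves vs.\ many degree-$2$ vertices) works but is unnecessary.

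The substantive gap is in the main labelling lemma itself. You are aiming for a genuine range-relaxed graceful labelling of every tree into $[(1+\eps)n]$, i.e.\ \emph{all} edge-differences distinct. The paper does not prove this; in fact it explicitly lists it as a natural next step beyond the present work. What the paper actually establishes (\cref{lem:mainGL}) is the weaker statement that every large tree has an injection into $[(1+\eps)n]$ with at least $(1-\eps)n$ distinct edge-differences. This relaxation is exactly what makes the high-degree case tractable: one isolates a small set $S_{\text{high}}$ of high-degree vertices together with a small ``waste'' set $W$ of low-degree vertices (via a structural tree-splitting lemma), embeds $T\setminus W$ in a fully rainbow fashion using an auxiliary-tree blow-up argument, and then places $W$ arbitrarily. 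The low degree of vertices in $W$ guarantees that only $o(n)$ edge-differences can be spoiled in this last step. Your proposed step (ii), pre-embedding $H$ and its neighbourhood so that \emph{every} incident edge receives a distinct difference while leaving a residual problem amenable to AAGH, is precisely the obstacle the paper sidesteps by permitting a small error; your sketch acknowledges this is the hard part and does not supply an argument.

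In short: the reduction is fine, but the lemma you target is stronger than what is known, and your outline for it is a strategy rather than a proof. The paper's route is to weaken the target lemma just enough that a waste-set argument handles the high-degree vertices.
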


\subsection{Reduction argument}
Before reformulating the problem to be about edge-coloured graphs, we first show that in order to prove \cref{thm:gracesize}, it is enough to verify the following lemma, which is another approximately graceful result. Similar to the style of \cref{thm:AAGH}, we consider an extra relaxation on the number of labels used.
\begin{lemma}\label{lem:mainGL}
   For all $\eps> 0$ there exists $N$ such that for all $n>N$ the following holds.
    If $T$ is an $n$-vertex tree then there exists an injective mapping $\phi:V(T) \rightarrow \{1,\ldots,(1 + \varepsilon)n\}$ 
    such that the set of edge-differences of $\phi$ has size at least $(1-\eps)n$.
\end{lemma}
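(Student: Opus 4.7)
The plan is to reduce \cref{lem:mainGL} to \cref{thm:AAGH}, which handles trees of max degree $o(n/\log n)$. Fix $D := \eta n/\log n$, with $\eta = \eta(\varepsilon)>0$ small enough that \cref{thm:AAGH} applied with parameter $\varepsilon/3$ produces a range-relaxed graceful labelling for any tree of max degree at most $D$. When $\Delta(T) \leq D$, invoke \cref{thm:AAGH} directly: the resulting labelling uses labels in $\{1,\ldots,(1+\varepsilon)n\}$ and has all $n-1$ edge-differences distinct, so the set of edge-differences has size $n-1 \geq (1-\varepsilon)n$ and we are done. The substantive case is therefore $\Delta(T) > D$.

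Let $V_H := \{v \in V(T) : \deg_T(v) > D\}$; the degree sum $\sum_v \deg_T(v) = 2(n-1)$ gives $|V_H| \leq 2n/D = O(\log n/\eta)$, so $V_H$ is very small. The aim is then to decompose $T = T^* \cup R$ into a subtree $T^*$ with $\Delta(T^*) \leq D$ and a set $R \subseteq V(T)$ of at most $\varepsilon n/2$ removed vertices, forming disjoint ``bushes'' each attached to $T^*$ by a single edge. Once this is done, \cref{thm:AAGH} applied to $T^*$ with parameter $\varepsilon/3$ produces a labelling $\phi^*\colon V(T^*)\to\{1,\ldots,(1+\varepsilon/3)|T^*|\}\subseteq\{1,\ldots,(1+\varepsilon)n\}$ with all edge-differences on $T^*$ distinct. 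I would extend $\phi^*$ to $R$ by processing each bush in BFS order from its attachment point: for each new vertex $w$ with parent $u$ already labelled, pick an unused label $x$ so that $|x-\phi(u)|$ is not yet an edge-difference, whenever possible. The pool of unused labels has size at least $\varepsilon n/2 + |R|$, while at each step only $O(n)$ labels are forbidden; a careful greedy or bipartite-matching argument should then bound the total number of edge-difference collisions by $\varepsilon n/2$, yielding at least $n-1-\varepsilon n/2 \geq (1-\varepsilon)n$ distinct edge-differences, as required.

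The \textbf{main obstacle} is producing the decomposition with $|R| \leq \varepsilon n/2$. A naive stripping — at each $v \in V_H$, remove the $d_v - D$ smallest branches — costs up to $\Theta(n \ln(d_v/D))$ vertices per high-degree vertex, using the bound $b_k \leq (n-1)/(d_v-k+1)$ on the $k$-th smallest branch. Summed over $V_H$ this can far exceed $\varepsilon n/2$, and for star-like trees (for instance $K_{1,n-1}$) the procedure strips away nearly the whole tree — even though such trees are themselves graceful and need no stripping. This suggests that a more delicate hybrid strategy is needed: some high-degree vertices should be stripped, while others are handled in place by reserving a window of consecutive labels around $\phi^*(v)$ for their neighbours, mimicking the graceful labelling of $K_{1,d}$ that sends $v \mapsto a$ and its leaves to $a+1,\ldots,a+d$. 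Coordinating stripping, windowing, and the greedy extension so that $|R|$, the number of collisions, and the total label range are simultaneously controlled is the heart of the argument, and is where the new work over \cref{thm:AAGH} lies.
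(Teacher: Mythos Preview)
Your proposal is not a proof but a plan with an acknowledged hole, and the hole is the entire content of the lemma. You correctly observe that reducing to \cref{thm:AAGH} requires a decomposition $T = T^* \cup R$ with $\Delta(T^*) \leq D$ and $|R| \leq \varepsilon n/2$, and you correctly note that this decomposition simply fails for star-like trees: for $K_{1,n-1}$ any connected subtree of maximum degree $D$ has at most $D+1$ vertices. Your suggested fix --- ``windowing'' some high-degree vertices by reserving contiguous label blocks for their neighbours --- is left entirely unspecified: you do not say how to allocate windows, how to prevent windows from overlapping with each other or with the labels used by \cref{thm:AAGH}, or how to handle vertices that must lie in several windows at once. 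Everything non-trivial is deferred to the phrase ``is the heart of the argument''. (Incidentally, your greedy extension step is unnecessary: if $T^*$ is a subtree with $|R|\leq \varepsilon n/2$ then exactly $|R|$ edges of $T$ lie outside $T^*$, so labelling $R$ arbitrarily already gives at least $|T^*|-1 \geq (1-\varepsilon)n$ distinct differences.)

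The paper does not use \cref{thm:AAGH} at all. Instead it proves the equivalent \cref{maintheorem} directly, and its treatment of high-degree vertices is the genuinely new ingredient. The structural step (\cref{lem:structure4}) removes a small set $W$ of low-degree ``waste'' vertices so that $T\setminus(S_{\text{high}}\cup W)$ is exactly $\zeta n$ copies of a bounded-size forest $F$, each attached to $S_{\text{high}}$ only at its root. The embedding step (\cref{lem:TminusJrooted_comps}) then contracts $S_{\text{high}}$ to a single vertex and groups the copies of $F$ to form a small auxiliary tree $T_{\text{aux}}$; a rainbow copy of $T_{\text{aux}}$ is found via hypergraph matchings (\cref{lem:onesmallcompsv2}), and this is ``blown up'' to a rainbow copy of $T\setminus W$ by replacing each vertex of $T_{\text{aux}}$ by an interval of labels and each edge by a rainbow perfect matching between intervals (\cref{prop:matchingintervals}). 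The high-degree vertices in $S_{\text{high}}$ are all embedded into a single short interval $I_0'$, and the edges to their many neighbours are made rainbow by sorting the neighbours against the labels in $S_{\text{high}}$ --- this is the precise, global version of the ``windowing'' intuition you gesture at, but it requires the whole auxiliary-tree framework to execute.
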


\begin{proof}[Proof of \cref{thm:gracesize} from \cref{lem:mainGL}]
    We can assume $\varepsilon<1$ as else the result is trivial, and let $n_0$ be sufficiently large so that for all $n>n_0$, the value of $(1-\eps/2)n$ is at least as large as the output $N$ of \cref{lem:mainGL} when applied with $\eps/2$. Let $T$ be an $n$-vertex tree. One-by-one, for $\varepsilon n / 2$ steps, remove a leaf from the current subtree of $T$ that remains, to obtain a subtree $T^* $ on $n^* \coloneqq (1-\varepsilon/2)n$ vertices. Apply \cref{lem:mainGL} to $T^*$ with $\varepsilon/2$ playing the role of $\eps$ to obtain an injective mapping $\phi^*:V(T^*)\rightarrow \{1,\ldots, (1+\varepsilon/2)n^*\}$ such that the set of edge-differences of $\phi^*$ has size at least $(1-\eps/2)n^*$. Since $(1+\varepsilon/2)n^* \leq n$ then from this we can define a bijective labelling $\phi:V(T) \rightarrow \{1,\ldots, n\}$ such that $\phi(x) = \phi^*(x)$ for all $x \in V(T^*)$ and the vertices in $V(T) \setminus V(T^*)$ are bijectively assigned a label from the set of unused labels in $[n]$. Since $E(T^*)\subseteq E(T)$ and $|\phi^*(x)-\phi^*(y)|=|\phi(x)-\phi(y)|$ for all $xy\in E(T^*)$, the set of edge-differences of $\phi $ is at least as large as the set of edge differences of $\phi^*$. Therefore $\gs(T)\geq (1-\eps/2)n^* \geq (1-\eps)n$, as desired.
\end{proof}

We remark that the relaxation given in \cref{thm:gracesize} is a slightly weaker notion than that of a range-relaxed graceful labelling in \cref{thm:AAGH}, in the sense that if a tree has a range-relaxed graceful labelling using $(1+o(1))|T|$ labels, then it has gracesize $(1-o(1))|T|$ (this can be seen via the reduction used in the proof that \cref{lem:mainGL} implies \cref{thm:gracesize}), while there is no obvious argument that gracesize $(1 - o(1))|T|$ implies the existence of a range-relaxed graceful labelling using $(1 + o(1))|T|$ labels. On the other hand, unlike \cref{thm:AAGH}, \cref{thm:gracesize} holds for all large trees, including those with high degree vertices.

Combining these two directions, a natural next step towards proving \cref{conj:gtc} would be to show that every tree $T$ admits a range-relaxed graceful labelling into $[(1+o(1))|T|]$, either via a reduction from gracesize, or by an independent argument.

\subsection{Rainbow reformulation}

Recall that we are interested in using edge-coloured graphs to study the graceful tree conjecture. To do this, we can define the difference coloured complete graph, as follows.

\begin{definition}\label{defn:differencecomplete}
 Let $X \subseteq \mathbb{N}$. The difference coloured complete graph for $X$, denoted by $K_{X}$, is the edge-coloured complete graph on vertex set $X$, where edge $ij$ is assigned colour $|i-j|$ for all distinct $i,j \in X$. 
\end{definition}

Consider the case when $X = [n] = \{1,\ldots,n\}$.
We can ask whether $K_{[n]}$ contains a rainbow copy of an $n$-vertex tree, meaning that every edge of the tree has a distinct colour. Given a tree $T$ and an injective function $\phi: V(T) \rightarrow [n]$, an embedding of $T$ in $K_{[n]}$ is uniquely determined from $\phi$ by embedding a given vertex $x$ to $\phi(x) \in V(K_{[n]})$. Note that an edge $xy \in E(T)$ has colour $c$ under this embedding if and only if $ c = |\phi(x) - \phi(y)|$. Similarly an embedding of $T$ uniquely determines such an injective function. 
In particular if $T$ has $n$ vertices, then observe that $\phi$ is graceful if and only if the corresponding embedding of $T$ is rainbow.
Therefore we can use $K_{[n]}$ to consider the following equivalent version of the graceful tree conjecture.

\begin{conjecture}[Rainbow version of the graceful tree conjecture]\label{conj:gracefulrainbow}
    $K_{[n]}$ contains a rainbow copy of every $n$-vertex tree $T$.
\end{conjecture}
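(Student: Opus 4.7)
The plan is to bootstrap from the near-graceful labellings of \cref{lem:mainGL} to an exact rainbow embedding via an absorption-type correction. First observe that in $K_{[n]}$, colour $c$ appears on exactly $n-c$ edges (the edges $\{i,i+c\}$ for $1 \le i \le n-c$); a rainbow spanning tree therefore uses exactly one edge of each colour $1,2,\ldots,n-1$, and in particular must contain the unique colour-$(n-1)$ edge $\{1,n\}$. These rigid constraints are what make \cref{conj:gracefulrainbow} substantially harder than the approximate version proved in this paper, which after all is equivalent to the classical graceful tree conjecture.

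Concretely, I would first isolate inside the target tree $T$ a small absorbing substructure $A$ of size $\eta n$ (for some $\eta \ll 1$) that is structurally flexible: natural candidates include long bare paths and, when $T$ has no such path, specific caterpillar-like configurations forced to appear near the high-degree vertices. The remainder $T - A$ would be embedded almost-gracefully by a strengthening of \cref{lem:mainGL}, tuned so that it reserves a prescribed label set $L \subseteq [n]$ for the absorber and leaves a prescribed colour set $C \subseteq [n-1]$ unused. Finally $A$ would be embedded into $L$ so that its edge-differences are exactly $C$, completing a rainbow embedding of $T$ in $K_{[n]}$. A further ingredient would be a preprocessing step which, based on the global shape of $T$, chooses $L$ and $C$ consistently with the unavoidable parity identity $\sum_{xy \in E(T)} |\phi(x)-\phi(y)| \equiv \sum_c c \pmod{2}$ and with the forced assignment $\{\phi^{-1}(1),\phi^{-1}(n)\}$ arising from colour $n-1$.

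The main obstacle is the zero-slack nature of the problem: once $T - A$ has been embedded, both the remaining label set and the remaining colour set are completely determined, and the absorber must realise them simultaneously, with no budget for waste. Unlike the approximate setting of \cref{lem:mainGL}, where extra labels absorb errors, every endpoint of every absorber edge here must land on a specific unused label \emph{and} produce a specific unused difference. Designing an absorber whose embedding polytope covers every possible deficiency pattern, uniformly across the class of $n$-vertex trees --- including those with many high-degree vertices, where long flexible substructures are scarce --- is precisely the rigidity that has kept \cref{conj:gtc} open for six decades; any realistic attack must combine the labelling technology of this paper with a new structural dichotomy on trees guaranteeing a workable absorber in every case.
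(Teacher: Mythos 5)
The statement you were asked about is a \emph{conjecture}, not a theorem: the paper presents \cref{conj:gracefulrainbow} only as an equivalent reformulation of the graceful tree conjecture (\cref{conj:gtc}), obtained from the observation that a bijective labelling $\phi:V(T)\to[n]$ is graceful if and only if the corresponding embedding of $T$ in $K_{[n]}$ is rainbow. The paper does not prove it and does not claim to --- its main result is only the asymptotic statement \cref{thm:gracesize}. Your write-up is accordingly not a proof either, and to your credit you say so explicitly; but that means the honest verdict is that there is a genuine gap, namely the entire argument. Your preliminary observations (each colour $c$ appears on exactly $n-c$ edges of $K_{[n]}$, a rainbow spanning tree must use the unique colour-$(n-1)$ edge $\{1,n\}$, and the parity identity $\sum_{xy\in E(T)}|\phi(x)-\phi(y)|\equiv\binom{n}{2}\pmod 2$) are correct and are standard necessary conditions, but they are constraints on any solution rather than steps toward one.

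The absorption plan you sketch --- embed $T-A$ almost gracefully while reserving a label set $L$ and colour set $C$, then realise $C$ exactly as the difference multiset of $A$ on $L$ --- is a reasonable way to frame the difficulty, and it is broadly consistent with how the paper's machinery (\cref{lem:mainGL}, \cref{lem:TminusJrooted_comps}) might feed into a future exact result. But every genuinely hard point is deferred: no construction of an absorber $A$ whose embeddings into an \emph{arbitrary} leftover pair $(L,C)$ are flexible enough; no strengthening of \cref{lem:mainGL} that controls \emph{which} labels and colours are left over rather than merely how many; and no structural dichotomy handling trees in which long bare paths are absent. Since the conjecture has been open for sixty years, none of these can be waved through. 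If your goal was to reproduce what the paper establishes about this statement, the correct answer is simply the equivalence with \cref{conj:gtc} via the difference colouring of \cref{defn:differencecomplete}; anything beyond that is currently out of reach.
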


\begin{figure}[b]
\centering
\begin{minipage}{.4\textwidth}
  \centering
    \begin{tikzpicture}[scale=0.9]
\draw[Goldenrod, line width=0.5mm] (0,1.5) -- (-1.495,0.455);
\draw[green, line width=0.5mm] (0,1.5) -- (1.495,0.455);
\draw[blue, line width=0.5mm] (0,1.5) -- (-0.88,-1.2135);
\draw[red, line width=0.5mm] (0,1.5) -- (0.88,-1.2135);
\draw[green, line width=0.5mm] (1.495,0.455) -- (0.88,-1.2135);
\draw[green, line width=0.5mm] (-1.495,0.455) -- (-0.88,-1.2135);
\draw[blue, line width=0.5mm] (-1.495,0.455) -- (1.495,0.455);
\draw[red, line width=0.5mm] (-1.495,0.455) -- (0.88,-1.2135);
\draw[red, line width=0.5mm] (1.495,0.455) -- (-0.88,-1.2135);
\draw[green, line width=0.5mm] (-0.88,-1.2135) -- (0.88,-1.2135);
\filldraw[black] (0,1.5) circle (2.6pt) node[anchor=south]{$1$};
\filldraw[black] (1.495,0.455) circle (2.6pt) node[anchor=west]{$2$};
\filldraw[black] (-1.495,0.455) circle (2.6pt) node[anchor=east]{$5$};
\filldraw[black] (0.88,-1.2135) circle (2.6pt) 
node[anchor=north]{$3$};
\filldraw[black] (-0.88,-1.2135) circle (2.6pt) node[anchor=north]{$4$};
\end{tikzpicture}
  \captionof{figure}{$K_{[5]}$}
  \label{fig:K5}
\end{minipage}%
\begin{minipage}{.4\textwidth}
  \centering
\begin{tikzpicture}[scale=0.9]
\draw[Goldenrod, line width=0.6mm] (0,1.5) -- (-1.495,0.455);
\draw[blue, line width=0.5mm] (-1.495,0.455) -- (1.495,0.455);
\draw[red, line width=0.5mm] (1.495,0.455) -- (-0.88,-1.2135);
\draw[green, line width=0.5mm] (-0.88,-1.2135) -- (0.88,-1.2135);
\filldraw[black] (0,1.5) circle (2.6pt) node[anchor=south]{$1$};
\filldraw[black] (1.495,0.455) circle (2.6pt) node[anchor=west]{$2$};
\filldraw[black] (-1.495,0.455) circle (2.6pt) node[anchor=east]{$5$};
\filldraw[black] (0.88,-1.2135) circle (2.6pt) 
node[anchor=north]{$3$};
\filldraw[black] (-0.88,-1.2135) circle (2.6pt) node[anchor=north]{$4$};
\end{tikzpicture}
  \captionof{figure}{Rainbow copy of $P_5$}
  \label{fig:P5}
\end{minipage}
\end{figure}

In this context, our main result (\cref{thm:gracesize}) corresponds to the statement that every $n$-vertex tree $T$ can be embedded into  $K_{[n]}$ such that at least $(1-o(1))n$ distinct colours appear on its edges. Since we have already shown that \cref{thm:gracesize} holds assuming the correctness of \cref{lem:mainGL}, proving this lemma will be the main focus for the remainder of this paper. We wish to attack it from a coloured perspective, so, rather than proving \cref{lem:mainGL} directly, we will actually show that the following equivalent version holds.

\begin{restatable}{lemma}{main}
\label{maintheorem}
    For all $\eps> 0$ there exists $n_0$ such that for all $n>n_0$ the following holds.
    If $T$ is an $n$-vertex tree, then 
    there exists an embedding of $T$ in $K_{[(1+\eps)n]}$ where at least $(1-\eps)n$ distinct colours appear on edges of $T$.
\end{restatable}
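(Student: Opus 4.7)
The plan is to treat the high-degree and low-degree parts of $T$ separately, since \cref{thm:AAGH} already handles the low-degree case. Fix $D := \eps^{2} n / \log n$ and call $v \in V(T)$ \emph{heavy} if $\deg_T(v) \geq D$, otherwise \emph{light}; let $H$ denote the set of heavy vertices. A degree-sum argument gives $|H| \leq 2n/D = O(\eps^{-2}\log n)$. Removing $H$ decomposes $T$ into a forest $F$ whose components each have maximum degree less than $D$, connected back to $H$ by a set of ``attachment'' edges.

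In the first phase I would embed the heavy vertices $H$ together with a carefully chosen subset of their neighbourhoods. Each $v \in H$ is assigned a label $\phi(v) \in [(1+\eps)n]$, and the vertices in a chosen subset $N_v \subseteq N_T(v)$ are placed so that the edge-differences $\{|\phi(v) - \phi(u)| : u \in N_v\}$ are pairwise distinct and, across different $v \in H$, repeat on at most $\eps n / 4$ colours in total. Because there are only $O(\eps^{-2}\log n)$ heavy vertices while the label range has size $(1+\eps)n$, one should have enough room to arrange this by a greedy construction, or by picking the $\phi(v)$ randomly and then matching the vertices in $N_v$ to labels via a random matching within an appropriate bipartite auxiliary graph.

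In the second phase, each component $C$ of $F$ has some vertices (its attachments to $H$) already labelled, and the embedding must be extended to the rest of $C$. Since $\Delta(C) < D = \eps^{2} n / \log n$, a variant of \cref{thm:AAGH} can be applied to embed $C$ into a designated interval of currently unused labels, respecting the pre-specified boundary labels and introducing edge-differences that --- up to a small budget of collisions --- are distinct from those used in the first phase and in the other components. Summing the colour-collision budgets across both phases gives at least $(1-\eps)n$ distinct colours on $E(T)$, as required.

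The principal obstacle is the first phase. When several heavy vertices have linear degree, the sets of edge-differences incident to each one together span essentially all of $[1,(1+\eps)n]$ and so cannot be made disjoint; one has to argue that only a small share of colours is truly repeated between different heavy vertices. Establishing concentration for a random placement of $H$ together with a random matching of their neighbours to labels is the technical heart of what I would need to carry out. A secondary obstacle is that \cref{thm:AAGH} must be adapted to accept pre-specified labels on the boundary vertices of each component of $F$; this probably calls for building a small absorbing gadget inside each low-degree component to swallow leftover labels and to locally correct the small number of colour conflicts created by the main construction.
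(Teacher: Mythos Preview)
Your proposal is a plan with the hard steps left as acknowledged obstacles, not a proof; both obstacles are genuine.

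For Phase~1, when several heavy vertices have degree $\Theta(n)$ you must place them and all their neighbours so that the multiset of edge-differences has only $O(\eps n)$ repetitions. ``Greedy or random'' does not achieve this: a uniformly random assignment of the remaining labels to the neighbours of two centres of degree $\Theta(n)$ already produces $\Theta(n)$ colour coincidences in expectation, not $O(\eps n)$. Coordinating the positions of all heavy vertices and all their neighbours simultaneously is essentially the whole problem for high-degree trees, and you have not supplied a mechanism. For Phase~2, removing $H$ can leave $\Theta(n)$ components of bounded size (e.g.\ a centre of degree $n/2$ whose neighbours are midpoints of paths of length~$2$), on which \cref{thm:AAGH} is vacuous since it requires its input tree to have at least $n_0(\eps)$ vertices; and even on a large component the theorem gives no control over where a prescribed boundary vertex lands, so the ``absorbing gadget'' you allude to is another substantial construction you have not built.

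The paper takes a different route and does not invoke \cref{thm:AAGH} at all. A pigeonhole over a hierarchy $\Delta_0\ll\cdots\ll\Delta_{4/\eps}$ of thresholds finds consecutive $\Delta,\tilde\Delta$ such that vertices of degree in $[\Delta,\tilde\Delta)$ touch at most $\eps n/2$ edges. With $S_{\text{high}}=\{v:\deg_T(v)\ge\tilde\Delta\}$, \cref{lem:structure4} deletes a small waste set $W$ so that $T\setminus(S_{\text{high}}\cup W)\cong F\times\zeta n$ for a single bounded-size rooted forest $F$. These $\zeta n$ identical copies of $F$, together with $S_{\text{high}}$, are embedded rainbowly in one shot by \cref{lem:TminusJrooted_comps} (via an auxiliary contracted tree and Pippenger-type rainbow matchings between random label classes). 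Finally $W$ is placed arbitrarily; the pigeonhole step guarantees that vertices of $W$ either have degree below $\Delta$ or lie in the sparse middle band, so they touch at most $\eps n$ edges in total.
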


\section{Preliminaries}\label{sec:preliminaries}

\subsection{Organisation of the paper}
In the remainder of this section, we introduce notation and some standard probabilistic tools. \Cref{strategy} will give an overview of the proof strategy for \cref{maintheorem}, discussing the main methods needed to (i) find a nice structure in a tree $T$, and (ii) use the properties of this structure to embed parts of $T$ in a rainbow way into the difference coloured complete graph. These two points will be addressed in \cref{sec:structure,sec:TnotW} respectively. In \cref{subsection:PROOF}, we combine everything to prove \cref{maintheorem}.

\subsection{Notation}
For all $n \in \mathbb{N}$, let $[n] \coloneqq \{1,\ldots,n\}$. For any $a,b,c \in \mathbb{R}$ we write $a=b\pm c$ to mean $b-c\leq a \leq b+c$. 
If we say that a statement holds whenever $0 < a \ll b \leq1$, then there exists a non-decreasing function $f:(0,1] \rightarrow (0,1] $ such that the statement holds for all $0<a,b\leq1$ with $a\leq f(b)$. We similarly consider a hierarchy of constants $0 < b_1 \ll b_2 \ll \ldots \ll b_k <1 $, and these constants must be chosen from right to left. For every constant $b$ in a hierarchy it will be implicitly assumed that both $b\in (0,1)$ and $1/b \in \mathbb{N}$ are satisfied.

If $G$ is a graph we denote its vertex set and edge set by $V(G)$ and $E(G)$ respectively, and use the notation $|G| \coloneqq |V(G)|$ and $e(G) \coloneqq |E(G)|$. 
The degree and neighbourhood of a vertex $v \in V(G)$ are denoted by $d_G(v)$ and $N_G(v)$ respectively, and the neighbourhood of a set $U\subseteq V(G)$ is $N_G(U) = \bigcup_{u \in U}N_G(u)$. We may omit the subscripts where context is clear. The minimum degree of $G$ is $\delta(G)$ and the maximum degree is $\Delta(G)$. Given a set $U \subseteq V(G)$, we write $G[U]$ for the induced subgraph of $G$ on $U$. We use $G \setminus U$ to denote the subgraph $G[V(G)\setminus U]$, and for a single vertex $v$, we write $G - v$ instead of $G\setminus \{v\}$.  For an edge subset $M \subseteq E(G)$, $V(M)$ denotes the set of vertices which are contained in some edge belonging to $M$, and we say these vertices are covered by $M$. For any edge $xy \in E(G)$, $G-xy$ is the spanning subgraph obtained by deleting the edge $xy$ from $G$. 
If a graph $G$ is edge-coloured, then we denote by $C(G)$ the set of colours used on some edge in $G$. For a set $S$ of colours, we say that $G$ is $S$-rainbow if all edges in $G$ have a distinct colour 
from the set $S$. 

For graphs $F$ and $G$, we write $F + G$ to denote the disjoint union of $F$ and $G$. The sum is defined analogously for more than two graphs. For $d\in \mathbb{N}$, we write $F \times d$ to mean the graph obtained by taking the disjoint union of $d$ copies of $F$, that is, $F \times d \cong F + (F+ (F+ \ldots )) $ where the sum is taken $d-1$ times (so there are $d$ occurrences of $F$).

For a hypergraph $H$ we use the analogous notation $V(H), E(H), |H|, e(H), \delta(H), \Delta(H)$ as for graphs, whereby degree will always refer to vertex degree.
$H$ is $r$-uniform if each of its edges has size $r$, and it is $k$-partite if $V(H)$ can be partitioned into $k$ parts $U_1,\ldots,U_k$, such that $|e\cap U_i| \leq 1$ for every $e \in E(H),\ i\ \in [k]$.
A hypergraph $H$ is said to be linear if any pair of distinct edges in $H$ share at most one common vertex. 

\subsection{Probabilistic tools}

We will need some common probabilistic tools to show the existence of specified properties in edge-coloured graphs and hypergraphs. We write $X\sim \text{Bin}(n,p)$ to mean $X$ follows the Binomial distribution with parameters $n$ and $p$, and write $\mathbb{P}[\cdot]$ and $\mathbb{E}[\cdot]$ to denote the probability and expectation of a random variable. Given a set $N$ and $p\in [0,1]$, a $p$\textit{-random subset} $S\subseteq N$ is one that is formed by independently keeping every element in $N$ with probability $p$. Similarly if $p\leq 1/k$, then $S_1\ldots, S_k$ are \textit{pairwise disjoint} $p$\textit{-random subsets} of $N$ if they are formed by independently placing each element of $N$ in at most one $S_i$ so that the element is placed in $S_1$ with probability $p$, in $S_2$ with probability $p$, and so on, and placed in none of the sets with probability $1-kp$. These sets form a $p$\textit{-random partition} of $N$ if $p = 1/k$.

\begin{theorem}[Chernoff bound, see e.g.\ 
\cite{JansonSvante2000Rg/S}]\label{thm:chern}
Let $X\sim \text{\upshape{Bin}}(n,p)$.
For all $\lambda \in (0,1)$, 
$$\mathbb{P}[|X-\mathbb{E} [X]|\geq \lambda \mathbb{E} [X] ]\leq 2e^{-\frac{\lambda^2 \mathbb{E} X}{3}}.$$
\end{theorem}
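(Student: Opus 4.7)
The plan is to use the standard exponential-moment (Chernoff--Cramér) method and bound each tail separately. Write $X = X_1 + \cdots + X_n$ as a sum of independent Bernoulli$(p)$ variables so that the moment generating function factorises as $\mathbb{E}[e^{tX}] = (1-p+pe^t)^n$ for every real $t$. The inequality $1+x \leq e^x$ then gives $\mathbb{E}[e^{tX}] \leq \exp(\mu(e^t-1))$, where $\mu := \mathbb{E}[X] = np$. This single estimate is all the probabilistic input that is needed; everything else is optimisation and calculus.

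For the upper tail $\mathbb{P}[X \geq (1+\lambda)\mu]$, I would apply Markov's inequality to $e^{tX}$ with $t > 0$ to obtain
\[
\mathbb{P}[X \geq (1+\lambda)\mu] \leq \exp\bigl(\mu(e^t-1) - t(1+\lambda)\mu\bigr),
\]
and optimise by setting $t = \log(1+\lambda)$, which yields the bound $\exp(-\mu\,\varphi(\lambda))$ with $\varphi(\lambda) = (1+\lambda)\log(1+\lambda) - \lambda$. The lower tail is symmetric: apply Markov to $e^{-tX}$ with $t > 0$ and optimise at $t = -\log(1-\lambda)$, giving a bound of $\exp(-\mu\,\psi(\lambda))$ with $\psi(\lambda) = (1-\lambda)\log(1-\lambda) + \lambda$. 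A union bound over the two one-sided events contributes the factor of $2$ in front of the exponential in the statement.

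It then remains to verify the two analytic inequalities $\varphi(\lambda) \geq \lambda^2/3$ and $\psi(\lambda) \geq \lambda^2/2$ on $(0,1)$; the weaker of the two (namely $\lambda^2/3$) is what appears in the statement after the union bound. The clean way to handle $\varphi(\lambda) - \lambda^2/3 \geq 0$ is to set $g(\lambda) := \varphi(\lambda) - \lambda^2/3$, check $g(0)=g'(0)=0$, and analyse $g'(\lambda) = \log(1+\lambda) - 2\lambda/3$: the second derivative $1/(1+\lambda) - 2/3$ is nonnegative on $[0,1/2]$ and nonpositive on $[1/2,1]$, so $g'$ rises on $[0,1/2]$ from $g'(0)=0$ and then decreases, with $g'(1) = \log 2 - 2/3 > 0$, showing $g' \geq 0$ throughout $[0,1]$ and hence $g \geq 0$. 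The inequality $\psi(\lambda) \geq \lambda^2/2$ follows by an analogous (and easier) second-derivative argument, or directly from the power series $-\log(1-\lambda) = \sum_{k\geq 1} \lambda^k/k$.

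The main (and only) obstacle is this last analytic step for the upper tail: the constant $3$ (rather than the cleaner $2$ obtained from a second-moment or variance bound) is essentially forced by the behaviour of $\varphi(\lambda)$ as $\lambda \to 1$, where the inequality $\varphi(\lambda) \geq \lambda^2/3$ is nearly tight. Everything else in the argument is routine manipulation of the moment generating function, and no independence input beyond the product form of $\mathbb{E}[e^{tX}]$ is required.
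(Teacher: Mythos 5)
The paper gives no proof of this statement: it is quoted as a standard Chernoff bound with a citation to Janson--\L{}uczak--Ruci\'nski, and your argument is precisely the standard exponential-moment proof found there. It is correct, including the calculus verifying $(1+\lambda)\log(1+\lambda)-\lambda\geq\lambda^2/3$ and $(1-\lambda)\log(1-\lambda)+\lambda\geq\lambda^2/2$ on $(0,1)$ and the union bound producing the factor $2$.
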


 Let $Z_1,\ldots,Z_n$ be independent random variables, each taking values in a set $\Omega$. For a constant $c$, 
 we say that a function $f :\Omega^n \rightarrow \mathbb{R}$ is \textit{$c$-Lipschitz} if for any $\omega = (\omega_1,\ldots,\omega_n) \in \Omega^n$, changing the outcome of at most one $\omega_i$ affects the value of $f(\omega)$ by at most $c$. 

\begin{theorem}[McDiarmid's inequality \cite{mcdiarmid1989method}]\label{thm:McDiarmid}
   Let $Z_1,\ldots,Z_n$ be independent random variables, each taking values in a set $\Omega$. Let $f : \Omega^n \rightarrow \mathbb{R}$ be a $c$-Lipschitz function and consider the random variable $X = f(Z_1,\ldots,Z_n)$. For all $t > 0$,
    $$
    \mathbb{P}[|X-\mathbb{E} [X]|\geq t]\leq 2e^{-\frac{2t^2}{c^2n}}.
    $$
\end{theorem}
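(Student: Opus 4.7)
The plan is to prove McDiarmid's inequality by reducing it to a martingale concentration argument applied to the Doob martingale of $X = f(Z_1, \ldots, Z_n)$. First I would set $\mathcal{F}_i = \sigma(Z_1, \ldots, Z_i)$ for $0 \leq i \leq n$ and define $M_i = \mathbb{E}[X \mid \mathcal{F}_i]$, so that $M_0 = \mathbb{E}[X]$ and $M_n = X$ form a martingale; write $D_i = M_i - M_{i-1}$ for its increments. Writing $X - \mathbb{E}[X]$ as the telescoping sum $\sum_{i=1}^n D_i$ will let me exponentiate and use the tower property to reduce the problem to a conditional moment generating function bound on a single increment.

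The first key step is to show that the increments have bounded conditional range: I want to find $\mathcal{F}_{i-1}$-measurable random variables $A_i \leq B_i$ with $B_i - A_i \leq c$ such that $A_i \leq D_i \leq B_i$ almost surely. Exploiting independence of the $Z_j$, I can realise $M_i$ as $g_i(Z_1, \ldots, Z_i)$ where $g_i$ is obtained from $f$ by integrating out $Z_{i+1}, \ldots, Z_n$. For any fixed realisation of $Z_1, \ldots, Z_{i-1}$, the map $z \mapsto g_i(Z_1, \ldots, Z_{i-1}, z)$ inherits the $c$-Lipschitz property from $f$ by averaging, so its range over $z \in \Omega$ is contained in an interval of length at most $c$; subtracting $M_{i-1} = g_{i-1}(Z_1, \ldots, Z_{i-1})$ shifts this interval but preserves its length.

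The second key step is Hoeffding's lemma: any mean-zero random variable $D$ with $A \leq D \leq B$ satisfies $\mathbb{E}[e^{\lambda D}] \leq \exp(\lambda^2 (B-A)^2 / 8)$. Applied to each $D_i$ conditionally on $\mathcal{F}_{i-1}$, this yields $\mathbb{E}[e^{\lambda D_i} \mid \mathcal{F}_{i-1}] \leq \exp(\lambda^2 c^2 / 8)$. Iterating the tower property then gives
$$\mathbb{E}[e^{\lambda (X - \mathbb{E}[X])}] \leq \exp\!\left(\lambda^2 n c^2 / 8\right).$$
Markov's inequality applied to $e^{\lambda(X - \mathbb{E}[X])}$ yields $\mathbb{P}[X - \mathbb{E}[X] \geq t] \leq \exp(\lambda^2 n c^2 / 8 - \lambda t)$, and optimising at $\lambda = 4t/(nc^2)$ produces the one-sided bound $\exp(-2t^2/(nc^2))$. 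Running the same argument with $-X$ in place of $X$ and applying a union bound gives the claimed two-sided inequality with the leading factor of $2$.

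The main obstacle is obtaining the sharp constant in Hoeffding's lemma, as it is solely responsible for the factor of $2$ in the exponent of the conclusion rather than $1/2$. A naive Azuma-style argument that merely uses $|D_i| \leq c$ would only give $\exp(-t^2/(2nc^2))$, which is four times weaker in the exponent. The sharp constant instead comes from using the symmetric range bound $B_i - A_i \leq c$ together with convexity of $\lambda \mapsto e^{\lambda x}$ on $[A_i, B_i]$: one bounds $e^{\lambda D_i}$ above by the linear interpolation, takes conditional expectations (using $\mathbb{E}[D_i \mid \mathcal{F}_{i-1}] = 0$), and estimates the resulting logarithm of the moment generating function by its second-order Taylor approximation.
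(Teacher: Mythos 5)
The paper does not prove this statement---it is quoted as a known result with a citation to McDiarmid's survey---so there is no internal proof to compare against; your argument is the standard and correct proof of the bounded-differences inequality via the Doob martingale $M_i=\mathbb{E}[X\mid\mathcal{F}_i]$, the conditional range bound $B_i-A_i\leq c$ obtained by integrating out the later coordinates, and the sharp form of Hoeffding's lemma, which is essentially the proof appearing in the cited reference. All the steps check out, including the optimisation $\lambda=4t/(nc^2)$ yielding the exponent $-2t^2/(c^2n)$ and the union bound giving the leading factor of $2$; the only point one might polish in a full write-up is the measurability of the envelopes $A_i,B_i$ when $\Omega$ is uncountable, which is immaterial for the finite state spaces used in this paper.
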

\section[Strategy]{Proof strategy for \Cref{maintheorem}}\label{strategy}
We have already observed that a graceful labelling of a tree $T$ with labels in $[n]$ can be thought of interchangeably as a rainbow embedding of $T$ in $K_{[n]}$. Therefore throughout, we will regard labellings and (edge-coloured) embeddings of graphs as equivalent, where the colour of an edge is uniquely determined by taking the absolute difference of the labels assigned to the endvertices of the edge. More explicitly, given two adjacent vertices $x,y \in V(T)$ for some tree $T$ together with a mapping $\phi:V(T) \rightarrow [n]$, we will refer to the colour of the edge $xy$ to mean the value of $|\phi(x)-\phi(y)|$.

Let us now give an overview of the main ideas used in our strategy to embed a given $n$-vertex tree $T$ into $K_{[(1+\varepsilon)n]}$ in an almost rainbow way. First, in \Cref{sec:structure}, we find a helpful structure within $T$ which is easier to work with. We split up $T$ by first isolating a set $S_{\text{high}} \subseteq V(T)$, consisting of all vertices of high degree. There cannot be too many high degree vertices, since $e(T) = n-1$, so we have an upper bound for $|S_{\text{high}}|$. We find a structural argument \cref{lem:structure4} which roughly states the following: for any subset of vertices $S\subseteq V(T)$ such that $|S|$ is not too large, there exists a set of vertices $W \subseteq V(T) \setminus S$, also of small order, such that deleting $S \cup W$ from $T$ yields many vertex-disjoint copies of some small forest $F$. Applying this with $S_{\text{high}}$ playing the role of $S$, we can think of the set $W$ as a set of `waste' vertices in $T$, of which the lemma tells us there are not too many. So, we have that $T \setminus (S_{\text{high}} \cup W)$ is exactly made up of many copies of some small forest $F$.

Our new aim is to find a fully rainbow copy of $T\setminus W$ in $K_{[(1+\varepsilon)n]}$, since at the end, the set $W$ will be embedded arbitrarily to the available leftover labels, and this is where we may obtain colour repetitions. There will not be too many repeats since all vertices in $W$ will have low degree, so that the number of edges with an endvertex in $W$ is small. The embedding of $T\setminus W$ is the main focus in \Cref{sec:TnotW}, with the key lemma given as \cref{lem:TminusJrooted_comps}. So, let us now summarise this process, noting that $T \setminus W$ is comprised of the set $S_{\text{high}}$ and many vertex-disjoint copies of $F$ (see \cref{fig:T_W_2}). 
We define an auxiliary tree $T_{\text{aux}}$ from $T\setminus W$ obtained by contracting the set $S_{\text{high}}$ into a single vertex $v$, and contracting sets of vertices in the copies of $F$ (chosen so that these sets of vertices are copies of the same vertex of $F$) so that the resulting tree $T_{\text{aux}}$ consists of one high degree vertex $v$ adjacent to constantly many copies of $F$ (see \cref{fig:T_aux_2}).

\begin{figure}
\centering
\begin{minipage}{.4\textwidth}
  \centering
            \begin{tikzpicture}[scale=1]
            \filldraw[ProcessBlue!30!white] (-2,0.25) ellipse (0.5cm and 1.2cm) node[anchor=east, xshift = -1.5em]{\color{black}$S_{\text{high}}$}; 
            \filldraw[Goldenrod!50!white] (-0.5, 1.25) ellipse (0.4cm and 0.9cm); 
            \filldraw[RedOrange!40!white] (-0.5, -1) ellipse (0.4cm and 0.9cm); 
            \filldraw[Lavender!50!white] (1.5,2) circle (6pt); 
            \filldraw[Blue!30!white] (1.5,1.5) circle (6pt); 
            \filldraw[Lavender!50!white] (1.5,1) circle (6pt); 
            \filldraw[Blue!30!white] (1.5,0.5) circle (6pt); 
            \filldraw[Black!30!white] (1.5,-1.75) circle (6pt); 
            \filldraw[red!50!white] (1.5,-1.25) circle (6pt); 
            \filldraw[Black!30!white] (1.5,-0.75) circle (6pt); 
            \filldraw[red!50!white] (1.5,-0.25) circle (6pt); 
            
\draw[black,  line width = 0.6mm] (-2,1) -- (-0.5,1.75);
\draw[black,  line width = 0.6mm] (-2,0.5) -- (-0.5,0.75);
\draw[black,  line width = 0.6mm] (-2,1) -- (-2,0.5);
\draw[black,  line width = 0.6mm] (-2,-0.5) -- (-.5,-0.5);
\draw[black,  line width = 0.6mm] (-2,-0.5) -- (-.5,-1.5);
\draw[black,  line width = 0.6mm] (-.5,-1.5) -- (1.5,-1.75);
\draw[black,  line width = 0.6mm] (-.5,-1.5) -- (1.5,-1.25);
\draw[black,  line width = 0.6mm] (-.5,-0.5) -- (1.5,-0.25);
\draw[black,  line width = 0.6mm] (-.5,-0.5) -- (1.5,-0.75);
\draw[black,  line width = 0.6mm] (-.5,1.75) -- (1.5,2);
\draw[black,  line width = 0.6mm] (-.5,1.75) -- (1.5,1.5);
\draw[black,  line width = 0.6mm] (-.5,0.75) -- (1.5,1);
\draw[black, line width = 0.6mm] (-.5,0.75) -- (1.5,0.5);
\filldraw[black] (-2,1) circle (2.6pt); 
\filldraw[black] (-2,0.5) circle (2.6pt);
\filldraw[black] (-2,-0.5) circle (2.6pt) ;
\filldraw[black] (-0.5,0.75) circle (2.6pt); 
\filldraw[black] (-0.5,1.75) circle (2.6pt);
\filldraw[black] (-0.5,-0.5) circle (2.6pt) ;
\filldraw[black] (-0.5,-1.5) circle (2.6pt) ;
\filldraw[black] (1.5,2) circle (2.6pt); 
\filldraw[black] (1.5,1.5) circle (2.6pt);
\filldraw[black] (1.5,1) circle (2.6pt) ;
\filldraw[black] (1.5,0.5) circle (2.6pt) ;
\filldraw[black] (1.5,-1.75) circle (2.6pt); 
\filldraw[black] (1.5,-1.25) circle (2.6pt);
\filldraw[black] (1.5,-0.25) circle (2.6pt) ;
\filldraw[black] (1.5,-0.75) circle (2.6pt) ;
\node[] at (0.5,0.15) {\textbf{\ldots}};
\filldraw[black] (-2,0.05) circle (0.7pt) ;
\filldraw[black] (-2,0.2) circle (0.7pt) ;
\filldraw[black] (-2,-0.1) circle (0.7pt) ;
\draw [decorate, decoration = {brace, raise = 2pt, amplitude = 14pt}] (1.8,2) --  (1.8,-1.95) node[midway,xshift=4em,yshift=0.5em]{\small Copies of }
node[midway,xshift=4em, yshift =-0.65em]{\small forest $F$};
            \end{tikzpicture}
  \captionof{figure}{Example of $T\setminus W$ where $F$ is the star with two leaves, rooted at the centre.}
  \label{fig:T_W_2}
\end{minipage}%
\hspace{12mm}
\begin{minipage}{.4\textwidth}
  \centering
 \begin{tikzpicture}[scale=1]
        \filldraw[ProcessBlue!30!white] (-2,0.5) circle (8pt) node[anchor=east, xshift = -0.8em]{\color{Black} $v$}; 

            \filldraw[Goldenrod!50!white] (0,1.5) circle (8pt); 
            \filldraw[RedOrange!40!white] (0,-0.5) circle (8pt); 
            \filldraw[Lavender!50!white] (2,2) circle (8pt); 
            \filldraw[Blue!30!white] (2,1) circle (8pt); 
            \filldraw[Black!30!white] (2,-1) circle (8pt); 
            \filldraw[red!40!white] (2,0) circle (8pt); 
 \draw[black, line width=0.6mm] (0,1.5) -- (-2,0.5);
\draw[black, line width=0.6mm] (-2,0.5) -- (0,-0.5);
\draw[black, line width=0.6mm] (0,1.5) -- (2,2);
\draw[black, line width=0.6mm] (0,1.5) -- (2,1);
\draw[black, line width=0.6mm] (0,-0.5) -- (2,-1);
\draw[black, line width=0.6mm] (0,-0.5) -- (2,0);
\filldraw[black] (-2,0.5) circle (2.6pt) ;
\filldraw[black] (0,1.5) circle (2.6pt) ;
\filldraw[black] (2,1) circle (2.6pt);
\filldraw[black] (2,2) circle (2.6pt);
\filldraw[black] (2,-0) circle (2.6pt) ;
\filldraw[black] (2,-1) circle (2.6pt) ;
\filldraw[black] (0,-0.5) circle (2.6pt);
\node[] at (1,0.4) {\textbf{\ldots}};
\draw [decorate, decoration = {brace, raise = 2pt, amplitude = 14pt}] (2.2,2.3) --  (2.2,-1.3) node[midway,xshift=4.5em,yshift=0.5em]{\small Fewer copies}
node[midway,xshift=4.5em, yshift =-0.65em]{\small of $F$};
            \end{tikzpicture}
  \captionof{figure}{Auxiliary tree $\taux$. Each vertex has a colour, and corresponds to the contraction of the vertices of the same colour in \cref{fig:T_W_2}.}
  \label{fig:T_aux_2}
\end{minipage}
\end{figure}

We observe that for any integer $d$, a copy of $F \times d$, that is, the union of $d$ vertex-disjoint copies of $F$, can be constructed by taking the union of $e(F)$ pairwise edge-disjoint matchings, each of size $d$, and each of which corresponds to a distinct edge of $F$. Thus we find tools for embedding rainbow matchings in \Cref{lem:mathcingSiSjv3}, and use these to find a rainbow embedding of $T_{\text{aux}}$ inside $K_{[(1+\varepsilon/2)|T_{\text{aux}}|]}$. We consider a rainbow `blow up' of this auxiliary tree in $K_{[(1+\varepsilon)n]}$ by thinking of each vertex as a set (these will correspond to the contracted vertex sets) and by replacing each edge with some rainbow matching.  
With the flexibility from the extra labels, this allows us to manipulate the properties of the blow up and be more careful with which sections we are allowed to embed into, in order to obtain a rainbow embedding of $T \setminus W$ in $K_{[(1+\varepsilon)n]}$, as desired. 

For this argument to work we require that both $|S_{\text{high}}|$ is small, and all vertices in $W$ have low degree. The former condition holds using the fact that all vertices in $S_{\text{high}}$ have degree bigger than some constant $\Delta$, so since $e(T) = n-1$ there must be fewer than $2n/\Delta$ of them. But if we choose $\Delta$ to be too high, then we cannot get a good upper bound for the maximum degree of the waste vertices. Therefore, for these conditions to be compatible, we need to be particularly careful with how we select the parameters which define these sets. More detail of this is given prior to the full proof, in \cref{subsection:PROOF}. 

\section{Tree splitting}\label{sec:structure}
As mentioned in the previous section, in order to prove \cref{maintheorem}, we start by finding a way to delete certain vertices in a tree to yield many copies of some forest $F$, satisfying various additional properties. In this section, our main aim is to prove the following lemma.
\begin{lemma}\label{lem:structure4}
For all $\delta> 0$ there exist $\zeta_0,n_0>0$ such that the following holds for all $n>n_0$ and $\zeta <\zeta_0$ satisfying $\zeta n \in \N$. 
    Let $T$ be an $n$-vertex tree and let $S \subseteq V(T)$ have order at most $\delta n/10$. There exists a set $W\subseteq V(T) \setminus S$ of order at most $\delta n$ such that $T \setminus W$ satisfies the following properties:
    \begin{enumerate}[label = \upshape{(\roman*)}]
        \item $T \setminus (W\cup S) \cong F \times \zeta n$ for some forest $F$ which has rooted trees as components; and
        \item for every component of $F \times \zeta n$, the root has at most one neighbour in $S$, and all other vertices in the component have no neighbour in $S$.
    \end{enumerate}
\end{lemma}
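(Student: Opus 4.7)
The plan is to construct $W$ as the disjoint union of three ``waste'' sets and verify that the remainder decomposes into $\zeta n$ copies of some rooted forest $F$.

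\textbf{Handling $S$-interaction.} Root $T$ at an arbitrary $\rho\in V(T)\setminus S$. Let $B\subseteq V(T)\setminus S$ be the set of vertices having at least one child in $S$; since each $s\in S$ has a unique parent in $T$, $|B|\leq|S|\leq\delta n/10$. Let $H$ be the set of vertices in $V(T)\setminus(S\cup B)$ of $T$-degree exceeding $\Delta:=\ceil{10/\delta}$; the degree-sum bound gives $|H|\leq 2(n-1)/\Delta\leq\delta n/5$. Put $T':=T\setminus(S\cup B\cup H)$, a forest of maximum degree at most $\Delta$, and root each component of $T'$ at its \emph{top vertex}---the one closest to $\rho$ in $T$. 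A short case analysis then confirms condition (ii) component-wise: the top vertex's $T$-parent (if it exists) lies outside $T'$, so the top's only possible $S$-neighbour is its parent; any non-top vertex $v$ has its $T$-parent inside $T'$ (hence not in $S$), and since $v\notin B$, no $T$-child in $S$.

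\textbf{Partition into small pieces.} Fix a constant $k\geq 2$. Process each component of $T'$ in post-order. For each not-yet-finalized vertex $v$, maintain a rooted \emph{pending subtree} of size $s(v)$, initialized to $\{v\}$. When processing $v$, for each not-yet-finalized child $c$ in some fixed order, \emph{absorb} $c$'s pending subtree into $v$'s if $s(v)+s(c)\leq k$; otherwise \emph{finalize} $c$'s pending subtree as a new \emph{piece}. At the top vertex of each component, finalize the remaining pending subtree. This yields vertex-disjoint rooted subtrees of $T'$ of size between $1$ and $k$ and maximum degree at most $\Delta$. Each piece's root is either the top of its component (handled above) or a non-top vertex whose $T$-parent remains in $T'$ (hence not in $S$), so condition (ii) persists.

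\textbf{Packing isomorphic copies.} The number $M:=M(k,\Delta)$ of (rooted) isomorphism types of trees with $\leq k$ vertices and maximum degree $\leq\Delta$ is a constant. Let $m_i$ count the pieces of type $i$; set $f_i:=\floor{m_i/(\zeta n)}$, and let $F$ be the disjoint union, over all $i$, of $f_i$ copies of type $i$. Since $m_i\geq f_i\zeta n$, we may designate $f_i\zeta n$ pieces of each type as \emph{used}; together they form exactly $\zeta n$ vertex-disjoint copies of $F$. The unused pieces number fewer than $M\zeta n$, totalling fewer than $Mk\zeta n$ vertices; call this set $W_3$. Choosing $\zeta_0:=\delta/(10Mk)$ ensures $|W_3|\leq\delta n/10$, and setting $W:=B\cup H\cup W_3$ yields $|W|\leq 2\delta n/5<\delta n$. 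By construction $T\setminus(S\cup W)\cong F\times\zeta n$, giving (i); and (ii) was verified throughout.

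\textbf{Main obstacle.} The argument is largely a careful accounting, but two points deserve care. First, defining $B$ as the set of \emph{parents} of $S$-vertices, rather than the full neighbourhood of $S$, is essential: the latter could have size $\Theta(n)$ if $S$ contains a high-degree vertex, whereas $|B|\leq|S|$. Second, the constants must be picked in the correct order---$\Delta$ from $\delta$, then a fixed constant $k$, then $M=M(k,\Delta)$, and finally $\zeta_0$ from $(\delta,M,k)$---to avoid circularity, since $M$ depends on both $k$ and $\Delta$.
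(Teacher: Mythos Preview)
Your first step---rooting $T$ and taking $B$ to be the parents of $S$-vertices rather than the full neighbourhood---is a clean alternative to the paper's more involved argument via the subtree spanned by $S$, and it does guarantee that each vertex of $T'$ has at most one $S$-neighbour (its $T$-parent), so condition~(ii) is under control. The gap is in the second step. Your ``pieces'' are vertex-disjoint rooted subtrees partitioning $V(T')$, but they are \emph{not} the connected components of any induced subgraph: whenever a piece is finalised with root $c$ and $c$ is not the top of its $T'$-component, the edge from $c$ to its $T'$-parent (which lies in another piece) is still an edge of $T'$. Hence, after discarding the unused pieces $W_3$, the induced graph $T\setminus(S\cup W)$ retains all inter-piece edges among the used pieces, and the claimed isomorphism $T\setminus(S\cup W)\cong F\times\zeta n$ is false. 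Concretely, take $T$ a path and $S=\emptyset$: then $B=H=\emptyset$, $T'=T$, your pieces are consecutive subpaths of length at most $k$, and $T\setminus W_3$ is a union of long paths rather than many disjoint copies of a fixed short forest.

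The underlying issue is that removing high-degree vertices bounds $\Delta(T')$ but does nothing to bound component sizes, and it is small \emph{components} (not small degree) that are needed for the packing step to yield an induced-subgraph isomorphism. The paper achieves this directly via an iterated centroid argument: it deletes $O(n/m)$ vertices so that every component of the remaining forest has order at most $m$, and then groups whole components by rooted isomorphism type, so that discarding surplus components genuinely leaves $F\times\zeta n$ as an induced subgraph. Your post-order partition could in principle be salvaged by additionally deleting a separating vertex at each non-top finalisation and controlling the resulting count, but as written the argument does not establish~(i).
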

We need to introduce some further results that demonstrate various properties about trees and forests, which we will then combine to prove \cref{lem:structure4} at the end of this section.

\begin{lemma}\label{lem:structureoneedgeJtree}
 Let $T$ be an $n$-vertex tree. For every $S\subseteq V(T)$, 
 there is a set $W \subseteq V(T) \setminus S$ satisfying 
 $|W|\leq 2|S|$, such that for every component $C$ of $T \setminus (S\cup W)$, there is at most one edge between $C$ and $S$.
\end{lemma}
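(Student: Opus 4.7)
The plan is to prove this via a simple rooting trick. Assume $S\neq\emptyset$ (the case $S=\emptyset$ is trivial with $W=\emptyset$). I would root $T$ at an arbitrary vertex $s_0\in S$, and set
\[
W=\{v\in V(T)\setminus S : v \text{ has a child in } S\}.
\]

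To bound $|W|$, send each $v\in W$ to an arbitrarily chosen child of $v$ lying in $S$. Distinct vertices have disjoint sets of children, so this is an injection $W\to S\setminus\{s_0\}$ (the root $s_0$ has no parent, hence is never hit). Therefore $|W|\leq |S|-1$, which is considerably stronger than the required $6|S|$.

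For the component property, let $C$ be a component of $T\setminus(S\cup W)$. Because $T\setminus(S\cup W)\subseteq T\setminus S$, the set $C$ lies in a single component $C_i$ of $T\setminus S$. In the rooted tree, $C_i$ has a unique vertex $r_i$ whose parent lies in $S$ (the vertex of $C_i$ closest to $s_0$); every other vertex of $C_i$ has its parent inside $C_i$. For any $v\in C$, the neighbours of $v$ in $S$ are exactly its parent (if it lies in $S$) and its children in $S$; the latter set is empty since $v\notin W$. Hence only $r_i$, and only when $r_i\in C$, can contribute to edges from $C$ to $S$, so $C$ sends at most one edge into $S$.

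I do not foresee any real obstacle: the argument is purely structural and involves no quantitative estimates. The one point requiring care is spotting the correct definition of $W$, which is enabled by rooting at a vertex of $S$: this choice ensures that the ``upward'' edge from each non-$S$ component $C_i$ into $S$ is witnessed by a single vertex $r_i$, while all ``downward'' edges from $C_i$ into $S$ can be eliminated simultaneously by placing their non-$S$ endpoints into $W$, yielding the clean linear bound.
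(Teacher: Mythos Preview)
Your argument is correct, and it takes a genuinely different and simpler route than the paper. The paper defines $W\coloneqq N_{T_S}(S)\setminus S$, where $T_S$ is the smallest subtree of $T$ containing $S$; it then bounds $|W|$ by a leaf/degree counting identity (the number of leaves of a tree equals $\sum_{d(v)\geq 3}(d(v)-2)+2$), using that every leaf of $T_S$ lies in $S$, to obtain $|W|\leq 6|S|$. The component property is then verified by a separate contradiction argument that exhibits a cycle in $T$ if some component sends two edges into $S$. Your rooting trick collapses both steps: the injection ``send $v\in W$ to a child of $v$ in $S$'' immediately gives the sharper bound $|W|\leq |S|-1$, and the component property follows directly because any $v\in C$ has no $S$-children (by definition of $W$) and only the apex $r_i$ can have an $S$-parent. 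What the paper's approach buys is a canonical, root-free description of $W$ via the Steiner tree; what your approach buys is a shorter proof and a tighter constant.
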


\begin{proof}
    Let $T_S$ be the smallest subtree of $T$ which contains all vertices in $S$, noting that this is unique. We first prove by induction on $|S|$ that $|N_{T_S}(S)\setminus S|\leq 2|S|$. 
    If $|S| = 1$, then $T_S$ consists only of the single vertex in $S$ and $N_{T_S}(S) = \emptyset$, so there is nothing to prove. Assume now that $|S|\geq 2$ and for all $S'\subseteq V(T)$ with $|S'|<|S|$, we have $|N_{T_{S'}}(S')\setminus S'|\leq 2|S'|$.

    Since we chose $T_S$ minimally, all leaves of $T_S$ must belong in $S$. Otherwise, we could delete a leaf outside of this set and find a smaller subtree still containing $S$. Since $|T_S|\geq |S|\geq 2$ there exists a leaf of $T_S$, denote one by $v$. Let $S' = S\setminus \{v\}$ so that $|S'| = |S|-1$. We have $T_{S'} \subset T_{S}$ and there is a path $P$ in $T_S$ that connects $v$ to $V(T_{S'})$ and has no internal vertices in $V(T_{S'})$, so that $T_S = T_{S'}\cup P$. Let $w$ denote the endvertex of P that belongs to $V(T_{S'})$. Passing from $S'$ to $S$ can only introduce at most two new vertices into $N_{T_S}\setminus S$, namely the neighbour of $v$ on $P$ and the neighbour of $w$ on $P$, therefore by assumption, this set has size at most $|N_{T_{S'}}(S')\setminus S'|+2 \leq 2|S|$, as desired.

    Now, let $W = N_{T_S}(S)\setminus S$ so that $|W|\leq 2|S|$. Suppose there exists a component $C$ in $T\setminus (S\cup W)$ which sends two distinct edges into $S$, say $xs$ and $yt$ for some $x,y \in V(C)$, $s,t \in S$. Since $T_S$ and $C$ are both connected, then there is a path $P$ from $s$ to $t$ contained in $T_S$, and there is a path $Q$ from $x$ to $y$ contained in $C$. We know that at least one of these paths is non-empty as otherwise $x=y$, $s=t$, which implies $xs = yt$. If $V(P)\cap V(Q) =\emptyset$, then we find a cycle in $T$ consisting of the edges $xs$ and $yt$ and the paths $P$ and $Q$, a contradiction.
    Otherwise, choose $z \in V(P)\cap V(Q)$ to be of minimal distance from $x$ along the path $Q$. Then the subpath $Q'$ of $Q$ from $x$ to $z$ is edge-disjoint from the subpath $P'$ of $P$ from $z$ to $s$.
    Since $z \in V(Q) \subseteq C$, then $z\notin S \cup N_{T_S}(S)$, implying that $P'$ is a path of length at least $2$. Therefore, we find a cycle in $T$ of length at least three, consisting of $Q'$, $P'$ and the edge $xs$, again a contradiction. We conclude that every component of $T \setminus (S \cup W)$ touches at most one edge going into $S$.
\end{proof}

We use the following standard observation about trees, providing a proof for completeness.
\begin{fact}\label{lem:smallcompsbasecase}
    Every $n$-vertex tree $T$ contains a vertex $v \in V(T)$ such that all components of $T -v$ have order at most $n/2$. 
\end{fact}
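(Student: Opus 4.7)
The plan is to prove this by a descent argument on a natural potential function that measures how far a vertex is from being a centroid. For each $w \in V(T)$, define $c(w)$ to be the maximum of $|C|$ over all components $C$ of $T - w$. The goal is to exhibit a vertex $v$ with $c(v) \leq n/2$, and I will do so by starting from an arbitrary vertex and repeatedly moving across an edge in a direction that strictly decreases $c$.

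Starting with any $v_0 \in V(T)$, if $c(v_0) \leq n/2$ there is nothing to do. Otherwise I would pick a component $C$ of $T - v_0$ with $|C| = c(v_0) > n/2$ and let $u$ be the unique neighbour of $v_0$ lying in $C$. I would then analyse the components of $T - u$, which split into two types: (i) one component containing $v_0$, consisting of $v_0$ together with all vertices of the components of $T - v_0$ other than $C$ (still connected through $v_0$ in $T - u$), of size $n - |C|$; and (ii) the components of the graph $T[V(C) \setminus \{u\}]$, each of which is a strict subset of $V(C)$ and so has at most $|C| - 1$ vertices.

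Since $|C| > n/2$, both bounds $n - |C|$ and $|C| - 1$ are strictly less than $|C| = c(v_0)$, so $c(u) < c(v_0)$. Iterating this move produces a sequence of vertices along which $c$ strictly decreases; because $c$ takes positive integer values the process must terminate, and the terminal vertex $v$ necessarily satisfies $c(v) \leq n/2$, which is exactly the claim. The only step requiring any care is the structural description of the components of $T - u$ used above, but this is immediate from the observation that $v_0$ is joined in $T$ to exactly one vertex of each component of $T - v_0$, so that removing $u$ disconnects $v_0$ from $C$ while leaving $v_0$'s links to the other components of $T - v_0$ intact; beyond this the argument is entirely routine.
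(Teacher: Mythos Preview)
Your proof is correct. The paper takes a closely related but slightly different route: it orients each edge $xy$ of $T$ toward the endpoint lying on the larger side of $T-xy$, notes that any orientation of a tree has a sink, and checks that a sink $v$ has every component of $T-v$ of size at most $n/2$. Your descent argument is, in effect, walking along this orientation one edge at a time until a sink is reached, so the underlying idea is the same; the paper's version is a touch more concise because it bypasses the explicit analysis of how $c$ changes at each step, while yours is entirely self-contained and does not need to invoke the existence of a sink in an oriented tree.
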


\begin{proof}
    Let us consider an orientation of the tree $T$ given as follows: for every $xy\in E(T)$, direct the edge from $x$ to $y$ if the component of $T - xy$ containing $x$ is smaller than the one containing $y$ (and orient the edge $xy$ arbitrarily if these components have the same size). Every oriented tree contains a sink, that is, a vertex $v \in V(T)$ where $v$ has only in-neighbours. Any component $C$ in $T-v$ contains some neighbour $u$ of $v$ since $T$ is connected, and thus $C$ is smaller than the component of $T-uv$ which contains $v$. So, $|C|\leq n/2$.
    \end{proof}

This fact provides a means to prove the inductive step of the next straightforward lemma.
\begin{lemma}\label{lem:smallcomponents}
    Let $k\in \mathbb{N} \cup \{0\}$. For every $n$-vertex tree $T$ there exists a set $W\subseteq V(T)$ of order at most $ 1+ 2+\dots + 2^k$, such that all components of $T\setminus W$ have order at most $n/2^k$.
\end{lemma}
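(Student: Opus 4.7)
I would prove \cref{lem:smallcomponents} by induction on $k$. The base case $k = 0$ is trivial: take $W = \emptyset$, so that the only component of $T \setminus W$ is $T$ itself, of order $n \le n/2^0$. No application of \cref{lem:smallcompsbasecase} is needed here.

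For the inductive step, suppose the statement holds for some $k \ge 0$ and let $T$ be an arbitrary $n$-vertex tree. Applying the inductive hypothesis gives a set $W_0 \subseteq V(T)$ with $|W_0| \le 1 + 2 + \dots + 2^k = 2^{k+1} - 1$ such that every component of $T \setminus W_0$ has order at most $n/2^k$. Let $D_1, \dots, D_t$ be those components of $T \setminus W_0$ whose order strictly exceeds $n/2^{k+1}$. Since the $D_i$ are pairwise vertex-disjoint with $|D_i| > n/2^{k+1}$, we have $t \cdot n/2^{k+1} < \sum_i |D_i| \le n$, forcing $t \le 2^{k+1} - 1$.

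For each $i \in [t]$, I apply \cref{lem:smallcompsbasecase} to the tree $D_i$ to produce a vertex $v_i \in V(D_i)$ such that every component of $D_i - v_i$ has order at most $|D_i|/2 \le n/2^{k+1}$. I then set $W \coloneqq W_0 \cup \{v_1, \dots, v_t\}$. Every component of $T \setminus W$ is either a component of $T \setminus W_0$ not among the $D_i$ (and hence has order at most $n/2^{k+1}$ by definition) or a component of $D_i - v_i$ for some $i$ (again of order at most $n/2^{k+1}$). Moreover, $|W| \le (2^{k+1} - 1) + t \le 2(2^{k+1} - 1) \le 1 + 2 + \dots + 2^{k+1}$, completing the induction.

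The only point that requires care is the bound on the number of big components $t$: a naïve approach that applies \cref{lem:smallcompsbasecase} recursively to every component produced at each stage would introduce a multiplicative factor depending on the branching at the chosen centroid, so it is important to recurse only on those components that are still too large and to exploit the vertex-disjointness bound $t \le 2^{k+1} - 1$. Once this is in place, the doubling of the size bound in the inductive step is precisely matched by the doubling in the budget $2^{k+1}$, and the induction closes cleanly.
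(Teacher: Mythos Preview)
Your proof is correct and follows essentially the same approach as the paper: induction on $k$, applying the inductive hypothesis to obtain $W_0$, then using \cref{lem:smallcompsbasecase} once inside each component that is still too large. The only cosmetic differences are that you take $W=\emptyset$ explicitly in the base case and that you record the slightly sharper bound $t\le 2^{k+1}-1$ (the paper is content with $t\le 2^{k+1}$), but the argument is otherwise identical.
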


\begin{proof}
    We proceed by induction on $k$. The case $k=0$ holds trivially. Suppose the statement holds true for some $k$, and we want to prove it for $k+1$. Consider the set $W'$ obtained by applying the result for $k$, so that $|W'|\leq 1+ 2 + \ldots + 2^k$ and all components of $T \setminus W'$ have order at most $n/2^k$. Let $C_1,\ldots,C_t$ denote the set of components of $T\setminus W'$ which have order larger than $n/2^{k+1}$. Clearly $t \leq 2^{k+1}$ since these components are pairwise vertex-disjoint and $|T|=n$. For each $i \in [t]$, we apply \cref{lem:smallcompsbasecase} to $C_i$ to find a vertex $v_i \in V(C_i)$ such that all components of $C_i -v_i$ have order at most $|C_i|/2 \leq n/2^{k+1}$. Then $W \coloneqq W' \cup \{v_1,\ldots,v_t\}$ has order  $|W'|+t \leq 1+ 2+\dots + 2^k + 2^{k+1}$ and all components of $T \setminus W$ have order at most $n/2^{k+1}$, as required.
\end{proof}

\begin{lemma}\label{lem:structurecompsorderm}
    Let $m,n\in\mathbb{N}$. For every $n$-vertex tree $T$ there exists a set $W \subseteq V(T)$ satisfying $|W| \leq \frac{4n}{m}$ and such that every component of $T \setminus W$ has order at most $m$.
\end{lemma}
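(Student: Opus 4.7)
The plan is to derive this lemma as an essentially immediate corollary of \cref{lem:smallcomponents}, choosing the parameter $k$ in that lemma to be the smallest non-negative integer for which $2^k \geq n/m$. Since \cref{lem:smallcomponents} gives us a set $W$ of size at most $2^{k+1}-1$ whose removal leaves components of order at most $n/2^k$, both desired bounds will fall out of a short arithmetic check: the component bound $n/2^k \leq m$ holds by the choice of $k$, and the minimality of $k$ forces $2^k < 2n/m$ (in the non-trivial range), giving $|W| \leq 2^{k+1} - 1 < 4n/m$.

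More concretely, first I would dispose of the degenerate case $n \leq m$ by simply taking $W = \emptyset$: then the unique component $T$ has order $n \leq m$, and $|W| = 0 \leq 4n/m$. So assume $n > m$ and let $k = \lceil \log_2(n/m) \rceil \geq 1$. Then I would apply \cref{lem:smallcomponents} with this value of $k$ to obtain a set $W \subseteq V(T)$ of size at most $1 + 2 + \ldots + 2^k = 2^{k+1} - 1$ such that every component of $T \setminus W$ has order at most $n/2^k$.

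It remains to check the numerics. From $2^k \geq n/m$ we get $n/2^k \leq m$, giving the component-size bound. For the upper bound on $|W|$, the minimality of $k$ gives $2^{k-1} < n/m$, hence $2^{k+1} < 4n/m$, so
\[
|W| \leq 2^{k+1} - 1 < \frac{4n}{m},
\]
as required.

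There is no real obstacle here; the only thing to be careful about is the edge case where $n/m$ is very small (so that \cref{lem:smallcomponents} with $k=0$ would allow $|W|=1$ while $4n/m$ might be less than $1$). This is handled cleanly by the separate $W=\emptyset$ case above, and no other subtleties arise.
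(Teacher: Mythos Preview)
Your proposal is correct and takes essentially the same approach as the paper: set $k = \lceil \log_2(n/m) \rceil$, apply \cref{lem:smallcomponents}, and check the arithmetic. If anything, you are slightly more careful than the paper in explicitly handling the trivial range $n \leq m$ with $W=\emptyset$, whereas the paper's one-line proof tacitly assumes $k \geq 0$.
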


\begin{proof}
    Let $k \coloneqq \lceil \log_2(n/m)\rceil$. Applying \cref{lem:smallcomponents} to $T$ we have that there exists a set $W$ of order at most $1+2+ \dots + 2^{k} = 2^{k+1} -1 \leq 2^{\log_2(n/m) +2} = 4n/m$, such that all components of $T\setminus W$ have order at most $n/2^k \leq n/2^{\log_2(n/m)} = m$.
\end{proof}

Finally we find a way to delete vertices from a forest $H$ so that the subgraph obtained consists precisely of disjoint copies of a new forest $F$ with a specified rooted structure.

\begin{lemma}\label{lem:deletevxscopyF}
Let $m,n\in\mathbb{N}$, $\zeta >0$ be such that $\zeta n \in \N$ and suppose $H$ is a forest with components of order at most $m$ and in each component we have specified a root. We can delete at most $ m^{m+1}\zeta n$ vertices from $H$ to get a copy of  $ F \times \zeta n$ for some forest $F$ which has rooted trees as components, and such that all roots in $F\times \zeta n$ were also roots in $H$. 
\end{lemma}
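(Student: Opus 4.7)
The plan is to delete whole components of $H$ so that the surviving components, grouped by rooted isomorphism type, split into exactly $\zeta n$ identical copies of some forest $F$. The point is that the number of rooted isomorphism types of trees on at most $m$ vertices is bounded in terms of $m$ alone, so the ``loss'' per type is at most $\zeta n - 1$ components, and summing over types gives the claimed bound.

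First I would bound the number $T$ of rooted isomorphism classes of trees on at most $m$ vertices by $T \leq m^m$. Since the number of labelled rooted trees on $k$ vertices is $k^{k-1}$ (and each unlabelled rooted isomorphism class contains at least one labelled representative), we have $T \leq \sum_{k=1}^{m} k^{k-1} \leq m \cdot m^{m-1} = m^m$. Let $\tau_1, \ldots, \tau_T$ enumerate these types and let $c_i$ be the number of components of $H$ that are isomorphic to $\tau_i$ as rooted trees.

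Next, set $a_i := \floor*{c_i/(\zeta n)}$ for each $i \in [T]$, and let $F$ be the rooted forest consisting of $a_i$ disjoint copies of $\tau_i$ for each $i$. For each $i$, keep exactly $a_i \cdot \zeta n$ of the components of $H$ of type $\tau_i$, and delete the remaining $c_i - a_i \zeta n$ components of that type. The surviving subforest, with its inherited roots, is then a copy of $F \times \zeta n$ in which every root was already a root of $H$, giving both conditions of the conclusion.

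Finally I would bound the number of deleted vertices. For each $i$, the number of deleted components of type $\tau_i$ is $c_i - a_i \zeta n < \zeta n$, so at most $\zeta n - 1$. Since each deleted component has at most $m$ vertices, the total number of deleted vertices is at most
\[
\sum_{i=1}^{T} m \cdot (\zeta n - 1) \leq T \cdot m \cdot \zeta n \leq m^{m+1}\zeta n,
\]
as required. There is no real conceptual obstacle here: the only place any estimate enters is the crude bound $T \leq m^m$, which matches the factor $m^{m+1}$ in the lemma exactly after multiplying by the maximum component size $m$.
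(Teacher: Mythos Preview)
Your proof is correct and follows essentially the same approach as the paper: bound the number of rooted isomorphism types by $m^m$ (the paper phrases this as ``at most $m^{m-1}$ trees of order at most $m$, with at most $m$ root choices each''), then for each type delete at most $\zeta n$ components so that the remaining count is divisible by $\zeta n$, giving a total of at most $m^m \cdot m \cdot \zeta n = m^{m+1}\zeta n$ deleted vertices.
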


\begin{proof}
    By Cayley's formula there exist at most $m^{m-1}$ trees of order at most $m$, and for each of them there are at most $m$ ways to select a root. So there are at most $m^{m}$ types of rooted trees amongst the components of $H$. For each such rooted tree $T$, we delete at most $\zeta n$ copies of $T$ so that the remaining number of copies is divisible by $\zeta n$. The remaining graph is a copy of $F\times \zeta n$ for some forest $F$ where all components are rooted trees. In total we have deleted at most $m^{m} \times m \times \zeta n = m^{m+1} \zeta n $ vertices to obtain this.
\end{proof}

We are now ready to prove \cref{lem:structure4}, combining the properties we have seen already.
\begin{proof}[Proof of \cref{lem:structure4}]
   Let $\delta>0$, $m \coloneqq \frac{10}{\delta}$, $\zeta_0 \coloneqq \frac{2\delta}{5m^{m+1}}$ and let $n_0$ be sufficiently large.
   Let $n>n_0$ and $\zeta<\zeta_0$ be such that $\zeta n\in \N$, and let $T$ and $S$ be as in the statement of the lemma. We construct $W$ by deleting a bounded number of vertices. 
   By \cref{lem:structureoneedgeJtree}, there exists a set $W_1 \subseteq V(T) \setminus S$ of order at most $2|S|$ such that every component of $T \setminus (S \cup W_1)$ touches at most one edge that is incident to $S$. By \cref{lem:structurecompsorderm}, there exists a set $W_2\subseteq V(T)$ of order at most $4n/m$ such that all components in $T\setminus W_2$ have order at most $m$. So, each component in $H \coloneqq T\setminus (S\cup W_1 \cup W_2)$ has at most one vertex belonging in $N_T(S)$, and if such a vertex exists then call this the root, otherwise arbitrarily choose a root within the component. Then we can apply \cref{lem:deletevxscopyF} to $H$ to obtain a set $W_3 \subseteq V(H)$ of order at most $m^{m+1} \zeta n$ such that $H \setminus W_3 = T \setminus (S \cup W_1 \cup W_2 \cup W_3) \cong F \times \zeta n$ for some forest $F$ with rooted trees as components, where only the roots belong in $N_T(S)$. Clearly every component of $H\setminus W_3$ is a subgraph of a component of $H$, so using the property of $W_1$, we also know that every root has at most one neighbour in $S$. Take $W \coloneqq ( W_1 \cup W_2 \cup W_3)\setminus S$ to get 
   $|W| \leq 2|S|+\frac{4n}{m} + m^{m+1}\zeta n \leq \left(\frac{2\delta}{10} +\frac{4\delta}{10} +\frac{4\delta}{10} \right) n = \delta n$. Therefore $T \setminus W$ satisfies the desired conditions.
\end{proof}

\section{Finding rainbow subgraphs}\label{sec:TnotW}

\subsection{Using hypergraphs for matchings}
Observe that given a forest $F$ and some integer $d$, a rainbow copy of $F \times d$ is exactly the union of $e(F)$ colour-disjoint rainbow matchings, where each matching has size $d$ and corresponds to a unique edge in $F$. By using \cref{lem:structure4}, if we can find rainbow matchings in $K_{[n]}$, this will give us tools for embedding parts of trees in a rainbow way.
It will be helpful to consider $3$-uniform $3$-partite hypergraphs by making the following observation. Consider two disjoint vertex sets $A,B \subseteq [n]$ and a colour set $C \subseteq C(K_{[n]})$, let $K_{[n]}[A,B,C]$ denote the edge-coloured subgraph of $K_{[n]}$ containing only edges between $A$ and $B$ which have colour in $C$. We can consider the corresponding $3$-uniform $3$-partite hypergraph, denoted by $H_{[n]}[A,B,C]$, with vertex classes $A, B $ and $ C$ such that for all $a \in A,\   b\in B,\  c\in C$, the size-three edge $abc$ is an edge in $H_{[n]}[A,B,C]$ if and only if the edge $ab \in E(K_{[n]})$ has colour $c$, or equivalently $c = |b-a|$. 
In particular, there is a mapping $\theta : E(K_{[n]}[A,B,C]) \rightarrow E(H_{[n]}[A,B,C])$ which maps an edge $ab \in E(K_{[n]}[A,B,C])$ to the size-three edge $\{a,b,|b-a|\}\in E(H_{[n]}[A,B,C])$ where $|b-a| \in C$. Thus we observe the following.

\begin{observation}\label{obs:Matchings}
    A subset $M \subseteq E(K_{[n]}[A,B,C])$ is a rainbow matching if and only if $\theta(M)$ is a hypergraph matching in $H_{[n]}[A,B,C]$.
\end{observation}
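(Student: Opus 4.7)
The statement is essentially a definitional observation, so the plan is to verify the equivalence by unfolding what ``rainbow matching'' and ``hypergraph matching'' mean and checking that the two sets of conditions coincide under $\theta$. First I would record the basic properties of the map $\theta$: for every edge $ab \in E(K_{[n]}[A,B,C])$ with $a \in A, b \in B$, the colour $|b - a|$ lies in $C$ by definition of $K_{[n]}[A,B,C]$, so $\theta(ab) = \{a,b,|b-a|\}$ is indeed a hyperedge of $H_{[n]}[A,B,C]$; moreover $\theta$ is injective, since the hyperedge uniquely determines its $A$-coordinate and its $B$-coordinate, and hence determines $ab$.

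Next I would prove both directions of the equivalence by a single simultaneous argument. Take any two distinct edges $a_1 b_1, a_2 b_2 \in M$ (with $a_i \in A$, $b_i \in B$). Since $K_{[n]}[A,B,C]$ is bipartite between $A$ and $B$, vertex-disjointness of these edges in $K_{[n]}$ is equivalent to the conjunction $a_1 \neq a_2$ and $b_1 \neq b_2$. On the hypergraph side, since $\theta(a_ib_i) = \{a_i, b_i, |b_i - a_i|\}$ and the three coordinates lie in the three distinct parts $A, B, C$ of $H_{[n]}[A,B,C]$, vertex-disjointness of $\theta(a_1b_1)$ and $\theta(a_2b_2)$ is equivalent to the conjunction $a_1 \neq a_2$, $b_1 \neq b_2$, and $|b_1 - a_1| \neq |b_2 - a_2|$. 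Matching these two lists, we see that $M$ being a (rainbow) matching in $K_{[n]}[A,B,C]$ is equivalent to the hyperedges $\theta(a_1b_1), \theta(a_2b_2)$ being vertex-disjoint for every such pair, which in turn is exactly the statement that $\theta(M)$ is a matching in $H_{[n]}[A,B,C]$.

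There is no real obstacle here; the only thing to be careful about is to not double-count vertex-disjointness in the bipartite graph and the colour condition separately when comparing with the hypergraph picture. The cleanest way to present this, which I would use, is to list the three pairwise-distinctness conditions $a_1 \neq a_2$, $b_1 \neq b_2$, $|b_1 - a_1| \neq |b_2 - a_2|$ once, and observe that the left-hand side of the equivalence asks for all three (the first two encoding the matching property and the third the rainbow property) while the right-hand side also asks for all three (one per part of the tripartition of $V(H_{[n]}[A,B,C])$).
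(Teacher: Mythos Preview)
Your proposal is correct and matches the paper's intended reasoning; the paper in fact states this as an observation without giving an explicit proof, simply noting that $\theta$ sends an edge $ab$ to $\{a,b,|b-a|\}$ and trusting the reader to unfold the definitions exactly as you have done. Your careful listing of the three pairwise-distinctness conditions and their correspondence to the matching and rainbow properties on one side and the tripartite hypergraph matching condition on the other is precisely the content the paper leaves implicit.
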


In order to use this observation, we will first need some preliminary lemmas about $K_{[n]}[A,B,C]$.

\begin{lemma}\label{lem:spanninggraph}
    Let $s,n\in \mathbb{N}$ be such that $s\leq n/4$ and $n$ is even. Let $A \coloneqq [1,\frac{n}{2}]$, $B \coloneqq [\frac{n}{2} + 1,{n}]$ and $C\coloneqq [n-1]$. There exists a spanning subgraph $G \subseteq K_{[n]}[A,B,C]$ such that $\Delta(G)=2s$, every vertex in $[2s,\frac{n}{2}-s] \cup [\frac{n}{2}+2s,n-s]$ has degree $2s$, every colour in $[2s,n-2s]$ appears on exactly $s$ edges, no colour in $C$ appears on more than $s$ edges, and no edge has colour $1$.
\end{lemma}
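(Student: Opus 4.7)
The plan is to construct $G$ explicitly by specifying, colour-by-colour, which edges to include. For each $c \in [2, n-1]$, let $R_c := \{a \in A : a+c \in B\}$ denote the set of valid $A$-endpoints for an edge of colour $c$; concretely $R_c = [n/2+1-c,\, n/2]$ of size $c$ when $c \le n/2$, and $R_c = [1,\, n-c]$ of size $n-c$ when $c \ge n/2$. I choose a subset $I_c \subseteq R_c$ and include exactly the edges $\{(a, a+c) : a \in I_c\}$, with $|I_c| = \min(|R_c|, s)$. Omitting $c = 1$ entirely handles the no-colour-$1$ condition, and the cap $|I_c| \le s$ immediately gives that no colour is used more than $s$ times; moreover each middle colour $c \in [2s, n-2s]$ satisfies $|R_c| \ge 2s$, so is used exactly $s$ times.

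The critical design choice is the specific $I_c$ for middle colours. I would pick $I_c$ to be a size-$s$ interval within $R_c$ whose position depends on $c$ via a sliding schedule, so that each middle vertex of $A$ and of $B$ is covered exactly $2s$ times by the resulting edge set. For non-middle colours I take $I_c$ to be a natural contiguous end-portion of $R_c$; extreme-small colours then produce edges only at the top of $A$ and bottom of $B$, while extreme-large colours produce edges only at the bottom of $A$ and top of $B$. This puts all extreme-colour edges on the non-middle (boundary) vertices, except for some unavoidable overflow from extreme-small colours into the top of middle $A$ (and by symmetry for $B$), which the sliding schedule for the middle colours must compensate for.

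Verification splits into four parts: (a)~no colour-$1$ edge and (b)--(c)~the colour-count conditions are immediate from the construction. For part (d), the degree conditions, I would perform a case analysis on $a \in A$ (and symmetrically on $b \in B$), summing the contributions from extreme-small, middle, and extreme-large colours. The asymmetric ``boundary'' of the lemma ($2s-1$ vertices at the low end and $s$ at the high end of each of $A$ and $B$) is exactly what matches the asymmetric extreme-colour contributions, once one accounts for the omitted colour-$1$ edge $(n/2, n/2+1)$. A preliminary counting check shows the required total middle-colour degree on middle $A$ and the total middle-colour budget $s(n-4s+1)$ are compatible, with the excess fitting within the non-middle $A$ degree budget, and likewise for $B$.

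The main obstacle is pinning down a clean ``shift schedule'' for the $I_c$ of middle colours so the degree totals land exactly on target for every vertex, not just on average. A transparent alternative is to reformulate the existence of $G$ as a bipartite degree-constrained subgraph problem---with row-sum (colour) constraints $=s$ for middle colours and $\le s$ otherwise, and column-sum (vertex) constraints $=2s$ on middle vertices and $\le 2s$ otherwise---and invoke a Gale--Ryser-type feasibility theorem; but I expect the paper to prefer the explicit construction, since later proofs are likely to exploit the detailed structure of how edges of each colour are distributed across $A$ and $B$.
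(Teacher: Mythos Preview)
Your plan follows the same colour-by-colour skeleton as the paper, but what you present is a sketch rather than a proof: the step you yourself flag as ``the main obstacle''---specifying the sliding schedule for the intervals $I_c$ and verifying that every middle vertex lands on degree exactly $2s$---is the entire content of the lemma, and you leave it open. The Gale--Ryser alternative is also not carried out, and would in any case be non-constructive, whereas the paper does want the explicit structure.

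The paper's missing ingredient is a specific and clean choice: for each colour $c$ (including the non-middle ones), take the $A$-endpoints to be the integers among $\tfrac{n-c+1}{2},\tfrac{n-c+2}{2},\ldots,\tfrac{n-c+2s}{2}$, i.e.\ the $s$ integers in $\bigl[\tfrac{n-c+1}{2},\tfrac{n-c}{2}+s\bigr]$, and then intersect with $A\times B$. The point is that as $c$ increases by $1$, this interval slides left by exactly $\tfrac12$; consequently, for a fixed $x\in A$, the set of colours $c$ with $x$ as an $A$-endpoint is $\{n+i-2x : i\in[2s]\}$, a run of $2s$ consecutive integers. For $x\in[2s,\tfrac{n}{2}-s]$ this run lies entirely inside $[2s,n-2s]$, so none of these $2s$ edges are lost when intersecting with $A\times B$, giving degree exactly $2s$. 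The symmetric calculation works for $B$. The non-middle colours are handled automatically by the same formula (no separate treatment needed), and excluding $c=1$ affects only the vertices $n/2$ and $n/2+1$, which are outside the designated middle ranges. This half-step slide is what makes the vertex degrees come out exactly right rather than only on average; it is the idea your sketch is missing.
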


\begin{proof} Note that $C = C(K_{[n]})$ so $C$ contains all possible colours in $K_{[n]}$. We will find our desired spanning subgraph $G$ to be contained in $ K_{[n]}[A,B,C]$, meaning that all edges will lie between $A$ and $B$.

For every $c \in C$, let $E_c \coloneqq \left \{\left( \frac{n-c+i}{2} , \frac{n+c+i}{2} \right) : i = 1,2, \ldots, 2s \right \} \cap \mathbb{Z}^2$.
Then $|E_c| = s$ for all $ c \in C$. Indeed, both of the values $n-c$ and $n+c$ are odd or they are both even, implying that the terms $\frac{n-c+i}{2} $ and $\frac{n+c+i}{2}$ are integer valued exactly when $i$ is odd or even respectively. Furthermore any edge in $E_c \cap E(K_{[n]})$ has colour $c$, since for a fixed $i$, we have $\frac{n+c+i}{2} - \frac{n-c+i}{2} = \frac{2c}{2} = c$.

    Now, let $C' \coloneqq [2s,n-2s]$ and fix $c \in C'$. For any $i \in [2s]$, note that
    $$ \frac{n-c+i}{2} \geq \frac{n-(n-2s)+i}{2} \geq \frac{2s+1}{2}\geq 1,$$ 
    $$\frac{n-c+i}{2} \leq \frac{n-2s+i}{2} \leq \frac{n-2s+2s}{2} =\frac{n}{2},$$
    $$ \frac{n+c+i}{2} \geq \frac{n+2s+i}{2} \geq \frac{n+ 2s+1}{2}\geq \frac{n}{2}+1,$$ 
    $$\frac{n+c+i}{2} \leq \frac{n+(n-2s)+i}{2} \leq \frac{2n-2s+2s}{2} =n.$$
    
    This shows that $E_c \subseteq A \times B $ for all $c \in C'$, i.e.\ every pair in $E_c$ forms an edge in $K_{[n]}$ of colour $c$, going from $A$ to $B$.
    Let $G$ be the spanning subgraph of $K_{[n]}$ defined by its edge set $$E(G) = \bigcup_{c\in C \setminus \{1\}}E_c \cap (A \times B).$$ So $G$ can be constructed by first taking edges from all $E_c \cap E(K_{[n]})$ sets, except for when $c= 1$, and deleting edges not lying between $A$ and $B$. Thus no edge in $G$ has colour $1$. Since $n$ is even, we have $E_1 \cap (A\times B) = \{(n/2,n/2+1)\}$, so by not adding this edge set to $G$ we only affect the degrees of only two vertices, namely $n/2$ and $n/2+1$, neither of which belong in $[2s,\frac{n}{2}-s] \cup [\frac{n}{2}+2s,n-s]$. For any $c \in C'$, since $E_c \subseteq A \times B $, then this implies $E_c \subseteq E(G)$ and in particular every colour $ c \in C'$ occurs on exactly $ |E_c|= s$ edges of $G$, as desired. 
    Any colour in $C\setminus C'$ clearly appears on at most $s$ edges.
    It remains to check the degrees of vertices in $A$ and $B$.

    First consider $A' \coloneqq [2s, \frac{n}{2}-s]$, and fix $x \in A'$. Let $C(x) \coloneqq \{c\in C : \frac{n-c+i}{2} = x \text{ and } \frac{n+c+i}{2} \in B \text{ for some } i \in [2s]\}$. Note that
    \begin{equation*}
          \frac{n-c+i}{2} = x \iff c = n+i - 2x,  
    \end{equation*}
    so we deduce that any $c \in C(x)$ satisfies $c \leq n+2s -2x \leq n+2s -4s = n-2s$ and $c \geq n+1 - 2x \geq n+1 - (n-2s) = 1+2s$.
    This implies $C(x) \subseteq C'$ and $1\notin C(x)$. In particular $C(x)$ is the set of colours $c\in C$ for which there exists $y \in B$ such that $xy \in E(G)$ has colour $c$. If there exist two edges in $G$ of colour $c$ that contain $x$, then there exist distinct $y,y'\in B$ such that $xy$ and $xy'$ are edges of colour $c$. By choice of $A$ and $B$, we know $y\geq x$ and $y'\geq x$ and so $c=y-x = y' - x$, implying $y=y'$, a contradiction. Thus for each colour $c \in C(x)$, there is exactly one edge in $G$ of colour $c$ with $x$ as an endvertex. So in total the number of edges in $G$ containing $x$ is $|C(x)| = 2s$, and in particular we have $d_G(x) = 2s$ for every $x\in A'$.

    Similarly, let $B' = [\frac{n}{2}+2s, n-s]$, and fix $y\in B'$. Let $D(y) \coloneqq \{c\in C : \frac{n+c+i}{2} = y \text{ and } \frac{n-c+i}{2} \in A \text{ for some } i \in [2s]\}$.
    Note that
    $$
    \frac{n+c+i}{2} = y \iff c = 2y - (n+i),
    $$
    so we deduce that any $c \in D(y)$ satisfies $c \leq 2y - (n+1)  \leq 2n-2s -n -1 = n-2s -1$ and $c \geq 2y - (n+2s) \geq n+ 4s - n - 2s = 2s$. Thus $D(y) \subseteq C'$ and $1\notin D(y)$. We may argue  similarly that there exists exactly one edge in $G$ of each colour in $D(y)$ which contains $y$. So there are $|D(y)| =2s$ edges in $G$ containing $y$ and we deduce that $d_G(y) = 2s$ for all $y \in B'$.
    Finally all vertices in $(A\setminus A') \cup (B\setminus B')$ are contained in at most $2s$ edges by choice of the $E_c$ sets, so $\Delta(G) \leq 2s$. Thus the graph $G$ satisfies our desired properties.
\end{proof}

Consider again the subgraph $K_{[n]}[A,B,C]$ of $K_{[n]}$ for disjoint vertex sets $A,B\subseteq [n]$ and a colour set $C \subseteq C(K_{[n]})$.
If every element of $A$ is smaller than every element of $B$, then observe that the corresponding hypergraph $H_{[n]}[A,B,C]$ is linear. We would like to find matchings in this hypergraph, in accordance with \cref{obs:Matchings}. We will use the following variant of an unpublished result from Pippenger (see e.g.\ \cite{alon_hypergaphs}) in order to find almost perfect matchings in linear uniform hypergraphs. 

\begin{theorem}\label{thm:Pippenger}
    Let $n^{-1}\ll \gamma \ll \alpha,\mu, r^{-1}$.
        Every $n$-vertex linear $r$-uniform hypergraph $H$ with maximum degree at most $(1+\gamma)\alpha n$ and at most $\mu n$ vertices of degree less than $(1-\gamma)\alpha n$ has a matching covering all but at most $2\mu n$ vertices.
\end{theorem}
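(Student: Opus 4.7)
This is a robust version of the Pippenger--Spencer near-matching theorem for linear $r$-uniform hypergraphs, which allows a small $\mu$-fraction of exceptional low-degree vertices. The plan is to reduce to a standard Pippenger--Spencer / R\"odl nibble statement for near-regular linear uniform hypergraphs, absorbing the low-degree vertices into the slack permitted by the $2\mu n$ target on uncovered vertices.

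\textbf{Step 1 (isolate the low-degree vertices).} Let $L \subseteq V(H)$ be the set of vertices of degree less than $(1-\gamma)\alpha n$; by hypothesis $|L| \leq \mu n$. Since any matching of $H$ can be allowed to miss all of $L$, contributing at most $\mu n$ to the uncovered count, it suffices to find a matching of $H$ missing at most $\mu n$ vertices outside $L$.

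\textbf{Step 2 (restrict to good vertices and control degrees).} Consider the subhypergraph $H' := H[V(H)\setminus L]$ consisting of the edges of $H$ lying entirely inside $V(H)\setminus L$; both linearity and $r$-uniformity are inherited. For each good vertex $v \in V(H)\setminus L$, linearity ensures that every $u \in L$ lies in at most one edge of $H$ together with $v$, so removing the edges that touch $L$ lowers $d(v)$ by at most $|L| \leq \mu n$. Hence $d_{H'}(v) \in \bigl[(1-\gamma)\alpha n - \mu n,\; (1+\gamma)\alpha n\bigr]$, which is a $\Theta(1)$-multiplicative window around $\alpha n$.

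\textbf{Step 3 (invoke Pippenger).} Apply a Pippenger-type almost-matching theorem to $H'$ to produce a matching covering all but at most $\mu n$ of its vertices; combined with $L$, at most $2\mu n$ vertices of $H$ are uncovered, as required. \textbf{The main obstacle is here:} the degree window in $H'$ has multiplicative width $\Theta(\mu/\alpha)$, a constant rather than $o(1)$, so the tightest form of Pippenger--Spencer does not apply out of the box. To overcome this I would either (a) invoke a more robust formulation of Pippenger's theorem that directly tolerates a $\mu n$-fraction of low-degree exceptional vertices (standard in the probabilistic combinatorics literature, e.g.\ Alon--Spencer or F\"uredi's survey on matchings in hypergraphs), or (b) perform a preliminary random thinning where each edge of $H'$ is retained independently with probability $p := D_\star/\bigl((1+\gamma)\alpha n\bigr)$ for $D_\star := (1-\gamma)\alpha n - \mu n$, using the Chernoff bound (\cref{thm:chern}) to certify that the thinned hypergraph has degrees concentrated around $p D_\star$ on all but $o(n)$ vertices, before applying the classical near-regular Pippenger--Spencer theorem. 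The linearity of $H$ (codegree at most $1$, which is $o(\alpha n)$ since $\gamma \ll \alpha, r^{-1}$ and $n$ is large) is exactly the codegree hypothesis required by either route.
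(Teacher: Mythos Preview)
Your reduction in Steps 1--2 is fine, but Step 3 does not go through as written, and the paper takes a quite different route that sidesteps the difficulty entirely.

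\textbf{The gap.} In option (b) you claim that retaining each edge of $H'$ independently with probability $p = D_\star/\bigl((1+\gamma)\alpha n\bigr)$ yields degrees ``concentrated around $p D_\star$''. This is not what uniform thinning does. A vertex $v$ with $d_{H'}(v)=d$ ends up with degree $\mathrm{Bin}(d,p)$, concentrated around $pd$, not around $pD_\star$; so the thinned degrees lie in $[pD_\star \pm o(n),\ D_\star \pm o(n)]$, and the multiplicative window is still $D_\star/(pD_\star)=1/p \approx 1+\mu/\alpha$, exactly the same constant-width window you started with. Uniform random thinning scales all degrees proportionally and cannot turn a $(1\pm\Theta(\mu/\alpha))$-regular hypergraph into a $(1\pm o(1))$-regular one. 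Option (a) is essentially circular: a ``robust Pippenger that tolerates a $\mu$-fraction of low-degree exceptional vertices'' is precisely the statement you are asked to prove, and the standard references you cite (Alon--Spencer, F\"uredi) give the near-regular version, not this one.

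\textbf{What the paper does instead.} The paper never tries to regularize. It invokes the Molloy--Reed bound $\chi'(H)\leq \Delta(H)+c\,\Delta(H)^{1-1/r}(\log\Delta(H))^4$ for linear $r$-uniform $H$, so $\chi'(H)\leq (1+o(1))\alpha n$. On the other hand, since at least $(1-\mu)n$ vertices have degree at least $(1-\gamma)\alpha n$, one gets $e(H)\geq (1-\mu)(1-\gamma)\alpha n^2/r$. By pigeonhole some colour class (a matching) has size at least $e(H)/\chi'(H)\geq (1-2\mu)n/r$, covering at least $(1-2\mu)n$ vertices. The point is that the chromatic-index route needs only the maximum-degree and codegree hypotheses, while the low-degree vertices are handled purely through the edge-count lower bound---no regularization is required.
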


\begin{proof}
     Let $H$ be as in the statement of the lemma. A result of Molloy and Reed (see \cite[Theorem 1]{Molloy_listcolouring}), tells us that the chromatic index $\chi'(H)$ is at most $\Delta + c\Delta^{1-1/r}(\log\Delta)^4$ for some constant $c$, where $\Delta $ is the maximum degree of $H$. Thus there exists a proper edge-colouring of $H$ using at most $ \chi'(H)\leq (1+\gamma)\alpha n + ((1+\gamma)\alpha n)^{1-1/(r+1)}$ colours, by choosing $n$ sufficiently large. Each colour class forms a matching. Since at least $(1-\mu)n$ vertices in $H$ have degree at least $(1-\gamma)\alpha n$, then $e(H) \geq (1-\mu)(1-\gamma)\alpha n^2/r$.  
  Thus there is a colour class of size at least \begin{align*}
      \frac{e(H)}{\chi'(H)} \geq \frac{(1-\mu)(1-\gamma)\alpha n^2/r}{(1+\gamma)\alpha n + ((1+\gamma)\alpha n)^{1-\frac{1}{r+1}}} &= (1-\mu)\frac{n}{r}\left(\frac{1-\gamma}{1+\gamma + (1+\gamma)^{1-\frac{1}{r+1}}(\alpha n)^{-\frac{1}{r+1}}}\right)\\
      &\geq (1-\mu)\frac{n}{r}\left(\frac{1-\gamma}{1+3\gamma}\right)\\
      & \geq (1-\mu)^2\frac{n}{r}\\
      & \geq (1-2\mu)\frac{n}{r}.
  \end{align*} 
  The second line holds by assuming $n$ is sufficiently large such that $(1+\gamma)^{1-\frac{1}{r+1}}(\alpha n)^{-\frac{1}{r+1}} \leq 2\gamma$, and the third holds since $\frac{1-\gamma}{1+3\gamma} \geq 1- 4\gamma \geq 1 - \mu$.
Since this colour class contains at least $(1-2\mu)n/r$ pairwise disjoint edges, each of size $r$, then the number of vertices covered by this matching is at least $(1-2\mu)n$. Thus the matching covers all but at most $2\mu n$ vertices in $H$.
\end{proof}

\begin{lemma}\label{lem:mathcingSiSjv3} Let $n^{-1}\ll p,\mu <1$ be such that $n$ is even. 
    Suppose $S_1, S_2 \subseteq [n]$ are $p$-random and disjoint. Then with probability $1-o(1)$ there exists a rainbow matching in $K_{[n]}[S_1,S_2,S_2]$ such that no edge in the matching has colour $1$, and it covers all but at most $ 2\mu\max\{|S_1|,|S_2|\}$ vertices in $S_1$ and all but at most $2 \mu\max\{|S_1|,|S_2|\}$ vertices in $S_2$.
\end{lemma}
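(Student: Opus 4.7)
I plan to deduce the lemma from \cref{obs:Matchings}, \cref{lem:spanninggraph} and \cref{thm:Pippenger}. Two obstacles prevent a direct application of Pippenger to $H_{[n]}[S_1,S_2,S_2]$: that hypergraph is not linear (a vertex together with a colour determines two possible partners, $\pm$ the colour), and even after passing to a linear sub-hypergraph supplied by \cref{lem:spanninggraph}, the colour-vertices have hyperdegree a factor two smaller than the vertex-vertices (since each graph edge carries one colour but two endpoints). I resolve the first by using the partition $A=[1,n/2]$, $B=[n/2+1,n]$ of \cref{lem:spanninggraph} so that the colour $c=b-a$ of an edge is unambiguous, and the second by randomly halving the colour set.

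Formally, I re-sample $S_1,S_2^I,S_2^{II}$ as pairwise disjoint random subsets of $[n]$ with inclusion probabilities $p,p/2,p/2$, and set $S_2:=S_2^I\cup S_2^{II}$, which recovers the correct $p$-random distribution for $(S_1,S_2)$. Apply \cref{lem:spanninggraph} with $s=\mu^3 n$ to produce $G\subseteq K_{[n]}[A,B,C]$, and consider the $3$-uniform $3$-partite sub-hypergraph $H^I$ of $H_{[n]}[A,B,C]$ with parts $S_1\cap A$, $S_2\cap B$ and $S_2^I\cap[n-1]$, whose hyperedges are triples $\{a,b,c\}$ with $ab\in E(G)$, $a\in S_1\cap A$, $b\in S_2\cap B$ and $c=b-a\in S_2^I$; define $H^{II}$ symmetrically on $S_2\cap A$, $S_1\cap B$ and $S_2^{II}\cap[n-1]$. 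Both hypergraphs inherit linearity from $H_{[n]}[A,B,C]$. Every central graph-vertex has expected hyperdegree $2s\cdot p\cdot(p/2)=sp^2$ in $H^I$, and every central colour has expected degree $s\cdot p\cdot p=sp^2$, so the halved colour set exactly absorbs the factor-of-two mismatch in \cref{lem:spanninggraph}; all three parts have expected size $\approx pn/2$, so $H^I$ is nearly regular with target degree $sp^2$.

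It remains to verify the hypotheses of \cref{thm:Pippenger} with probability $1-o(1)$. Chernoff controls part sizes. Each hyperdegree of $H^I$ is a $2$-Lipschitz function of the independent placement variables $(Z_w)_{w\in[n]}$ (changing $Z_w$ affects only the indicators involving $w$), so McDiarmid yields exponential concentration of degrees around their expectations; a union bound, together with the fact that only $O(s)=O(\mu^3 n)$ vertices/colours lie outside the central regime of \cref{lem:spanninggraph}, shows that $H^I$ satisfies Pippenger's hypothesis with $\alpha=\Theta(\mu^3 p)$ and error parameter $\mu'=\mu/C$ for a large enough constant $C=C(\mu,p)$. \Cref{thm:Pippenger} thus produces a hypergraph matching $M^I$ of $H^I$ missing at most an $O(\mu')$-fraction of each part; under $\theta^{-1}$ from \cref{obs:Matchings} this becomes a rainbow matching $\mathcal{M}^I$ in $K_{[n]}[S_1\cap A,S_2\cap B,S_2^I]$ covering all but $O(\mu'pn)$ vertices of $S_1\cap A$ and of $S_2\cap B$. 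Building $\mathcal{M}^{II}$ symmetrically, the union $\mathcal{M}^I\cup\mathcal{M}^{II}$ is vertex-disjoint (the two vertex sides live in disjoint pieces of $(S_1\cup S_2)\cap(A\cup B)$) and colour-disjoint (since $S_2^I\cap S_2^{II}=\emptyset$), uses no colour $1$ (since $G$ does not), and after choosing $C$ large enough covers all but $\leq 2\mu\max\{|S_1|,|S_2|\}$ vertices in each of $S_1$ and $S_2$. The principal technical step is the degree concentration: indicators such as $\mathbb{1}[b\in S_2]$ and $\mathbb{1}[b-v\in S_2^I]$ are correlated via $S_2^I\subseteq S_2$, so Chernoff does not apply directly and one must invoke McDiarmid on the independent placement variables.
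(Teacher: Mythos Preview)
Your proposal is correct and follows essentially the same approach as the paper. Both arguments apply \cref{lem:spanninggraph} to obtain a near-regular linear structure on $A=[1,n/2]$, $B=[n/2+1,n]$; halve the colour class to equalise the expected hyperdegrees of vertices and colours; build two $3$-partite linear hypergraphs (one on $S_1\cap A$, $S_2\cap B$ and half the colours, the other on $S_2\cap A$, $S_1\cap B$ and the complementary half); verify the Pippenger hypotheses via Chernoff/McDiarmid concentration and a union bound; and take the vertex- and colour-disjoint union of the two resulting matchings. The only cosmetic difference is that the paper introduces the colour split via an independent $\tfrac12$-random partition $P\cup Q$ of $[n-1]$ (setting $P_2:=S_2\cap P$, $Q_2:=S_2\cap Q$), whereas you realise it as a coupling $S_2=S_2^I\cup S_2^{II}$; these are distributionally equivalent.
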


\begin{proof}  Choose $\gamma \in (0,1)$ such that $n^{-1}\ll \gamma \ll p,\mu$ and let $n,S_1,S_2$ be as in the statement of the lemma.  Let $A \coloneqq [1,\frac{n}{2}]$, $B \coloneqq [\frac{n}{2} + 1,{n}]$ and $C\coloneqq [n-1]$.
Let $m \coloneqq 3pn/2$ and $s\coloneqq \mu m/20$, noting that $s\leq n/4$. Let $G \subseteq K_{[n]}[A,B,C]$ denote the spanning subgraph obtained by applying \Cref{lem:spanninggraph} and take $R\subseteq H_{[n]}[A,B,C]$ to be the $3$-partite $3$-uniform hypergraph corresponding to $G$, that is, there is an edge $abc \in E(R)$ if and only if $c=|b-a|$ and $ab \in E(G)$ for some $a\in A,\ b\in B$ and $c\in C$. We will consider two vertex-disjoint colour-disjoint random subhypergraphs of $R$ and find large matchings within both of these. We will then combine them to form an almost perfect matching $M$ in $H_{[n]}[S_1,S_2,S_2]$. By \Cref{obs:Matchings}, this then corresponds to an almost perfect rainbow matching $\theta^{-1}(M)$ in $K_{[n]}[S_1,S_2,S_2]$. 
    For that purpose, let us define various random subsets of vertices. 
    
    For $i\in\{1,2\}$, let $A_{i} := A \cap S_{i}$ and $B_i := B \cap S_{i}$. Partition $C$ into two $\frac{1}{2}$-random subsets $P$ and $Q$, and define $P_2 \coloneqq S_2 \cap P$ and $Q_2 \coloneqq S_2 \cap Q$.
    Note that, for any $a \in A$, $ b \in B$ and for any $i,j\in [2]$, we have $\mathbb{P}[a \in A_i] =\mathbb{P}[b \in B_j] = p $ and the corresponding events are independent. Also, for any $c \in C$, we have $\mathbb{P}[c\in P_2] = \mathbb{P}[c\in Q_2] = \frac{p}{2}$.
    We will consider two $3$-partite $3$-uniform subhypergraphs of $R$ induced on these subsets, defined by $H_P \coloneqq R[A_1,B_2, P_2]$ and $H_Q \coloneqq R[A_2,B_1, Q_2]$.
    As noted earlier, both $H_P$ and $H_Q$ are linear hypergraphs, since all elements in $A$ are smaller than all elements in $B$. We claim that these hypergraphs both contain a large matching with high probability.
\begin{claim}\label{claim:matchingsMPv3}
        With probability $1-o(1)$\ there is both a hypergraph matching $M_P \subseteq E(H_P)$ covering all but at most $2\mu|H_P|$ vertices of $H_P$, and a hypergraph matching $M_Q \subseteq E(H_Q)$ covering all but at most $2\mu|H_Q|$ vertices of $H_Q$.
    \end{claim}
    
  \begin{proofclaim}  
    We will only prove that with high probability the matching $M_P$ with the desired property inside $H_P$ exists, since one can then apply the same method to $H_Q$ to find the matching $M_Q$, and take a union bound to get that with high probability they both exist, as desired to prove the claim. In order to find $M_P$, we will use \cref{thm:Pippenger}. To apply this, we first need to show that the hypergraph $H_P$ satisfies various properties. We will show that the following properties hold with high probability.
    \begin{enumerate}[label = \upshape{(P\arabic*)}, leftmargin = \widthof{R100000}]
    \item $|H_P| = (1\pm \gamma)m$. \label{ob1}
            \item $H_P$ has maximum degree at most $  (1+\sqrt{\gamma})\frac{\mu p^2 }{20}|H_P|$. \label{ob2}
        \item All vertices in the set $\left(A_1 \cap \left[2s, \frac{n}{2}-s\right]\right) \cup \left(B_2 \cap \left[\frac{n}{2}+2s,n-s\right]\right) \cup \left(P_2 \cap \left[2s,n-2s\right] \right)$ have degree in the interval ${(1\pm\sqrt{\gamma})\frac{\mu p^2}{20}|H_P|}$ in $H_P$. \label{ob3}
    \end{enumerate}
    Note that $\mathbb{E}[|A_1|] =pn/2=m/3$ and hence a simple application of the Chernoff bound tells us that $|A_1|$ lies outside of the range $ (1\pm \gamma)m/3$ with probability $2e^{-\Omega(n)}$. By the same argument, the same holds for $|B_2|$ and $|P_2|$. Thus applying a union bound, \ref{ob1} holds with probability at least $ 1-6e^{-\Omega(n)}$. In order to prove \ref{ob2} and \ref{ob3} hold with high probability, we will start by considering the degrees of vertices in $H_P$. 
We know that $\Delta(G) \leq 2s$, and every colour in $G$ appears on at most $s$ edges. In particular this means that $d_R(x)\leq 2s$ for every $x \in A \cup B$, and $d_R(c) \leq s$ for every $c \in C$. By choice of $R$ we also have the additional properties that $d_R(x)= 2s$ for every $x \in\left(A \cap \left[2s, \frac{n}{2}-s\right]\right) \cup \left(B \cap \left[\frac{n}{2}+2s,n-s\right]\right) $, and $d_R(c)= s$ for every $c \in C\cap [2s,n-2s]$. 

\begin{itemize}
 \item Let $c \in C$. Consider the random variable $X_c$ that is the number of edges $abc$ in $R$ where $a\in A_1$ and $b\in B_2$. Suppose $abc \in E(R)$ for some $a\in A$ and $b\in B$. The events $a \in A_1$ and $b\in B_2$ happen independently and with probability $p$ each, so the probability that they both occur is $p^2$. Thus the expected number of edges $abc \in E(R)$ containing $c$ for which $a\in A_1$ and $b\in B_2$ is $p^2d_R(c) \leq p^2s$. In particular, we have $\mathbb{E}[X_c]\leq p^2s$. 
Furthermore, note that the pairs $a\in A_1$, $b\in B_2$ for which $c=|b-a| = b-a$ are pairwise disjoint, so that the random variables $\mathbf{1}\{abc\in E(R)\}$ over all such pairs are independent.
Applying a Chernoff bound (\cref{thm:chern}), for each $c\in C$ we have
$$ \mathbb{P}(X_c > (1+\gamma)p^2s) \leq \mathbb{P}(|X_c - \mathbb{E}[X_c]| \geq \gamma p^2s) \leq 2e^{-\frac{\gamma^2 p^2s}{3}} 
= 2e^{-\Omega(n)}.
$$
Taking a union bound over all elements of $C$, the probability that there exists $c\in C$ contained in more than $(1+\gamma)p^2s$ edges of $R$ of the form $abc$ for $a\in A_1$ and $b\in B_2$ is at most $2ne^{-\Omega(n)}$.
In particular, by definition of $H_P$, the probability that there exists a vertex $c\in C\cap V(H_P) = P_2$ with degree greater than $(1+\gamma)p^2s$ in $H_P$ is at most $2ne^{-\Omega(n)}$. 

Furthermore, for $c\in [2s,n-2s]$, we have $\mathbb{E}[X_c] = p^2d_R(c) = p^2s$ and by a Chernoff bound (\cref{thm:chern}) we have
\begin{equation*}
    \mathbb{P}(|X_c - p^2s| \geq \gamma p^2s) 
    \leq 2e^{-\frac{\gamma^2 p^2s}{3}} 
    \leq 2e^{-\Omega(n)}.
\end{equation*}
Taking a union bound over the elements of $[2s,n-2s]$, and as a consequence of the definition of $H_P$, the probability that there exists a vertex $c\in [2s,n-2s]\cap V(H_P)$ with degree outside of the range $(1 \pm \gamma)p^2s$ in $H_P$ is at most $2ne^{-\Omega(n)}$.

\item Let $b\in B$. 
	Consider the random variable $Y_b$ that is the number of edges $abc$ in $R$ where $a\in A_1$ and $c\in P_2$. Suppose $abc \in E(R)$ for some $a\in A$ and $c\in C$. The events that $a \in A_1$ and that $c=b-a\in P_2$ happen independently with probabilities $p$ and $p/2$ respectively, unless $b-a=a$, in which case there is a dependence. This latter case can only happen at most once, if $b=2a$. Thus  $\mathbb{E}[Y_b] = \frac{p^2}{2}d_R(b) \pm 1 \leq p^2s +1$. 

	For each $v \in [n]$ let $I_v$ be the random variable that is $0$ if $v \notin S_1 \cup S_2$, $1$ if $v \in S_1$, and $2$ if $v \in S_2$, and write $\Omega = \{0,1,2\}$.
	Then $Y_b$ is a $2$-Lipschitz function from $\Omega^n$ to $\mathbb{R}$. Indeed, for any $v \in [n]$, $v$ is contained in at most two edges alongside $b$ in the hypergraph $R$ (at most one for each of the possibilities that $v \in A$ and $v \in C$), so changing the outcome of whether some vertex $v$ belongs in $S_1$ or $S_2$ or neither changes the value of $Y_b$ by at most $2$.
We apply McDiarmid's inequality  (\Cref{thm:McDiarmid}) to obtain
    \begin{align}
    \begin{split}\label{eqn:mcdiarmidB_j}
    \mathbb{P}(Y_b >(1+ \gamma) p^2s) 
 & \leq \mathbb{P}(|Y_b - \mathbb{E} [Y_b] | \geq \gamma p^2s - 1) 
  \leq 2e^{-\frac{2(\gamma p^2s/2)^2}{4n}} 
 \leq 2e^{-\Omega(n)}.
    \end{split}
    \end{align}
Again by taking a union bound over all elements of $B$, we obtain that with probability at most $n e^{-\Omega(n)}$, some vertex in $B$ is contained in more than $(1+\gamma)p^2s$ edges $abc$ in $R$ with $a \in A_1$ and $c \in P_2$.
In particular, the probability that some vertex in $B_2$ has degree greater than $(1 + \gamma)p^2s$ in $H_P$ is at most $n e^{-\Omega(n)}$.

Secondly, suppose $b\in \left[\frac{n}{2}+2s,n-s\right]$. We have $\mathbb{E}[Y_b]= \frac{p^2}{2}d_R(b) \pm 1 = p^2s\pm 1$. Again since $Y_b$ is a $2$-Lipschitz function, by the same inequality from the right hand side of \cref{eqn:mcdiarmidB_j}, the probability that $Y_b \neq (1\pm \gamma) p^2s$ is at most $ 2e^{-\Omega(n)}$. 
Taking a further union bound over all elements of $[\frac{n}{2}+2s,n-s]$ and by the definition of $H_P$, this implies that with probability $2ne^{-\Omega(n)}$, there exists $b\in [\frac{n}{2}+2s,n-s]\cap V(H_P)$ with degree $ \neq (1\pm \gamma)p^2s$ in $H_P$. 

\item Let $a\in A$. Consider the random variable $Z_a$ that is the number of edges $abc$ in $R$ where $b\in B_2$ and $c\in P_2$. An identical argument to the case above gives that $\mathbb{E}[Z_a] \leq p^2s + 1$, and that $Z_a$ is a $2$-Lipschitz function from $\Omega^n$ to $\mathbb{R}$, on the same product space. Applying \cref{thm:McDiarmid}, a union bound and specialising to vertices in $A_1$ as before, with probability at most $n e^{-\Omega(n)}$, there exists some $a\in A\cap V(H_P)$ that is contained in more than $ (1+\gamma)p^2s$ in $H_P$.
Furthermore, the probability of some vertex in $[2s, \frac{n}{2}-s]\cap V(H_P)$ having degree $ \neq (1\pm \gamma)p^2s$ in $H_P$ is at most $2ne^{-\Omega(n)}$.

\end{itemize}
One further application of the union bound tells us that with probability $1-o(1)$ all vertices in $H_P$ have degree at most $(1+\gamma)p^2s$, and, all vertices in the following set have degree $(1\pm \gamma )p^2s $ in $H_P$.
\begin{equation}\label{eqn:deg_s_vxs}
    \left(A_1 \cap \left[2s, \frac{n}{2}-s\right]\right) \cup \left(B_2 \cap \left[\frac{n}{2}+2s,n-s\right]\right) \cup \left(P_2 \cap \left[2s,n-2s\right]\right).
\end{equation}
Note that, by \ref{ob1}, with high probability we have
\begin{equation*}
    (1\pm \gamma )p^2s = (1\pm \gamma )\frac{p^2\mu m }{20} = \left(\frac{1\pm \gamma}{1\pm\gamma}\right)\frac{p^2\mu}{20}|H_P|
= (1\pm \sqrt{\gamma})\frac{p^2\mu}{20}|H_P|, 
\end{equation*}
where the final inequality holds since $\frac{1+y}{1-y} \leq 1+\sqrt{y}$ and $\frac{1-y}{1+y} \geq 1-\sqrt{y}$ for all $ 0 <y \ll 1$. 

In particular, both \ref{ob2} and \ref{ob3} hold with probability $1-o(1)$, and we now seek to apply~\cref{thm:Pippenger}. 

Note that with high probability, using~\ref{ob1}, the number of vertices of $H_P$ that are outside of the set given in~\eqref{eqn:deg_s_vxs} is at most $10s = \frac{\mu}{2}m\leq \mu|H_P|$.
Taking $\sqrt{\gamma}, \mu, \ \frac{p^2\mu}{20}$ and $|H_P|$ to play the roles of $\gamma, \mu \ \alpha $ and $n$ respectively, with high probability we can apply \Cref{thm:Pippenger} to find a matching in $H_P$ covering all but $2\mu|H_P|$ vertices. Denote it by $M_P$. 
As mentioned earlier, we can apply the exact same method to $H_Q$, and apply a union bound, to get that with probability $1-o(1)$ both $M_P$ and $M_Q$ exist as in the statement of the claim. 
\end{proofclaim}

Note that $H_P$ and $H_Q$ are vertex-disjoint and $V(H_P) \cup V(H_Q) = V(H_{[n]}[S_1,S_2,S_2])$. Then with probability $1-o(1)$, there is a matching $M \coloneqq M_P \cup M_Q$ in $H_{[n]}[S_1,S_2,S_2]$ covering all but at most $2\mu(|H_P|+|H_Q|) = 2\mu|H_{[n]}[S_1,S_2,S_2]|$ vertices in $H_{[n]}[S_1,S_2,S_2]$. Since $H_{[n]}[S_1,S_2,S_2]$ is $3$-uniform and $3$-partite, then such a matching $M$ covers the same number of vertices in each of its tripartition classes, and therefore covers all but at most $2\mu\max \{|S_1|,|S_2|\}$ vertices in each class. Each vertex class has size at least $\min\{|S_1|,|S_2|\}$. Finally we take $\theta^{-1}(M)$ to be the corresponding rainbow matching in the edge-coloured graph $K_{[n]}$ with all edges between $S_1$ and $S_2$ having colour in $S_2$. Thus we know that with probability $1-o(1)$, this matching exists, and covers all but at most $2\mu\max \{|S_1|,|S_2|\}$ vertices in both $S_1$ and in $S_2$. By construction $\theta^{-1}(M) \subseteq G$ and by \Cref{lem:spanninggraph} no edge in $G$ has colour $1$. So it also follows that no edge in $\theta^{-1}(M)$ has colour $1$, as desired.
\end{proof}

\subsection{Embedding trees with a splitting vertex}

We focus on trees with a specific structure that will be particularly useful for embedding the auxiliary tree defined in the proof of \cref{lem:TminusJrooted_comps}, and as mentioned in the strategy overview in \cref{strategy}.
\begin{lemma} \label{lem:onesmallcompsv2}
    Let $n^{-1}\ll \zeta \ll \eps<1$ and suppose that $T$ is an $(n+1)$-vertex tree containing a vertex $v$ such that all components of $T- v$ have order at most $\zeta^{-1}$. Then there exists a rainbow copy of $T$ in $K_{[(1+\varepsilon)n] \cup \{0\}}$ such that $v$ receives label $0$, and no edge has colour $1$. 
\end{lemma}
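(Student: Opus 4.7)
The plan is to decompose $T-v$ by rooted isomorphism type of its components and embed the copies in parallel via random label pools plus rainbow matchings. Since each component has order at most $\zeta^{-1}$, the number of rooted-tree types is some constant $k=k(\zeta)$; label them $F_1,\ldots,F_k$ with $d_j$ copies of $F_j$, so $\sum_j d_j|V(F_j)|=n$, and fix a BFS order $u_{j,0}=r_j,u_{j,1},\ldots,u_{j,|F_j|-1}$ on each $F_j$, writing $p(s')$ for the parent of $u_{j,s'}$ in $F_j$. I would set $v\mapsto 0$ and choose pairwise disjoint $p_{j,s}$-random subsets $L_{j,s}\subseteq[(1+\varepsilon)n]$, one per slot $(j,s)$, each of expected size roughly $(1+\varepsilon/3)d_j$, together using about $(1+\varepsilon/3)n$ labels. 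The intended role of $L_{j,s}$ is twofold: it hosts the labels of all copies of the vertex $u_{j,s}$, and (for $s\ne 0$) it serves as the colour pool for the unique parent-edge in $F_j$ ending at $u_{j,s}$. The pool $L_{j,0}$ plays the analogous colour role for the edges from $v$ to the root-copies, since those colours equal the root labels themselves.

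Next, for each non-root slot $(j,s')$ I would apply \cref{lem:mathcingSiSjv3} with $S_1=L_{j,p(s')}$ and $S_2=L_{j,s'}$ to obtain, with probability $1-o(1)$, a rainbow matching $M_{j,s'}\subseteq K_{[(1+\varepsilon)n]}[L_{j,p(s')},L_{j,s'},L_{j,s'}]$ covering all but a $2\mu$-fraction of each side, with colours in $L_{j,s'}$ and no edge of colour $1$. A union bound over the $O(1)$ applications produces all matchings simultaneously. To assemble the copies I would trace through the matchings: starting from a label $\ell_0\in L_{j,0}$, the image of $u_{j,s'}$ is defined as the partner of the image of $u_{j,p(s')}$ under $M_{j,s'}$. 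Because each $M_{j,s'}$ is a graph matching and the pools $L_{j,s}$ are pairwise disjoint, different starting roots trace out vertex-disjoint copies and different slots occupy disjoint label blocks.

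A straightforward top-down counting argument bounds the set of ``bad'' roots in $L_{j,0}$ (those for which the tracing breaks somewhere in the subtree, because a required partner is missing) by $|V(F_j)|\cdot 2\mu\max_s|L_{j,s}|$; taking $\mu\ll\zeta\varepsilon$ leaves at least $d_j+1$ good roots in each $L_{j,0}$, and I can pick $d_j$ of them avoiding the forbidden label $1$. The resulting embedding is rainbow because within each matching $M_{j,s'}$ the colours are distinct by construction, across non-root slots the pools $L_{j,s'}$ are pairwise disjoint, and the $v$-to-root edges carry as colours the distinct root labels lying in the disjoint union $\bigcup_j L_{j,0}$, which is itself disjoint from the matching colour pools; the colour $1$ is excluded by the matching guarantee together with the choice of roots.

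The main obstacle I anticipate is that \cref{lem:mathcingSiSjv3} requires $p\gg n^{-1}$, so the scheme above applies directly only to types $F_j$ with $d_j$ linear in $n$. Thresholding at $d_j\ge cn$ for a small constant $c=c(\varepsilon,\zeta)$, the ``small'' types collectively contain at most $k\zeta^{-1}cn\le\varepsilon n/10$ vertices, and I would isolate them by reserving a separate sub-block of labels of size about $\varepsilon n/4$, disjoint from the $L_{j,s}$'s used for the big types. These small types can then be absorbed by essentially the same random-partition-plus-matching-lemma scheme run inside the reserved sub-block, or by a tailored greedy argument exploiting the linear-sized surplus of unused labels and unused colours; either way, the main structural ingredients transfer. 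Combining this with the large-$d_j$ construction above yields the rainbow embedding of $T$ into $K_{[(1+\varepsilon)n]\cup\{0\}}$ with $v$ labelled $0$ and no edge of colour $1$.
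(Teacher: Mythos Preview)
Your overall architecture matches the paper's: partition the label set randomly into pools, one per ``slot'' in the component forest, apply \cref{lem:mathcingSiSjv3} along each parent--child edge type, trace the matchings down from the roots, and observe that the $v$--root edges carry as colours the root labels themselves, which live in pools disjoint from the matching-colour pools. The paper organises this slightly differently, but the substance is the same.

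The real divergence is in how you cope with the fact that the multiplicities $d_j$ may be tiny, forcing $p_{j,s}\not\gg n^{-1}$. You threshold at $d_j\geq cn$ and defer the small types to a separate argument. The paper avoids this dichotomy by a neat rounding trick: it replaces each $m_i$ by the nearest multiple $m_i'$ of a common $d=\lfloor\lambda n\rfloor$ (with $n^{-1}\ll\lambda\ll\zeta$), so that $T-v$ sits inside a larger forest $\hat F\times d$ with $|\hat F|=O(\lambda^{-1})$ pieces; the partition then has uniform parameter $p=1/|\hat F|$, a fixed constant, and \cref{lem:mathcingSiSjv3} applies uniformly. This costs only $O(k\lambda n)$ extra vertices, absorbed by the $\varepsilon$-slack.

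Your small-type patch, as written, has a gap. Running ``the same scheme inside the reserved sub-block'' does not help: the probabilities are still $O(d_j/n)$, below the hypothesis of \cref{lem:mathcingSiSjv3}. The greedy alternative also fails as stated: if you embed the small-type components into a label block of width $w$, their internal edges take colours in $[1,w]$, but the big-type embedding already consumes almost all of those small colours (its colour pools $\bigcup L_{j,s}$ are spread uniformly over $[(1+\varepsilon)n]$). At a greedy step you must avoid roughly $n$ used colours, each ruling out two labels, overwhelming a block of size $\Theta(\varepsilon n)$. To make a greedy work you would also need to reserve a block of small colours (equivalently, keep the $L_{j,s}$ away from $[1,w]$) and tune $c$ accordingly; this is doable but not what you wrote. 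The paper's rounding trick is the cleaner fix.
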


\begin{proof}    Choose $\lambda \in (0,1) $ and $n \in \mathbb{N}$ to satisfy the hierarchy given by
\begin{equation}\label{5.6hierarchy}
    n^{-1} \ll \lambda \ll \zeta \ll \eps.
\end{equation}
Let $d \coloneqq \lfloor \lambda n \rfloor$. Let $k$ denote the number of all possible rooted trees on at most $\zeta^{-1}$ vertices, and denote these rooted trees as $T_1,T_2,\ldots,T_k$. Note that by Cayley's formula, there are at most $(\zeta^{-1})^{\zeta^{-1} - 1}$ trees on at most $\zeta^{-1}$ vertices. For each of these there are at most $\zeta^{-1}$ vertices to select as a root. So in total, we have $k \leq (\zeta^{-1})^{\zeta^{-1}}$. We can assume by \cref{5.6hierarchy} that $\lambda \leq \varepsilon \zeta^{\zeta^{-1}+1}/18$ and so in particular $\lambda \zeta^{-1} k \leq \varepsilon/18$. Let $T$ be an $(n+1)$-vertex tree and suppose there exists a vertex $v \in V(T)$ satisfying the assumptions of the statement. Then in particular every component of $T-v$ is isomorphic to $T_i$ for some $i \in [k]$ where the root of $T_i$ is the sole neighbour of $v$ in $T$ (there cannot be more than one neighbour of $v$ in this component as else this would create a cycle). For each $i \in [k]$, let $m_i \in \mathbb{N} \cup \{0\}$ count the number of components in $T-v$ which are isomorphic to $T_i$, with the root neighbouring $v$. We view the forest $T - v$ as a collection of rooted trees by $$T-v \cong (T_1 \times m_1) + (T_2 \times m_2) + \ldots + (T_k \times m_k).$$ 
For each $i \in [k]$, let $m_i'$ be the smallest integer at least as big as $m_i$ which is divisible by $d$. So $m_i \leq m_i' < m_i +d$.
Define $F'$ to be the forest given by $$F' = (T_1 \times m'_1) + (T_2 \times m'_2) + \ldots + (T_k \times m'_k). $$ 

It follows that $T-v$ is a subgraph of $ F'$. Let $n' \coloneqq |F'|$. Since $m_i' < m_i +d$ for every $i \in [k]$, then $n' < n+d\zeta^{-1}k \leq (1+\lambda \zeta^{-1}k)n\le (1+\varepsilon/18)n$. By our choice of each $m_i'$ being divisible by $d$, we can write $F' = \hat{F} \times d$ for the forest $\hat{F}$ given by $$\hat{F}  =  \left(T_1 \times\frac{m'_1}{d}\right)  + \left(T_2 \times\frac{m'_2}{d}\right) + \ldots +\left(T_k \times\frac{m'_k}{d}\right).$$

    So now we have that $T-v$ is a subgraph of $\hat{F} \times d $ and let us consider the tree $T'$ obtained by adding vertices and edges to $T$ so that $T' -v $ is isomorphic to $\hat{F} \times d$, where $v$ is adjacent to the root in each component of $\hat{F}$. 
    We have that $|\hat{F} \times d| = n'  \leq (1+\varepsilon/18)n$. In particular, note that
    \begin{equation}\label{eq:edges_Fhat}
        e(\hat F)< |\hat{F}| \leq (1+\varepsilon/18) \frac{n}{d}\leq (1+\varepsilon/18)\frac{2n}{\lambda n} \leq 4\lambda^{-1}.
    \end{equation}
We will require some extra labels in our complete graph in order to find a rainbow embedding of $T'$. Let $\delta \coloneqq \lambda^2$ and choose $\tilde{n}$ to be the smallest even integer that is at least $  \frac{(1+\delta)n'}{1-\delta e(\hat{F})}$. We will construct a rainbow copy of $T'$ in $K_{[\tilde{n}]\cup \{0\}}$ by mapping $v$ to $0$, and the remaining vertices in $T' \setminus \{v\}$ will be assigned a unique label in $[\tilde{n}]$ so that all edges of $T'$ have a distinct colour in $K_{[\tilde{n}] \cup \{0\}}$. We will also enforce the additional restriction that colour $1$ cannot be used. Since $T \subseteq T'$, this defines a rainbow embedding of $T$ in $K_{[\tilde{n}] \cup \{0\}}$ where $v$ receives label $0$, by considering the same embedding restricted to $V(T)$. 
Using \cref{eq:edges_Fhat} and recalling that $n'\leq (1+\varepsilon/18)n$, it follows from \cref{5.6hierarchy} that 
\begin{equation*}
    \Tilde{n} \leq \left\lceil \frac{(1+\delta)n' }{1-\delta e(\hat{F})} \right\rceil +1   
\leq \frac{(1+2\delta)(1+\varepsilon/18)n}{1-4\delta\lambda^{-1}} 
\leq (1+\varepsilon)n,
\end{equation*}

So, finding a rainbow copy of $T$ in $K_{[\tilde{n}] \cup \{0\}}$ satisfying the required properties will give us our desired embedding of $T$ in $K_{[(1+\varepsilon)n] \cup \{0\}}$.
 
Let $ p \coloneqq \frac{1}{|\hat{F}|} = \frac{d}{n'}$ and consider a $p$-random partition $[\tilde{n}] = S_1 \cup  S_2 \cup \ldots \cup S_{|\hat{F}|}$, meaning that each element of $ [\tilde{n}] $ is selected independently and uniformly at random with probability $p$ to belong to a single part. Then for each $i \in |\hat{F}|$, $|S_i| \sim \text{Bin}(\tilde{n},p)$ and $\mathbb{E}|S_i| = p\tilde{n} = \frac{(1+\delta)d}{1-\delta e(\hat{F})} \pm \frac{2d}{n'}$. So applying \cref{thm:chern} gives
\begin{align}\label{eqn:S_i_Chernoff}
    \mathbb{P}\biggl[\Bigl||S_i| - \mathbb{E}|S_i|\Bigr| > \frac{\delta d}{2(1-\delta e(\hat{F}))}\biggr]\leq \mathbb{P}\left[\Bigl||S_i| - \mathbb{E}|S_i|\Bigr| > \frac{\delta }{8}\mathbb{E}|S_i|\right]  
    & \leq 2e^{-\frac{\delta^2(1+\delta)d}{192(1-\delta e(\hat{F}))}} = 2e^{-\Omega(n)}.
\end{align}
Hence, with probability $1 - 2ne^{-\Omega(n)}$, we have $|S_i| = \frac{(1+\delta)d}{1-\delta e(\hat{F})} \pm  \left(\frac{\delta d}{2(1-\delta e(\hat{F}))} + \frac{2d}{n'}\right) = \frac{(1+\delta)d}{1-\delta e(\hat{F})} \pm  \frac{\delta d}{1-\delta e(\hat{F})}$ for all $i \in [|\hat{F}|]$.

Let $\mu \coloneqq \frac{\delta\zeta(1 - 2\delta)}{8(1+2\delta)}$ and note that $n^{-1}\ll \mu$. We have $p = d/n'\geq \lambda/4$, and so $n^{-1}\ll p$.
Consider any pair $i,j \in [|\hat{F}|]$ such that $i<j$. 
Apply \cref{lem:mathcingSiSjv3} with $S_i,S_j,\Tilde{n}, p$ and $\mu$ playing the roles of $S_1,S_2,n, p$ and $\mu$ respectively, to deduce that with probability $1-o(1)$, there is a rainbow matching with edges between $S_i$ and $S_j$ having colours in $S_j \setminus \{1\}$, covering all but at most $2\mu\max\{|S_i|,|S_j|\}$ vertices in both $S_i$ and $S_j$. Taking $n$ sufficiently large and applying a union bound, 
we deduce that there is a partition $[\tilde{n}] = S_1\cup \dots \cup S_{|\hat{F}|}$ satisfying the properties that
\begin{equation}\label{eqn:sizeS_i}
    \frac{d}{1-\delta e(\hat{F})}\leq |S_i| \leq \frac{(1+2\delta)d}{1-\delta e(\hat{F})}
\end{equation} 
for all $i \in [|\hat{F}|]$, and that there is a collection of matchings $\mathcal{M}_0 \coloneqq \{M_{ij} : i,j \in [|\hat{F}|], i <j\}$ such that each $M_{ij}$ is $S_j$-rainbow, covers all but at most $2\mu\max\{|S_i|,|S_j|\}$ vertices in both $S_i$ and $S_j$, with all edges between $S_i$ and $S_j$, and that no edge in any matching in this collection has colour $1$. Fix such a partition for the remainder of the proof. 

We will combine these matchings in order to embed the graph $\hat{F}\times d$. Enumerate the vertices of $\hat{F}$ as $u_1,\ldots,u_{|\hat{F}|}$ using the rooted ordering of each component, that is, all roots are given the lowest indices, and vertices are arbitrarily labelled in ascending order of distance from a root.
Note that any vertex in this ordering has at most one neighbour amongst the lower indexed vertices, using the property that rooted trees are $1$-degenerate.  
 Let $\mathcal{M} \coloneqq \{M_{ij} : u_iu_j \in e(\hat F)\}$. 

Order the copies of $\hat{F}$ in $\hat{F}\times d$ arbitrarily and denote the vertices of $\hat{F}\times d$ by the set $\{u_j^s: j\in [|\hat{F}|], s\in [d]\}$, where $u_j^s$ corresponds to the vertex $u_j$ belonging to the $s$th copy of $\hat{F}$.
We use the following claim. 

\begin{claim}\label{claim:rainbowforest}
 $K_{[\tilde{n}]}$ contains a rainbow copy of $\hat{F} \times d$ such that if $\psi : V(\hat{F} \times d) \rightarrow [\Tilde{n}]$ corresponds to this embedding, then the following properties are satisfied:
\begin{enumerate}[label = \upshape{(\roman*)}, leftmargin = \widthof{ABC00}]
	\item \label{itm:claim-1}
for every $j \in [|\hat{F}|]$ and $s\in [d]$, we have $\psi(u_j^s) \in S_j$,
\item \label{itm:claim-2}
	for every $\ell \in [|\hat{F}|]$ such that $u_{\ell}$ is a root in a component of $\hat{F}$,  
no edge in the embedding has a colour in $S_{\ell}$,
\item \label{itm:claim-3}
	no edge in the copy has colour $1$ and no vertex gets embedded to $1$, i.e.\ $1\notin \textnormal{im}(\psi)$.
\end{enumerate}
\end{claim}

\begin{proofclaim}
Recall that a rainbow copy of $\hat{F} \times d$ is exactly the union of $e(\hat{F})$ colour-disjoint rainbow matchings, where each matching has size $d$ and corresponds to a unique edge in $\hat{F}$. So, we use the partition $S_1 \cup \ldots \cup S_{\hat{F}}$ to find a collection of rainbow matchings, each of which contains only edges of colour in $S_j$ for some $j \in [|\hat{F}|]$, and such that no two matchings use the same colour set.

Note that by choice of our ordering, for a fixed index $j$, there is at most one $i<j$ such that  $u_iu_j \in E(\hat{F})$. In particular at most one matching in $\mathcal{M}$ is $S_j$-rainbow. Since this holds for all $j$, then the matchings in $\mathcal{M}$ are pairwise colour-disjoint in $K_{[\tilde{n}]}$, and thus clearly also pairwise edge-disjoint. 

For each $t\in [|\hat{F}|]$, let us define the subcollection of matchings $\mathcal{M}_t \coloneqq \{M_{ij} \in \mathcal{M}: i = t \text{ or } j=t \}.$
Consider the set of `bad' vertices in $S_t$ to be those in the set $$B_t \coloneqq \{1\} \cup \left( S_t \setminus \bigcap_{M\in \mathcal{M}_t}V(M) \right).$$ 
By \cref{eqn:sizeS_i}, it follows that $|S_i| \geq |S_j| -\frac{2\delta d}{1-\delta e(\hat{F})}$ and vice versa for any pair $i,j$. Without loss of generality assume that $|S_j| \geq |S_i|$ (else we can easily apply the same argument the other way around), and we consider the lower bound in \cref{eqn:sizeS_i} to see that
\begin{equation*}
2 \mu|S_j| = \frac{\delta\zeta(1 - 2\delta)}{4(1+2\delta)}|S_j| = \frac{\delta\zeta}{4(1+2\delta)}\left(|S_j| - 2\delta|S_j| \right) \leq \frac{\delta\zeta}{4(1+2\delta)} \left(|S_j| - \frac{2\delta d}{1-\delta e(\hat{F})} \right) 
\leq \frac{\delta\zeta}{4(1+2\delta)}|S_i|.
\end{equation*}
So we can conclude that $ 2\mu\max\{|S_i|,|S_j|\} \leq \frac{\delta\zeta}{4(1+2\delta)}\min \{|S_i|,|S_j|\}$, and therefore every matching $M_{ij} \in \mathcal{M}$ covers all but at most $\frac{\delta\zeta}{4(1+2\delta)} \min\{|S_i|,|S_j|\}$ vertices in $S_i$ and all but at most $\frac{\delta\zeta}{4(1+2\delta)} \min\{|S_i|,|S_j|\}$ vertices in $S_j$.

In particular, 
for every $i\in [|\hat{F}|]$ we have $|B_i \setminus \{1\}|\leq \sum_{M\in \mathcal{M}_i}|S_i\setminus V(M)|\leq |\{e\in E(\hat{F}): u_i\in e\}|\cdot \frac{\delta\zeta}{4(1+2\delta)} |S_i|$.
Again using \cref{eqn:sizeS_i}, and since $1 \in B_t$ for exactly one $t\in [|\hat{F}|]$,  we deduce that \begin{equation}\label{eq:sum_bad_vertices}
    \sum_{i\in [|\hat{F}|]}|B_i|\leq 1 +\sum_{i\in [|\hat{F}|]}\sum_{\substack{e\in E(\hat{F}):\\ u_i \in e}} \frac{\delta\zeta}{2(1+2\delta)}|S_i| 
    \leq \frac{\delta\zeta}{1+2\delta} e(\hat{F})\max_{j}|S_j|\leq \delta \zeta e(\hat{F})\min_{j}|S_j|.
\end{equation}
Let $H \subseteq K_{[\Tilde{n}]}$ be the graph obtained by taking the union of the matchings $\bigcup_{M \in \mathcal{M}} M$ and deleting all connected components which contain a bad vertex. The components of $H$ combine to form copies of $\hat{F}$, of which each copy will contain exactly one vertex in each part.
Since every component of $\hat{F}$ has at most $\zeta^{-1}$ vertices and using \cref{eq:sum_bad_vertices}, then in total we delete at most $\zeta^{-1}\sum_{i}|B_i| \leq \delta e(\hat{F})|S_t|$ vertices from $\bigcup_{M \in \mathcal{M}} M$ within a given part $S_t$, in order to obtain $H$. So, using the lower bound from \cref{eqn:sizeS_i}, we can find at least $(1-\delta e(\hat{F}))|S_t| \geq  d$ copies of $\hat{F}$ in total. Choosing exactly $d$ of them we find an embedding of $\hat{F} \times d$ in $K_{[\Tilde{n}]}$ which we have shown is rainbow.

No edges in the matchings in $\mathcal{M}$ have colour $1$, and by choice of $B_t$ we have $1\notin V(H)$, and so property \ref{itm:claim-3} holds. The way in which we have constructed this forest means that each matching $M_{ij}$ corresponds to an edge $u_iu_j \in E(\hat{F})$, and is used for the image of edges of the form $u_i^su_j^s$. So, all of the endpoints belong in the corresponding vertex classes of the partition, i.e.\ for every $j \in [|\hat{F}|]$, each copy of $u_j$ is embedded into the part $S_j$, as desired for property \ref{itm:claim-1}. For any root $u_{\ell}$ in some component of $\hat{F}$, $u_{\ell}$ has no neighbour in $\hat{F}$ with a lower index in the ordering $u_1,\ldots,u_{|\hat{F}|}$, and so no matching of the form $M_{k\ell}$ with $k<\ell$ has been used to construct the copy of $\hat{F} \times d$. This means there is no $S_{\ell}$-coloured matching, and so property \ref{itm:claim-2} holds. Thus all conditions of \cref{claim:rainbowforest} are satisfied.
\end{proofclaim}

So, now let us use the construction from the claim to find our desired copy of $T$ in $K_{[\Tilde{n}] \cup \{0\}}$. First we find a copy of $T'$. Let us extend the mapping $\psi$ of $V(\hat{F} \times d)$ given by \cref{claim:rainbowforest}, by adding the assignment $\psi(v) \coloneqq 0$. Then $\psi:V(T') \rightarrow [\Tilde{n}] \cup \{0\}$ is an injection providing us with an embedding of $T'$ into $K_{[\Tilde{n}] \cup \{0\}}$. We aim to prove this is rainbow. Since we already know $\psi$ restricted to $V(\hat{F} \times d)$ is rainbow, it remains to check that by mapping $v$ to the label $0$, we create no colour repetitions.

By choice of $\hat{F}$, we know that only the roots within each component of $\hat{F}$ are neighbours of $v$, and thus by embedding $v$ to $0$ in $K_{[\tilde{n}]\cup \{0\}}$, we only add edges between $v$ and copies of roots in $\hat{F}$. These edges will have colour $|\psi(u_{\ell}^s) - \psi(v)| = |\psi(u_{\ell}^s) - 0| =\psi(u_{\ell}^s)$ for some copy $u_\ell^s$ of a root $u_{\ell} \in V(\hat{F})$, and since we know $\psi(u_\ell^s) \in S_{\ell}$ from property \ref{itm:claim-1} and $\psi(u_\ell^s) \neq 1$ from property \ref{itm:claim-3}, then all colours added in this embedding belong in $S_{\ell}$ for some $\ell$ such that $u_{\ell}$ is a root, and none of the edges have colour $1$. Also by property \ref{itm:claim-2} of the embedding, these edges will be colour-disjoint from those already embedded within $\psi(\hat{F}\times d)$. Our resulting copy of $T'$ is therefore rainbow.
Finally, we restrict this embedding by only considering the vertices of $T$, which is possible since $T$ is a subtree of $T'$. This gives us our desired rainbow copy of $T$ in $K_{[\tilde{n}] \cup \{0\}} \subseteq K_{[(1+\varepsilon/2)n ]\cup \{0\}}$, such that $v$ is assigned to label $0$, and we have avoided using colour $1$ on any edge.
\end{proof}

\subsection{Extending the rainbow embedding}

We hope to use the statement of \cref{lem:onesmallcompsv2} in order to consider more families of trees. We do this by proving that for certain trees, it is possible to contract vertex sets so that the resulting tree contains a vertex of high degree which separates it into components of small order. Applying \cref{lem:onesmallcompsv2}, the labelling provided will then be extended by converting each edge of the contracted tree into a matching in the original. First, we need the following proposition which will be useful in finding rainbow perfect matchings between intervals.

  \begin{proposition}\label{prop:matchingintervals}
Let $i, j, \ell, n \in \mathbb{N}$ be such that $\ell$ is odd, $i<j$ and $(j+1)\ell \leq n$. Consider two disjoint intervals of integers $I_i = \{i\ell +1,\ldots,(i+1)\ell\}$ and $ I_j = \{j\ell +1,\ldots,(j+1)\ell\}$, and an interval of colours,  
$C_{ij}=\left\{(j-i)\ell-\lfloor \ell/2 \rfloor, \ldots, (j-i)\ell+\lfloor \ell/2 \rfloor\right\}$. Then there exists a rainbow perfect matching in $K_{[n]}$ from $I_i$ to $I_j$ using colours in $C_{ij}$.
 \end{proposition}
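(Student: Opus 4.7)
The plan is to set up an explicit bijection and then check that it does what we want. Re-parametrise the two intervals by writing $x \in I_i$ as $x = i\ell + a$ with $a \in \{1,\ldots,\ell\}$ and $y \in I_j$ as $y = j\ell + b$ with $b \in \{1,\ldots,\ell\}$. Then the colour of the edge $xy$ in $K_{[n]}$ is $(j-i)\ell + (b-a)$. So finding a rainbow perfect matching from $I_i$ to $I_j$ with colours in $C_{ij}$ is equivalent to finding a bijection $\sigma : \{1,\ldots,\ell\} \to \{1,\ldots,\ell\}$ such that the multiset $\{\sigma(a) - a : a \in \{1,\ldots,\ell\}\}$ equals $\{-\lfloor \ell/2\rfloor,\ldots,\lfloor \ell/2\rfloor\}$ (which has exactly $\ell$ elements since $\ell$ is odd).

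I would then construct $\sigma$ by the formula $\sigma(a) \equiv 2a \pmod{\ell}$, taking representatives in $\{1,\ldots,\ell\}$. Because $\ell$ is odd, $2$ is invertible modulo $\ell$, so $\sigma$ is a bijection on $\{1,\ldots,\ell\}$; in particular the associated edge set is a perfect matching between $I_i$ and $I_j$. To verify the difference condition, split on the size of $a$. If $1 \leq a \leq (\ell-1)/2$, then $2a \leq \ell - 1$ lies in $\{1,\ldots,\ell\}$ already, so $\sigma(a) = 2a$ and $\sigma(a) - a = a$, giving all of $\{1,2,\ldots,(\ell-1)/2\}$. If $(\ell+1)/2 \leq a \leq \ell$, then $2a \in \{\ell+1,\ldots,2\ell\}$, so $\sigma(a) = 2a - \ell \in \{1,\ldots,\ell\}$ and $\sigma(a) - a = a - \ell$, sweeping through $\{-(\ell-1)/2,\ldots,-1,0\}$ as $a$ ranges in this block. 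Combining the two, $\{\sigma(a) - a : a \in \{1,\ldots,\ell\}\} = \{-(\ell-1)/2,\ldots,(\ell-1)/2\} = \{-\lfloor\ell/2\rfloor,\ldots,\lfloor\ell/2\rfloor\}$, which after adding $(j-i)\ell$ is exactly $C_{ij}$.

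Finally I would just note that the matching sits inside $K_{[n]}$: the hypothesis $(j+1)\ell \leq n$ guarantees $I_i, I_j \subseteq [n]$, and the colours in $C_{ij}$ satisfy $1 \leq (j-i)\ell - \lfloor \ell/2\rfloor$ and $(j-i)\ell + \lfloor \ell/2\rfloor \leq n - \lceil \ell/2 \rceil \leq n-1$, so they are legitimate colours of $K_{[n]}$. There is no real obstacle in this proposition; the only mild subtlety is checking that the two cases in the definition of $\sigma$ really do combine to give the required symmetric interval of differences, and this is exactly where oddness of $\ell$ is essential (if $\ell$ were even, doubling would not be a bijection mod $\ell$, and the displayed difference set would not even have the right parity structure).
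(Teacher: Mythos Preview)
Your argument is correct. The reparametrisation reduces the problem cleanly to producing a permutation $\sigma$ of $\{1,\dots,\ell\}$ whose displacement set $\{\sigma(a)-a\}$ is the symmetric interval $\{-\lfloor\ell/2\rfloor,\dots,\lfloor\ell/2\rfloor\}$, and your choice $\sigma(a)\equiv 2a\pmod\ell$ does exactly this; the case split and the boundary case $a=\ell$ (giving $\sigma(\ell)=\ell$ and difference $0$) all check out, as does the verification that the colours lie in $[n-1]$.

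The route, however, is genuinely different from the paper's. The paper does not use a single algebraic formula but instead splits each interval into a left block of size $\lceil\ell/2\rceil$ and a right block of size $\lfloor\ell/2\rfloor$, and matches each block by order-reversal: $i\ell+y\leftrightarrow j\ell+\lceil\ell/2\rceil+1-y$ on the left blocks and $i\ell+\lceil\ell/2\rceil+z\leftrightarrow (j+1)\ell+1-z$ on the right blocks. This produces two rainbow matchings whose colour sets are the odd and even elements of $C_{ij}$ respectively, so their union is rainbow. Your doubling map is more uniform and makes the bijection property immediate (invertibility of $2$ mod $\ell$), while the paper's piecewise reversal is more pictorial (cf.\ their figure) and makes the parity separation of colours explicit. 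Both exploit oddness of $\ell$ in an essential but different place: you need $\gcd(2,\ell)=1$; they need $\lceil\ell/2\rceil+\lfloor\ell/2\rfloor=\ell$ with the two halves contributing colours of opposite parity.
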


\begin{proof}
Note that $\ell = \lceil \ell/2 \rceil + \lfloor \ell/2 \rfloor$. We match up the first $\lceil \ell/2 \rceil$ elements of $I_i$ with the first $\lceil \ell/2 \rceil$ elements of $I_j$, by smallest to largest, second smallest to second largest, and so on, to get a matching $M_1$ explicitly defined as
    $$M_1 = \left\{ \left (i\ell + y,j\ell + \left\lceil\ell/2\right\rceil + 1 -y\right) : y \in \left\{1,2,\ldots, \left\lceil\ell/2\right\rceil \right\}  \right\}. $$
    Similarly we match up the last $\lfloor \ell/2 \rfloor$ elements of $I_i$ with the last $\lfloor \ell/2 \rfloor$ elements of $I_j$ in the same way to get a matching $M_2$ given by
$$M_2 = \left\{ \left (i\ell+\left\lceil\ell/2\right\rceil + z,(j+1)\ell +1 -z\right) : z \in \left\{1,2,\ldots, \left\lfloor\ell/2\right\rfloor \right\}  \right\}.$$

\begin{figure}[b]
    \centering
    \begin{tikzpicture}[scale=1]
    \draw[line width =  0.5mm,blue] (1,1) -- (4,-0.7);
    \draw[line width =  0.5mm,teal] (2,1) -- (3,-0.7);
    \draw[line width =  0.5mm,green] (3,1) -- (2,-0.7);
    \draw[line width =  0.5mm,cyan] (4,1) -- (1,-0.7);
     \draw[line width =  0.5mm,red] (5,1) -- (7,-0.7);
    \draw[line width =  0.5mm,orange] (6,1) -- (6,-0.7);
    \draw[line width =  0.5mm,pink] (7,1) -- (5,-0.7);
    \foreach \i in {1,2,3,4,5,6,7}{
    \filldraw (\i,1) circle (2.5pt) node[anchor=south, yshift=0.2em]{\i};}
    \foreach \j in {15,16,17,18,19,20,21}{
    \filldraw (\j -14,-0.7) circle (2.5pt) node[anchor=north, yshift=-0.2em]{\j};}
    \draw [decorate,decoration={brace,amplitude=10pt,mirror,raise=1ex}]
  (0.85,-1) -- (4.15,-1) node[midway,xshift= -0.1em,yshift=-2.2em]{\footnotesize $M_1$ contains odd colours};
  \draw [decorate,decoration={brace,amplitude=10pt,mirror,raise=1ex}]
   (4.85,-1.05) -- (7.15,-1) node[midway,xshift= 0.1em, yshift=-2.2em]{\footnotesize $M_2$ contains even colours};
         \end{tikzpicture}
\caption{Rainbow matching between intervals $I_0$ and $I_2$ for $\ell = 7$, with colour set $\{11,12,\ldots,17\}$.}
    \label{fig:interval_matching}
\end{figure}
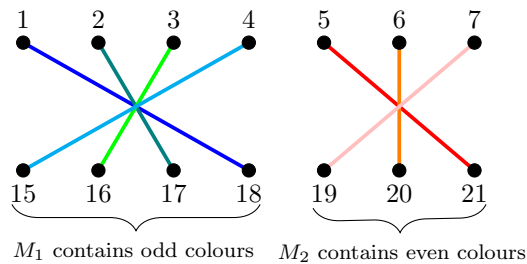

See \cref{fig:interval_matching} for an example. Note that $V(M_1) \cup V(M_2) = I_i \cup I_j$ and $V(M_1) \cap V(M_2) =\emptyset $, thus $ M \coloneqq M_1 \cup M_2$ is a perfect matching. It is easy to see that both $M_1$ and $M_2$ are individually rainbow, since the colour of the edges strictly decrease as we match up the pairs from smallest to largest and so on. It remains to verify that their union maintains the rainbow property, and all edges have 
a colour in $C_{ij}$.

Consider any edge in $M_1$. It will be in the form $ \left (i\ell + y,j\ell+ \lceil\ell/2\rceil +1 -y\right)$ for some $y \in \left\{1,2,\ldots,  \lceil\ell/2\rceil \right\}$. Calculating the absolute difference, the colour of this edge is
$$j\ell + \left\lceil\ell/2\right\rceil +1 -y - i\ell-y = (j-i)\ell + \left\lceil\ell/2\right\rceil -2y +1.$$
Therefore $C(M_1) = \left\{(j-i)\ell + \left\lceil \ell/2\right\rceil -2y +1 : y\in \left\{1,2,\ldots,  \left\lceil\ell/2\right\rceil \right\}\right\}$. By the same reasoning, we similarly deduce that $C(M_2)=\left\{(j-i)\ell +\left\lceil\ell/2\right\rceil - 2z : z\in \left\{1,2,\ldots,  \left\lfloor \ell/2 \right\rfloor \right\}\right\}$, using the identity $\ceil{\ell/2} = \ell - \ceil{\ell/2} +1$ since $\ell$ is odd. Colours in each of these sets have different parities, and thus they are disjoint. So, $M$ is a rainbow perfect matching such that each of its edges has a unique colour in the set
$$ C(M_1)\cup C(M_2) =
\left\{ (j-i)\ell + \left\lceil\ell/2\right\rceil - w : w \in \{1,2,\ldots, \ell\}  \right\},
$$
and this is equal to $C_{ij}$ as desired.
 \end{proof}

\begin{lemma}\label{lem:TminusJrooted_comps}
    Let $m^{-1} \ll \lambda\ll \zeta \ll \eps \ll 1$ be such that $\zeta m \in \N$, and let $T$ be an $m$-vertex forest. Suppose there exists a set of vertices $S \subseteq V(T)$ satisfying $|S|\leq \lambda m$ and $T \setminus S \cong F \times \zeta m$ for some forest $F$ with rooted components, such that within each component the root has at most one neighbour in $S$, and all other vertices in the component have no neighbour in $S$. Then $K_{[(1+\varepsilon)m]}$ contains a rainbow copy of $T$.
\end{lemma}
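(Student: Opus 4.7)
The plan is to apply \cref{lem:onesmallcompsv2} to a small auxiliary tree $T_{\mathrm{aux}}$ and then blow it up via \cref{prop:matchingintervals} into a rainbow embedding of $T$ in $K_{[(1+\eps)m]}$. First, take a sufficiently large integer $d$ and let $T_{\mathrm{aux}}$ be the tree consisting of a single vertex $v$ joined to the root of each of $d$ vertex-disjoint copies of $F$. Since $|F|\leq 1/\zeta$, every component of $T_{\mathrm{aux}}-v$ has order at most $1/\zeta$, and $T_{\mathrm{aux}}$ has $1+d|F|$ vertices; $d$ chosen large enough satisfies the hypotheses of \cref{lem:onesmallcompsv2}. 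Applying that lemma yields a rainbow embedding $\psi\colon V(T_{\mathrm{aux}}) \to [(1+\eps')d|F|]\cup\{0\}$ with $\eps'\ll\eps$, $\psi(v)=0$, and no edge of colour $1$; in particular $|\psi(x)-\psi(y)|\geq 2$ for every edge $xy$ of $T_{\mathrm{aux}}$.

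Next, choose an odd integer $\ell$ just above $\zeta m/d$ and, for each $x\in V(T_{\mathrm{aux}})$, associate the interval $I_x = \{\psi(x)\ell+1,\ldots,(\psi(x)+1)\ell\}$, so in particular $I_v=[1,\ell]$; these pairwise disjoint intervals fit inside $[(1+\eps)m]$ by the choices of $d$, $\eps'$ and $\ell$. For each edge $xy\in E(T_{\mathrm{aux}})$ apply \cref{prop:matchingintervals} to obtain a rainbow perfect matching $M_{xy}$ between $I_x$ and $I_y$ with colours in a band of width $\ell$ centred at $|\psi(x)-\psi(y)|\ell$. Since $\psi$ is rainbow and $\ell$ is odd these bands are pairwise disjoint, and since every edge of $\psi$ has colour at least $2$, every band lies above $[1,\ell-1]$.

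We then embed $T$ itself. Partition the $\zeta m$ copies of $F$ in $T\setminus S$ into $d$ groups $F_1,\ldots,F_d$ of $\zeta m/d$ copies each, chosen so that for every $s\in S$ and every root-type $u$ the neighbours of $s$ of type $u$ lie in distinct groups; copies of $u$ in group $j$ will be embedded into $I_{(u,j)}$ and $S$ will be embedded into $I_v$. For each group $j$ and each component $C\subseteq F$, pick distinct root-labels in $I_{(u_{\mathrm{root}},j)}$ for the $\zeta m/d$ copies of $C$; propagation through the bijective matchings $M_{(u,j)(u',j)}$ then determines the labels of the remaining vertices of each copy, making $T\setminus S$ rainbow. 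Each cross-edge $sr$ of $T$ forces the pair $(\phi(s),\phi(r))$ via $M_{v(u_{\mathrm{root}},j)}$, and by the partition property these forcings are pairwise consistent; the remaining vertices of $S$ receive labels greedily in $I_v$ so that $T[S]$'s edges get distinct colours in $[1,\ell-1]$ — feasible since $|S|\leq\lambda m\ll\ell$ and $|E(T[S])|\leq|S|-1$ — and these intra-$S$ colours automatically avoid all the matching bands.

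The hard part will be the partition step: an $s\in S$ may have many neighbours of a single root-type, and no constant $d$ on its own suffices to separate them. The remedy is to let $d$ grow with the worst concentration $\max_{s,u}|N_T(s)\cap\{\text{copies of }u\}|$; in the extreme case $d=\zeta m$ each group contains a single copy of $F$ and the partition is automatic, at the cost of $T_{\mathrm{aux}}$ having size $\approx m$ and the blow-up intervals shrinking to near-trivial size. A carefully calibrated $d$ balances these constraints while preserving both the applicability of \cref{lem:onesmallcompsv2} to $T_{\mathrm{aux}}$ and the fit of the blow-up inside $[(1+\eps)m]$.
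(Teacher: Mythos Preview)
Your high-level architecture matches the paper's: contract $S$ to a vertex $v$, group the $\zeta m$ copies of $F$ into $d$ batches to form $T_{\mathrm{aux}}$, apply \cref{lem:onesmallcompsv2}, then blow up each auxiliary vertex to an interval of length $\ell\approx \zeta m/d$ and each auxiliary edge to a perfect matching via \cref{prop:matchingintervals}. The gap is in the cross-edges from $S$ to the roots. Routing each such edge through the matching $M_{v,(u,j)}$ requires your partition property, which forces $d\geq \max_{s,u}|\{t:u^t\in N_T(s)\}|$; but that maximum can be $\zeta m$ (take $F$ with a single component and one $s\in S$ adjacent to all $\zeta m$ roots, with a second $s'\in S$ attached only to $s$). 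You are then squeezed: keeping the blow-up inside $[(1+\eps)m]$ pins $\ell$ at roughly $\zeta m/d\leq 1$, while embedding $S$ into $I_v$ needs $\ell\geq |S|=2$. No choice of $d$ resolves this, so the ``carefully calibrated $d$'' of your last paragraph does not exist in general. (There is also a slip in the order of operations: you label the roots \emph{before} treating the cross-edges, and then the claim that ``these forcings are pairwise consistent'' is false --- two roots adjacent to the same $s$ but lying in different auxiliary intervals force $\phi(s)$ through two unrelated matchings, with no reason for the values to agree.)

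The paper's missing ingredient is to abandon matchings for the $S$-to-root edges entirely. It takes $d\approx \gamma m$ with $\lambda\ll\gamma\ll\zeta$ and uses intervals of length $d+3|S|$ (the $3|S|$ padding costs only a $1+O(\lambda/\gamma)$ factor in range), embeds $S$ greedily into a length-$3|S|$ subinterval $I_0'\subset I_0$, and then, for each root-group, sorts the roots by their $S$-neighbours' labels in decreasing order and assigns them consecutive increasing labels in a length-$d$ subinterval $I_j'\subset I_j$. This monotone pairing makes the cross-edge colours strictly increasing within the group, hence distinct; across groups the colour bands $C_{0j}$ are disjoint because the auxiliary embedding was rainbow and avoided colour $1$. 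No partition constraint is needed, so the choice of $d$ is decoupled from the degrees in $S$, and \cref{prop:matchingintervals} is used only to propagate the non-root vertices.
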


\begin{proof}
    Choose $\gamma \in (0,1)$ such that $ 1/\gamma \in \mathbb{N}$ and $\zeta/\gamma \in \mathbb{N}$, and so that the following hierarchy is satisfied
\begin{equation}\label{hierarchyTnotJ}
m^{-1} \ll \lambda\ll \gamma \ll \zeta \ll \eps \ll 1.
\end{equation}
Let $T$ be as in the statement of the lemma. Without loss of generality we may assume $T$ is a tree and that for every component of $T\setminus S$, the root has exactly one neighbour in $S$, by adding edges between roots of components and vertices in $S$, and edges within $S$ if necessary. Let $d \in \{\lceil \gamma m\rceil, \lceil \gamma m\rceil +1\}$ be such that $d+3|S|$ is odd. We will first construct an auxiliary tree $T_{\text{aux}}$ obtained from $T$, such that $T_{\text{aux}}$ contains a vertex $v$ satisfying the assumptions of \Cref{lem:onesmallcompsv2}, allowing us to find a rainbow embedding of $T_{\text{aux}}$ in the difference coloured complete graph on a little more than $|T_{\text{aux}}|$ vertices.

Let us denote $V(F) \coloneqq \{u_1,\ldots, u_{|F|}\}$ using the rooted ordering of $F$, that is, all roots in the components of $F$ have lowest index, and the remainder are enumerated in ascending order in terms of increasing distance from a root. Since in $T \setminus S$ we have exactly $\zeta m$ copies of each vertex in $F$, then we can denote  the set of vertices in $T \setminus S$ by 
$$V(F \times \zeta m) = \{u_{\ell}^t : \ell \in [|F|], t\in [\zeta m]\}.$$

Let $\var : = \zeta \gamma^{-1}$. We partition $V(F \times \zeta m)$ into $\var|F|$ disjoint vertex classes, each of which will correspond to a vertex in $T_{\text{aux}}$. For this purpose, for each $\ell \in [|F|] $ and $r\in [\var]$, define
\begin{equation*}
    U_\ell^r \coloneqq \left\{u_{\ell}^t : t \in ((r-1)d,rd ]\right\}.
\end{equation*}
In particular,  for all $\ell \in [|F|]$ we have $|U^r_\ell|=d$ when $r\in [\var-1]$ and $|U^{\var}_\ell|\leq d$.
Let us construct an auxiliary graph $F_{\text{aux}}$ on these $U_\ell^r$ sets that is isomorphic to $F \times \var$. We take $F_{\text{aux}}$ to be the union of $\var$ vertex-disjoint copies of $F$ where for each $r \in [\var]$, the vertices in the $r$th copy of $F$ are given by $\{U_\ell^r : \ell \in [|F|]\}$. More explicitly, we have
\begin{equation*}
V\left(F_{\text{aux}}\right) = \{U_\ell^r : \ell \in [|F|], r \in [\var]\} \quad \text{and} \quad E\left(F_{\text{aux}}\right) = \{U_i^rU_j^r : r\in  [\var],  u_iu_j \in E(F), i,j \in [|F|]\}.
\end{equation*}   
Let $T_{\text{aux}}$ be the tree obtained by adding a new vertex $v$ to $F_{\text{aux}}$, and adding an edge between $v$ and the root vertex in every component of $F_{\text{aux}}$. We can write this explicitly as
\begin{equation*}
    V(T_{\text{aux}}) = V(F_{\text{aux}}) \cup \{v\} \quad \text{and} \quad E(T_{\text{aux}}) = E(F_{\text{aux}}) \cup \{vU_i^r :  r\in [\var], \ u_i \text{ is a root in a component of $F$}\}.
\end{equation*}
    Note that since in $T$ each component of $F \times \zeta m$ had at most one edge going into $S$ by assumption, then we do not form any cycles nor multi-edges in $T_{\text{aux}}$, so $T_{\text{aux}}$ is indeed a tree.  Also, $|F| \leq \frac{m}{\zeta m} = \zeta^{-1}$ so in particular every component of $F$ has size at most $\zeta^{-1}$. We know that $|F|\leq \zeta^{-1}$ and so $|T_{\text{aux}}| \leq \gamma^{-1} +1$. On the other hand, $|\taux| > h|F| \geq \zeta \gamma^{-1}\cdot \frac{1-\lambda}{\zeta} \geq \gamma^{-1}/2 \gg \zeta^{-1}$.    
Since all components of $T_{\text{aux}} \setminus\{v\}$ have size at most $\zeta^{-1}$, then we can apply \Cref{lem:onesmallcompsv2} to $T_{\text{aux}}$ with $\eps/2$ playing the role of $\eps$ to obtain a rainbow embedding $\phi :V(T_{\text{aux}}) \rightarrow \{0, 1, 2, \ldots, (1
+\varepsilon/2)\gamma^{-1}\}$ in $K_{[(1
+\varepsilon/2)\gamma ^{-1}]\cup\{0\}}$ where $\phi(v) = 0$ and we avoid colour $1$. 
We will use this rainbow embedding $\phi$ of $T_{\text{aux}}$ to construct a rainbow embedding $\psi$ of $T$ in $K_{[(1+\varepsilon)m]}$. For convenience, let $\eta \coloneqq (1+\varepsilon/2)\gamma^{-1}$.

We now look only at the labels of $V(F\times \var)$ given by $\phi$, and note that these all belong to $[\eta]$.
Let $\Tilde{m} \coloneqq (\eta +1)(d+3|S|)$ and consider a partition of $[\Tilde{m}]$ into $\eta+ 1$ disjoint intervals, each of length $d+3|S|$.  We denote this by $[\Tilde{m}] = I_0 \cup I_1 \cup \ldots \cup I_{\eta}$ such that for each $i \in \{0,1,\ldots,\eta\}$, 
    $$I_i \coloneqq \left\{i(d+3|S|) +1, \ldots, (i+1)(d+3|S|) \right\}.$$ 

Consider the function $g: V(T) \rightarrow \{I_0, I_1,\ldots,I_{\eta}\}$ defined for each $w \in V(T)$ as follows:
If $w \in S$ then let $g(w) = I_0$, otherwise $w \in V(F\times \zeta n)$ and so there exists a unique pair $(\ell,t) \in [|F|] \times [\zeta m]$ such that $w = u^t_{\ell}$. In this case let $g(w) = I_{\phi(U^r_{\ell})}$ for the unique $r$ with $t \in ((r-1)\gamma m, r\gamma m]$ (or equivalently, for the unique $r$ with $w\in U_{\ell}^r$). We will use $g$ to show that there exists a rainbow embedding $\psi$ of $T$ where every vertex $w \in V(T)$ is assigned a label from the interval $g(w)$. We embed $V(T)$ into $K_{[\Tilde{m}]}$ by embedding all vertices in these vertex sets in the following order:
\begin{equation}\label{eqn:ordering}
    S, U_1^1,U_1^2,\ldots, U_1^{\var},U_2^1,\ldots, U_2^{\var},\ldots,U_{|F|}^1, \ldots,U_{|F|}^{\var}.
\end{equation}
We will ensure the colours used on edges contained in $T[S]$ belong in the interval $C_S \coloneqq [3|S|]$, and that all remaining edges outside of $T[S]$ receive colours chosen from some other disjoint intervals. For this purpose, for every pair $i,j~\in~[
    \eta]~\cup~\{0\}$ with $j>i$, define the colour set $$C_{ij} \coloneqq \left\{(j-i)(d+3|S|)-\left\lfloor\frac{d+3|S|}{2}\right\rfloor, \ldots, (j-i)(d+3|S|)+\left\lfloor\frac{d+3|S|}{2}\right\rfloor\right\}.
    $$

\textbf{Embedding vertices in $\boldsymbol{S}$.} We enumerate the vertices of $S$ as $w_1,\ldots,w_{|S|}$ such that each of these has at most one neighbour amongst the lower indexed vertices, possible since $T[S] \subseteq T$ is $1$-degenerate. Each vertex $w_i \in S$ satisfies $g(w_i) = I_0$, and we want to embed 
$S$ into a subinterval $I_0' \subset I_0$, defined by 
$$I_0' = \left\{\left\lceil\frac{d-3|S|}{2}\right\rceil, \ldots, \left\lceil\frac{d+3|S|}{2}\right\rceil \right\},$$
and such that the resulting embedding $\psi$ is rainbow.
We do this greedily. Suppose we have labelled the first $j$ vertices in this ordering for some $ 0 \leq j < |S|$ and we want to assign a label to $w_{j+1}$. There are $j <|S|$ labels from $I_0'$ unavailable due to previously embedded vertices in $S$. Since at each step, there is at most one new edge having both of its endvertices already labelled, then we have also used at most $j$ colours on these edges. Each colour used causes at most two labels from $I_0'$ to become unavailable by considering the absolute difference, in total restricting $2j < 2|S|$ colours. Since $|I_0'| \geq 3|S|$ then there are suitable labels remaining, and we select $\psi(w_j)$ to be the minimal label from this set of choices. Note that $I_0'$ is an interval of length $3|S|$, and so all colours used in this process for edges in $T[S]$ belong in $C_S = [3|S|]$. Furthermore under $\psi$, the forest $T[S]$ is rainbow.
    
\textbf{Embedding root vertices.} We proceed by embedding the vertices which are roots in some component of $F \times \zeta m$, noting that these sets came first (after $S$) in the ordering \cref{eqn:ordering}. Consider a vertex set $U_\ell^r$ where all vertices in this set are copies of $u_\ell$ for some $\ell \in [|F|]$ such that $u_{\ell}$ is a root. 
Recall that we are assuming every copy of a root vertex has exactly one neighbour in $S$ in $T$, and that $u_\ell$ is a neighbour of $v$ in $T_{\text{aux}}$. Choose $j\coloneqq \phi(U_\ell^r)$. We will embed $U_\ell^r$ into a subinterval $I_j'\subset I_j$, adding only edges with colour in $C_{0j}$. We define
    $$
    I_j' \coloneqq  \left\{j(d+3|S|) +1, \ldots, (j+1)(d+3|S|) - 3|S|\right\}.
    $$
Let $d'=|U^r_\ell|$ and recall that $d'\leq d$. Just for this step, consider a new enumeration on the vertices in $U_\ell^r$ by $v_1,v_2,\ldots,v_{d'}$, chosen such that, for the unique neighbour $s_i \in N_T(v_i)\cap S$, we have
\begin{equation*}
    \psi(s_1)\geq \psi(s_2)\geq \ldots \geq \psi(s_{d'}).
\end{equation*}
Extend the embedding $\psi$ so that the vertices of $U_\ell^r$ are embedded into $I_j'$ using the ordering $v_1,\ldots,v_{d'}$, so that $v_1$ receives the smallest label in $I_j'$, $v_2$ receives the second smallest label in $I_j'$, and so on. This is possible because $d' \leq d = |I'_j|$. Let $i,k\in [\gamma m]$ be such that $k>i$. Then $\psi(v_i) - \psi(s_i) \leq \psi(v_i) - \psi(s_k) < \psi(v_k) - \psi(s_k).$ So, the colour of the edge $v_is_i$ is strictly smaller than the colour of the edge $v_ks_k$, under the embedding $\psi$. Since this holds for all such $i,k$, then any pair of edges added in the process of embedding $U_\ell^r$ have different colours. See \cref{fig:intervals_semi} for an example.

We also wish to verify that these new edges have colours in $C_{0j}$.
It suffices to check the colour of an edge $uv$ where $u\in I_0'$ and $v\in I_j'$. We have $v-u \leq (j+1)(d+3|S|) - 3|S| - \left\lceil\frac{d-3|S|}{2}\right\rceil = j(d+3|S|)+\left\lfloor\frac{d+3|S|}{2}\right\rfloor$, and $v-u\geq j(d+3|S|)+1 -  \left\lceil\frac{d+3|S|}{2}\right\rceil = j(d+3|S|) -\left\lfloor\frac{d+3|S|}{2}\right\rfloor.$ So $v-u \in C_{0j}$, as desired. This concludes our embedding process for vertices which have neighbours in $S$.

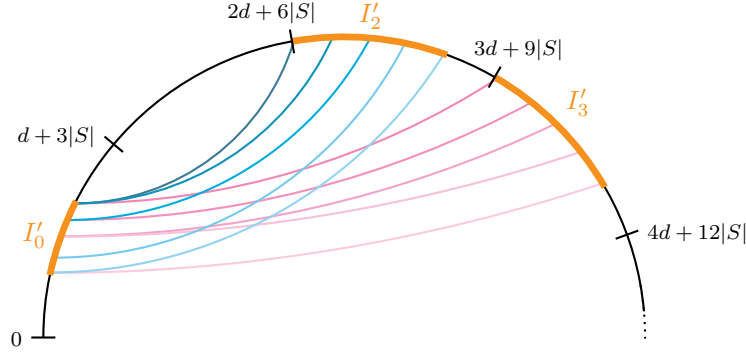
\begin{figure}
    \centering
    \begin{tikzpicture}[scale= 0.8]
    \draw[black, thick] (-5,-2) arc (180:5:5);
    \draw[black, thick,dotted] ({-5*cos(170)}, {5*sin(170) -2}) arc (10:-0.2:5);
    
                \draw[magenta!60!white, thick] ({-5*cos(26.5)}, {5*sin(26.5) -2}) arc (90:123:-12.9);
                \draw[magenta!55!white, thick] ({-5*cos(23)}, {5*sin(23) -2}) arc (90:118.5:-16.2);
                \draw[magenta!45!white, thick] ({-5*cos(19.7)}, {5*sin(19.7) -2}) arc (90:115.6:-19);
                \draw[magenta!30!white, thick] ({-5*cos(19.6)}, {5*sin(19.6) -2}) arc (90:108.6:-27.2);
                \draw[magenta!25!white, thick] ({-5*cos(12.4)}, {5*sin(12.4) -2}) arc (90:108.5:-29);
                
                \draw[cyan!50!black, thick] ({-5*cos(26.5)}, {5*sin(26.5) -2}) arc (90:164:-3.8);
                \draw[cyan!70!black, thick] ({-5*cos(26.50)}, {5*sin(26.5) -2}) arc (90:155.5:-4.7);
                \draw[cyan!90!black!90!white, thick] ({-5*cos(23)}, {5*sin(23) -2}) arc (90:152:-5.7);
                \draw[cyan!90!black!50!white, thick] ({-5*cos(15.4)}, {5*sin(15.4) -2}) arc (90:153:-6.55);
                \draw[cyan!90!black!40!white, thick] ({-5*cos(12.5)}, {5*sin(12.5) -2}) arc (90:148.5:-7.7);
            \draw[BurntOrange, line width=0.9mm] ({-5*cos(12)}, {5*sin(12) -2}) arc (-12:-27:-5); 
                 \node[BurntOrange, anchor= east] at ({-5*cos(20) -.1}, {5*sin(20) -2}) {$I_0'$};
            \draw[BurntOrange, line width=0.9mm] ({-5*cos(80)}, {5*sin(80) -2}) arc (100:70:5);
                \node[BurntOrange,anchor=south] at ({-5*cos(95)}, {5*sin(95) -2}) {$I_2'$};
            \draw[BurntOrange,line width=0.9mm] ({-5*cos(120)}, {5*sin(120) -2}) arc (60:30:5);
                \node[BurntOrange,anchor=south west] at ({-5*cos(135)}, {5*sin(135) -2}) {$I_3'$};
    \draw[black,thick] (-5.2,-2) -- (-4.8,-2);
    \draw[black,thick] ({-5.2*cos(40)}, {5.2*sin(40) -2}) -- ({-4.8*cos(40)}, {4.8*sin(40) -2});
    \draw[black,thick] ({-5.2*cos(80)}, {5.2*sin(80) -2}) -- ({-4.8*cos(80)}, {4.8*sin(80) -2});
    \draw[black,thick] ({-5.2*cos(120)}, {5.2*sin(120) -2}) -- ({-4.8*cos(120)}, {4.8*sin(120) -2});
    \draw[black,thick] ({-5.2*cos(160)}, {5.2*sin(160) -2}) -- ({-4.8*cos(160)}, {4.8*sin(160) -2});
            \node[anchor=east] at (-5.2,-2) {\textcolor{white}{00}\footnotesize{$0$}};
        \node[anchor=east] at ({-5.2*cos(40)}, {5.2*sin(40) -2}) {\footnotesize{$d+3|S|$}};
        \node[anchor=south] at ({-5.2*cos(80)-0.3}, {5.2*sin(80) -2.05}) {\footnotesize{$2d+6|S|$}};
        \node[anchor=south] at ({-5.2*cos(120) +0.3}, {5.2*sin(120) -2.05}) {\footnotesize{$3d+9|S|$}};
         \node[anchor=west] at ({-5.2*cos(160)}, {5.2*sin(160) -2.05}) {\footnotesize{$4d+12|S|$}};
    \end{tikzpicture}
        \caption{Intervals $I_0,\ldots,I_3$ depicted around the semi-circle, with subintervals $I_0'$, $I_2'$ and $I_3'$ highlighted in orange. Vertices in $S$ are embedded into $I_0'$. In this example, $v$ is adjacent to the vertices labelled by $2$ and by $3$ in $T_{\text{aux}}$, and we embed the root vertices corresponding to these vertex sets into $I_2'$ and $I_3'$ respectively. Blue edges have a colour in $C_{02}$, and lighter blue edges have larger colours than darker blue edges. The same holds for pink edges having colours in $C_{03}$.}
    \label{fig:intervals_semi}
\end{figure}

\textbf{Embedding non-root vertices.} In order to embed all remaining vertices, we will require some rainbow matchings. For all $i,j\in [\eta]$ with $j>i$, note that $\tilde{m} \geq (j+1)(d+3|S|)$ and we chose $d$ such that $d+3|S|$ is odd, so applying \Cref{prop:matchingintervals} with
    $ \tilde{m}$ and $d+3|S|$ playing the roles of 
    $n$ and $\ell$ respectively, there exists a rainbow perfect matching in $K_{[\tilde{m}]}[I_i,I_j,C_{ij}]$. Denote this matching by $M_{ij}$. Since these exist for each pair $i = \phi(U_{k}^r),j = \phi(U_{\ell}^r)$ with $U_k^rU_\ell^r \in E(T_{\text{aux}})$, we can consider a collection of such matchings to uniquely define the labels in our embedding of $T$. 
    
Now consider a vertex set $U_\ell^r$ which does not contain vertices which have neighbours in $S$, and assume we have defined the embedding $\psi$ for all previous vertex sets from \cref{eqn:ordering}. By our chosen ordering, $U_\ell^r$ has exactly one neighbour in $T_{\text{aux}}$ amongst earlier vertex sets from the ordering \cref{eqn:ordering}, say $U_{k}^{r}$. By construction we know that $\psi(U_{k}^{r}) \subseteq I_{\phi(U_{k}^{r})}$, and we will embed $U_\ell^r$ into $I_{\phi(U_{\ell}^{r})}$ using the corresponding rainbow perfect matching as follows. 
Let $i =\phi(U_{k}^{r})$ and $j = \phi(U_{\ell}^{r})$. 
Since every vertex $z\in U_{\ell}^r$ has exactly one neighbour $y \in U_{k}^{r}$, and there is exactly one edge $e$ in $M_{ij}$ that contains $\psi(y)$, then we can use this edge to uniquely determine $\psi(z)$ by choosing $\psi(z)$ to be the vertex in $e\setminus \{\psi(y)\}$. Note by definition of $M_{ij}$ that this ensures that $ \psi(z)\in I_j$, that distinct vertices in $U_{\ell}^r$ are assigned distinct labels, and that colours of all edges added between $U_{\ell}^r$ and $U_{k}^{r}$ are distinct and belong to $C_{ij}$. This completes our embedding $\psi$ of $T$.

It remains to check that what we have embedded altogether is rainbow and that we have not used too many labels. 
    
\begin{claim}\label{claim:C_ijdisjoint}
    Let $i,j,i',j' \in [\eta] \cup \{0\}$ be such that $|j-i| \neq |j'-i'|$. Then $C_{ij} \cap C_{i'j'} = \emptyset$.
    \end{claim}

    \begin{proofclaim}
 Without loss of generality assume $j > i$ and $j' > i'$ and $j-i < j'-i'$. In particular since these are integer valued, we know $j-i \leq j' - i' -1$. For any $c \in C_{ij}$, it follows that 
$$ c \leq(j-i)(d+3|S|)+\left\lfloor\frac{d+3|S|}{2}\right\rfloor \leq (j'-i' -1)(d+3|S|) +\left\lfloor\frac{d+3|S|}{2}\right\rfloor =(j' - i')(d+3|S|) - \left\lceil \frac{d+3|S|}{2} \right \rceil$$
so $c \notin C_{i'j'}$. Thus there is no $c$ such that $c \in C_{ij} \cap C_{i'j'}$, as desired.
\end{proofclaim}

\begin{claim}\label{claim:disjoint3J}
If $i,j \in [\eta] \cup \{0\}$ are distinct and $|j-i| \neq 1$, then $C_{ij} \cap C_S = \emptyset$.
\end{claim}
\begin{proofclaim}
Without loss of generality suppose $j>i$, and we can assume $j-i \geq 2$ since $i$ and $j$ take integer values, and $j-i \neq 1$. For any $c \in C_{ij}$, we have $c \geq 2(d+3|S|) - \left\lfloor\frac{d+3|S|}{2}\right\rfloor > 3|S|$. Since $C_S = [3|S|-1]$, then $c \notin C_S$ and the statement holds.
\end{proofclaim}

Our labelling of $\taux$ is rainbow so there are no $i,j,i',j' \in [\eta]\cup \{0\}$ and edges $U_k^rU_{\ell}^r,  U_{k'}^s U_{\ell'}^s \in E(T_{\text{aux}})$ for which $i = \phi(U_{k}^r)$, $j = \phi(U_{\ell}^r)$, $i' = \phi(U_{k'}^r)$, $j' = \phi(U_{\ell'}^r)$ and $|j-i|=|\ell - k|$. Recall also that this embedding of $\taux$ avoided colour $1$, and so similarly there are no such $i$ and $j$ with $|j-i|=1$. 
Whilst embedding $S$, we used colour set $C_S$. Whilst embedding root vertices, we used colour sets of the form $C_{0j}$ where $j = \phi(U_{\ell}^r)$ and $vU_{\ell}^r\in E(\taux)$ for some $\ell \in [|F|]$ and $r\in [\var]$. Whilst embedding all remaining vertices, we used colour sets of the form $C_{ij}$ where $i=\phi(U_{k}^r)$, $j = \phi(U_{\ell}^r)$ and $U_{k}^rU_{\ell}^r\in E(\taux)$ for some $k,\ell \in [|F|]$ and $r\in [\var]$. Thus together Claims \ref{claim:C_ijdisjoint} and \ref{claim:disjoint3J} tell us that the family $\mathcal{C}$ of all colour sets considered are pairwise disjoint. By construction of our labelling of $T$, we use each colour set $C\in \mathcal{C}$ at most once, and under $\psi$, the edges with a colour in $C$ are rainbow. Altogether, we deduce that the 
resulting embedding of $T$ is in fact rainbow, as desired.

Finally let us count the total the number of labels used. 
We know that $d\leq \gamma m +2$ and $|S|\leq \lambda m$. So, using the hierarchy \cref{hierarchyTnotJ} we have
\begin{align*}
    \tilde{m} = (\eta+1)(d+3|S|) &\leq ((1+\varepsilon/2)\gamma^{-1}+1)(3\lambda m+\gamma m +2)\\
    &\leq \left(1+\varepsilon/2+\gamma +((1+\varepsilon/2)\gamma^{-1}+1)(3\lambda +2/m)\right)m\\
    & \leq (1+\varepsilon)m,
\end{align*}
as desired.
\end{proof}

\section[Proof of the main lemma]{Proof of \cref{maintheorem}}\label{subsection:PROOF}

We can now finally collate what we know to prove \cref{maintheorem}, 
following the proof strategy given in~\cref{strategy}. As discussed there, we need to be careful with how we choose our parameters in order to ensure that $|S_{\text{high}}|$ is small and all waste vertices in $W$ have low degree. So let us add in a brief sketch of how we may select these.

At the beginning, we consider many possible choices for a parameter $\Delta$, chosen so that $\Delta_0 \ll \Delta_1 \ll \ldots \ll \Delta_{4\varepsilon^{-1}} \ll n$ and for a given $n$-vertex tree $T$, there exist two consecutive $\Delta_i, \Delta_{i+1}$ satisfying the following property: there are at most $\varepsilon n/2$ edges in $T$ which have an endvertex amongst the set of vertices with degree in the interval $[\Delta_i, \Delta_{i+1})$. Call this `special' set of vertices $\tilde{V}$, and choose $S_{\text{high}}$ to be the set of vertices with degree at least $\Delta_{i+1}$. So, $|S_{\text{high}}| \leq 2n/\Delta_{i+1}$.  Furthermore, the waste vertices in $W$ either have degree strictly less than $\Delta_i$, or, they belong to $\Tilde{V}$. In the former case we can show that the number of edges touching this set is at most $\Delta_i|W| \leq \varepsilon n /2$, and in the latter case $\tilde{V}$ was chosen specially so that there are also at most $\varepsilon n /2$ touching this set. Therefore, when we embed $W$ arbitrarily at the end, the number of edges that do not receive a distinct colour is at most $\varepsilon n$.

\begin{proof}[Proof of \cref{maintheorem}]
Let $\eps>0$ and let $\Delta_0, \Delta_1, \dots, \Delta_{4\varepsilon^{-1}}$ be a sequence of natural numbers chosen to satisfy 
\begin{equation*}
     \Delta_{4\varepsilon^{-1}}^{-1} \ll \ldots \ll  \Delta_1^{-1} \ll \Delta^{-1}_0 \ll \eps. 
\end{equation*}
Choose $N$ to be sufficiently large with respect to the $\Delta_i$ and let $n>N$.
Let $T$ be an $n$-vertex tree.  For each $i \in \{0,1\ldots, 4\varepsilon^{-1} -1\}$, let $U_i \coloneqq \{v\in V(T): d_T(v) \in [\Delta_i,\Delta_{i+1})\}$ and let $E_i \subseteq E(T)$ be the set of edges containing a vertex in $U_i$. Each edge in $E(T)$ belongs to at most two distinct $E_i$s. Suppose $|E_i| > \frac{\varepsilon n}{2}$ for every $i\in \{0,1\ldots, 4\varepsilon^{-1} -1\}$, then
$$
2|E(T)| \geq \sum_i|E_i|>4\varepsilon^{-1}\left(\frac{\varepsilon n }{2}\right) = 2n,
$$
a contradiction. So there exists an $i\in \{0,1\ldots, 4\varepsilon^{-1} -1\}$ such that $|E_i| \leq \frac{\varepsilon n}{2}$ and for such an $i$, for convenience let us define $\Delta \coloneqq \Delta_i$, $\tilde{\Delta}\coloneqq \Delta_{i+1}$ and $U\coloneqq U_i$, noting for later that at most $\eps n/2$ edges of $T$ contain a vertex in $U$, and that every vertex in $U$ has degree less than $\tilde{\Delta}$ in $T$.

We now additionally choose $\delta,\zeta>0$ such that $\zeta$ is at most the output $\zeta_0$ of \cref{lem:structure4} when applied with $\delta$, such that $\zeta n\in \N$, and satisfying
\begin{equation}\label{eq:hierarchytheorem}
    n^{-1}\ll \Tilde{\Delta}^{-1}\ll \zeta \ll \delta \ll \Delta^{-1} \ll \eps.
\end{equation}
It is easily observed that $\delta \Delta <\eps/2$ and $\delta \tilde{\Delta}>20$. 

Let $S_{\text{high}} \coloneqq \{v\in V(T):d_T(v)\geq \tilde{\Delta}\}$ so that $|S_{\text{high}}|\leq 2n/\tilde{\Delta} < \delta n/10$. Since we chose $\zeta$ accordingly, we can apply \cref{lem:structure4} to $T$ with $S_{\text{high}}$ playing the role of $S$ to obtain a vertex set $W \subseteq V(T)\setminus S_{\text{high}}$ and a forest $F$ with rooted trees as components, such that $|W|\leq \delta n$ and $T\setminus (W\cup S_{\text{high}})\cong F \times \zeta n$, and for every component in $F \times \zeta n$, only the root has a neighbour in $S_{\text{high}}$. Let $\tilde{T} = T \setminus W$, so that $\tilde{T}$ is a forest of the form  $S_{\text{high}} \cup (F\times \zeta n)$, and we know that only the roots in $F \times \zeta n$ have a neighbour in $S_{\text{high}}$. Note that $ \tilde{n} \coloneqq |\tilde{T}| = |T|-|W| \geq (1-\delta)n$. Choose $\tilde{\zeta}\coloneqq \zeta n /\tilde{n} $, so we have $\tilde{\zeta} \in [\zeta,\frac{\zeta}{1-\delta}]$ and $\tilde{\zeta} \ll \varepsilon$, and $\tilde{T} \setminus S_{\text{high}} \cong F\times \zeta n = F\times \Tilde{\zeta}\tilde{n}$. Observe that $|S_{\text{high}}|\leq \frac{2n}{\tilde{\Delta}}\leq \frac{2\tilde{n}}{\tilde{\Delta}(1-\delta)}$. From~\cref{eq:hierarchytheorem} we can assume 
$ \frac{2}{\tilde{\Delta}(1-\delta)} \ll \zeta \leq  \Tilde{\zeta}$. Applying \cref{lem:TminusJrooted_comps} with  $\tilde{T}$, $\tilde{n}$, $S_{\text{high}}$,  $\frac{2}{\tilde{\Delta}(1-\delta)}$ and $\tilde{\zeta}$ playing the roles of $T$, $m$, $S$, $\lambda$ and $\zeta$ respectively, we find a rainbow copy of $\tilde{T}$ in $K_{[(1+\varepsilon)\tilde{n}]} \subseteq K_{[(1+\varepsilon)n]}$.

All that is left to embed of $T$ is the set $W$. First, let us embed $W \setminus U.$ Since $W \setminus U \subseteq V(T) \setminus (S_{\text{high}}\cup U)$, then every vertex in this set has degree at most $\Delta$. So, the number of edges containing a vertex in $W\setminus U$ is at most $|W \setminus U|\Delta \leq |W|\Delta \leq \delta \Delta n < \varepsilon n /2$. Therefore, we can arbitrarily embed vertices in $W$ into $K_{[(1+\eps)n]}$, ensuring only unused labels of $[(1+\varepsilon)n]$ are used, causing at most $\varepsilon n /2$ colours to repeat. Finally we must embed $W \cap U$.  
We can similarly give these vertices an arbitrary label from what remains of $[(1+\varepsilon)n]$. This adds at most $\varepsilon n/2$ edges, each of which may cause a colour to be repeated. Altogether we find a copy of $T$ in $K_{[(1+\varepsilon)n]}$ such that all but at most $\varepsilon n/2 + \varepsilon n/2 = \varepsilon n$ edges have distinct colours. This proves the lemma.
\end{proof}

\section*{Acknowledgements}

	We would like to thank the two anonymous referees for their helpful comments.

\setlength{\baselineskip}{13pt}
\providecommand{\noopsort}[1]{}


\begin{thebibliography}{10}

\bibitem{adamaszek2020almost}
A.~Adamaszek, P.~Allen, C.~Grosu, and J.~Hladk{\`y}.
\newblock Almost all trees are almost graceful.
\newblock {\em Random Structures \& Algorithms}, 56(4):948--987, 2020.

\bibitem{alon_hypergaphs}
N.~Alon and R.~Yuster.
\newblock On a hypergraph matching problem.
\newblock {\em Graphs and Combinatroics}, 21(4):377--384, 2005.

\bibitem{VanBussel2002}
F.~{\noopsort{Bussel}{Van Bussel}}.
\newblock Relaxed graceful labellings of trees.
\newblock {\em Electronic Journal of Combinatorics}, 9(1):12, 2002.

\bibitem{dirac1952some}
G.~Dirac.
\newblock Some theorems on abstract graphs.
\newblock {\em Proceedings of the London Mathematical Society}, 3(1):69--81, 1952.

\bibitem{ERDOS1988117}
P.~Erd\H{o}s, P.~Hell, and P.~Winkler.
\newblock Bandwidth versus bandsize.
\newblock {\em Annals of Discrete Mathematics}, 41:117--129, 1988.

\bibitem{Erdos1975antiramsey}
P.~Erd\H{o}s, M.~Simonovits, and V.T. S\'{o}s.
\newblock Anti-{R}amsey theorems.
\newblock {\em Infinite and finite sets}, 2:633--643, 1975.

\bibitem{gallianlabellingsurvey}
J.~Gallian.
\newblock A dynamic survey of graph labeling.
\newblock {\em Electronic Journal of Combinatorics}, 25:9--46, 2022.

\bibitem{GOLOMB197223}
S.~Golomb.
\newblock How to number a graph.
\newblock In {\em Graph Theory and Computing}, pages 23--37. Academic Press, 1972.

\bibitem{graham_sloane1980}
R.~Graham and N.~Sloane.
\newblock On additive bases and harmonious graphs.
\newblock {\em SIAM Journal on Algebraic and Discrete Methods}, 1, 12 1980.

\bibitem{JansonSvante2000Rg/S}
S.~Janson, T.~Łuczak, and A.~Rucinski.
\newblock {\em Random Graphs}.
\newblock Wiley New York, 2000.

\bibitem{keevash2025ringel}
P.~Keevash and K.~Staden.
\newblock Ringel’s tree packing conjecture in quasirandom graphs.
\newblock {\em Journal of the European Mathematical Society (EMS Publishing)}, 27(5), 2025.

\bibitem{Mantel}
W.~Mantel.
\newblock Problem 28.
\newblock {\em Wiskundige Opgaven}, 10:60--61, 1907.

\bibitem{mcdiarmid1989method}
C.~McDiarmid.
\newblock On the method of bounded differences.
\newblock {\em Surveys in combinatorics}, 141:148--188, 1989.

\bibitem{Molloy_listcolouring}
M.~Molloy and B.~Reed.
\newblock Near-optimal list colorings.
\newblock {\em Random Structures \& Algorithms}, 17(3-4):376--402, 2000.

\bibitem{montgomery2021ringel}
R.~Montgomery, A.~Pokrovskiy, and B.~Sudakov.
\newblock A proof of {R}ingel’s conjecture.
\newblock {\em Geometric and Functional Analysis}, 31:1--58, 06 2021.

\bibitem{Ringel1963}
G.~Ringel.
\newblock Theory of graphs and its applications.
\newblock In {\em Proceedings of the Symposium Smolenice}, 1963.

\bibitem{rosa1966}
A.~Rosa.
\newblock On certain valuations of the vertices of a graph.
\newblock {\em Theory of Graphs (Internat. Symposium, Rome)}, page 349–355, 1966.

\bibitem{rosa_siran}
A.~Rosa and J.~Širáň.
\newblock Bipartite labelings of trees and the gracesize.
\newblock {\em Journal of Graph Theory}, 19(2):201--215, 1995.

\end{thebibliography}
\end{document}